\newcommand{\ret}{\vspace{.1in}}
\newcommand{\apsi}{{\tilde \psi}}
\newcommand{\aphi}{{\tilde \phi}}
\newcommand{\tstar}{ {\tilde{\star}} }
\newcommand{\tnabla}{ {\widetilde{\nabla}} }
\newcommand{\Lp}[2]{\left\Vert \, #1 \, \right\Vert_{#2}}
\newcommand{\lp}[2]{\Vert \, #1 \, \Vert_{#2}}
\newcommand{\llp}[2]{ {|\!|\!| \, #1 \, |\!|\!|}_{#2} }
\newcommand{\td}{\widetilde}
\newcommand{\sd}{ {{\slash\!\!\! d  }} }
\newcommand{\sxi}{ {{\slash\!\!\! \xi  }} }
\newcommand{\ud}{ {\underline{d}} }
\newcommand{\sF}{ {\, {\slash\!\!\!\! F  }} }
\newcommand{\snabla}{\, {\slash\!\!\!\! \nabla} }
\newcommand{\sDelta}{\, {\slash\!\!\!\! \Delta} }
\newcommand{\sdiv}{ \  {\slash\!\!\!\!\!\!\hbox{div}}\, }
\newcommand{\scurl}{ \ {\slash\!\!\!\!\!\!\hbox{curl}}\, }
\newcommand{\bL}{ {\underline{L}} }
\newcommand{\bchi}{ {\underline{\chi}} }
\newcommand{\step}[2]{\noindent\textbf{Step #1:}(\emph{#2})}
\newcommand{\sstep}[1]{\noindent\textbf{Step #1:} }
\newtheorem{theorem}{Theorem}
\newtheorem{lemma}[theorem]{Lemma}
\newtheorem{proposition}[theorem]{Proposition}
\newtheorem{deff}[theorem]{Definition}
\newtheorem{remark}[theorem]{Remark}
\newcommand{\R}{{\mathbb R}}
\newcommand{\diag}{{\text {diag}}}
\newcommand{\la}{\langle}
\newcommand{\ra}{\rangle}
\renewcommand{\S}{{\mathbb S}}
\begin{document}

\title[Local energy decay for Maxwell]
{Local energy decay for Maxwell fields part I: 
Spherically symmetric black-hole backgrounds}

\author{Jacob\ Sterbenz}
\address{Department of Mathematics, University of California San
  Diego, La Jolla, CA 92093-0112} \email{jsterben@math.ucsd.edu}

\author{Daniel\ Tataru} \address{Department of Mathematics, The
  University of California at Berkeley, Evans Hall, Berkeley, CA
  94720, U.S.A.}  \email{tataru@math.berkeley.edu}

\thanks{The first author was partially supported by the NSF grant
DMS-1001675.
The second author  was partially supported by the NSF grant 
DMS-0801261 and by the Simons Foundation.}

\maketitle 
\tableofcontents

\section{Introduction}

Let $(\mathcal{M},g_{\alpha\beta})$ be a 4 dimensional Lorentzian
manifold. In this paper we study the dispersive properties of
two-forms $F_{\alpha\beta}$ in terms of their divergences when
$(\mathcal{M},g_{\alpha\beta})$ is both stationary and spherically
symmetric, and satisfies additional local and global assumptions which are
natural generalizations of the special properties held by
asymptotically flat solutions for the Einstein vacuum equations
$R_{\alpha\beta}=0$. Specifically, we make the following definition:

\begin{deff}[Spherically symmetric stationary ``black holes'']\label{black_hole_defn}
Let $\mathcal{M}\approx \mathbb{R}^2\times \mathbb{S}^2$ be a spherically symmetric 
Lorentzian manifold with metric:
\begin{equation}
		g \ = \ h_{ab}dx^adx^b + r^2 \delta_{AB}dx^A dx^B \ , 
		\qquad a=0,1, \quad A=2,3, 
		\label{metric}
\end{equation}
where $\delta_{AB}$ is the standard round metric on $\mathbb{S}^2$, and where the 2 dimensional
Lorentzian metric $h_{ab}$ depends only on the $(x^a,x^b)$ variables. 
Denote by $\langle\cdot,\cdot \rangle$ the inner product of $g$.
Then we say that $(\mathcal{M},g)$
is a (generalized) ``spherically symmetric stationary black hole'' if the following conditions hold:
\begin{enumerate}[i)]
		\item (Stationary asymptotic flatness) There exists a spherically symmetric
			time function $t$ defined on all of $\mathcal{M}$, 
			i.e.~ $t=t(x^a)$ and $\la dt, dt \ra<0$,
			such that $(t,r)$ forms a system of coordinates in the $x^A=const.$ plane  and
			one has $h_{ab}=\hbox{diag}(-1,1)+ O(r^{-1})$ as $r\to\infty$. Furthermore,
			in this system of coordinates one has
			$\partial_t h_{ab}\equiv 0$ and $\partial_r^k h_{ab}=O(r^{-1-k})$
			for $k\geqslant 1$.
		\item \label{r_M_assumption}
			(Non-degenerate forward global hyperbolicity) There exists  a value $r_0>0$ such that 
			$r=r_0$ is also space-like, and the sign of $g^{rr}=\la dr, dr \ra$ changes only  
			once in $r\geqslant r_0$. We define the value $r=r_M$ to be the unique hypersurface
			where $g^{rr}=0$.
			In addition we assume $\partial_r g^{rr}$ never vanishes.
		\item (Strictly hyperbolic trapping) There is a one and only one value 
		         $r_\mathcal{T}$ in the region
			where $\la dr , dr \ra>0$ such that the time-like surface 
			$r=r_\mathcal{T}$ is a trapped  set for all null geodesics initially tangent to it. 
			Furthermore, this trapped set is normally hyperbolic in the sense 
			of dynamical systems.\footnote{See  Lemma \ref{exterior_lemma} 
			below for a precise formulation of this condition. } 
\end{enumerate}
\end{deff}

A number of remarks about this definition are in order:

\begin{remark}\label{BH_def_rem}
\begin{enumerate}[a)]
	\item The  time  coordinate of  part i)   above is  regular and should not
	be confused with the (singular) Schwarzschild $t$ coordinate in the standard metric
	$ds^2=-(1-\frac{2M}{r})dt^2 + (1-\frac{2M}{r})^{-1}dr^2+r^2d\omega^2$.
	\item \label{d_t_rem}
	The assumptions i) and ii) above imply that $\partial_t$ (computed in $(t,r)$ coordinates)
	is timelike on $r>r_M$,
	null for $r=r_M$, and space-like on $r<r_M$. This can immediately be seen via Cramer's
	rule which gives $\langle\partial_t,\partial_t\rangle =\det(h)g^{rr}$ where $\det(h)\approx -1$
	because it is a regular $(1+1)$ Lorentzian metric in the $(t,r)$ plane. In particular
	this shows that the phenomena of super-radiance is absent for the case of (general) spherically
	symmetric black holes. 
	\item Since $g^{rr}=0$ at $r=r_M$ this hypersurface is null, and by the previous remark
	$\partial_t|_{r=r_M}$ is the null generator. The assumption $\partial_r g^{rr}\neq 0$ implies
	 this null hypersurface enjoys a red shift effect similar to that of Schwarzschild and 
	non-extremal Reissner-Norstr\"om, and this is one of the main stability mechanisms we
	exploit in our work here (we refer the reader to \cite{DR_sch} and \cite{DR_survey} 
	for further information regarding this issue). 
	\item It turns out that condition iii)
	also implies  there are no  trapped null geodesics in the region
	$r>r_M$ other than those tangent to  $r=r_\mathcal{T}$ (e.g.~no
	null geodesics which oscillate in a band $r_1<r<r_2$). Furthermore, it turns out this
	condition has a simple geometric description which is
	$V'(r_\mathcal{T})=0$ and $V''(r_\mathcal{T})<0$, where we have set
	$V=-r^{-2}\la\partial_t,\partial_t \ra_g $. 
\end{enumerate}
\end{remark}

Now orient $(\mathcal{M},g_{\alpha\beta})$ so that $(dt,dr,dx^2,dx^3)$ is a positive
basis of $T^*(\mathcal{M})$, where $r=const$ is given the outward (i.e.~towards $r\to \infty$)
orientation on each 3-surface $t=const$. Recall that given such an oriented Lorentzian
manifold there is a unique isomorphism $\star:\Lambda^p\to \Lambda^{4-p}$
such that $\la \omega ,  \sigma\ra_g dV_g =\omega\wedge \star\sigma$ where
$dV_g=\sqrt{|g|} dt\wedge dr\wedge dV_{\mathbb{S}^2}$, and $dV_{\mathbb{S}^2}$
is the standard (outward oriented) volume form on $\mathbb{S}^2$.

Let $F_{\alpha\beta}$ be an antisymmetric two-tensor  on $\mathcal{M}$, and   define 
$F^{\star}=\star F$.  We  label its divergences as follows:
\begin{equation}
		\nabla^\beta F^{\star}_{\alpha\beta} \ = \ I_\alpha \ , \qquad\qquad
		\nabla^\beta F_{\alpha\beta} \ = \ J_\alpha \ . \label{dsource_Maxwell}
\end{equation}
We call $I_\alpha$ the \emph{magnetic source} and $J_\alpha$ the \emph{electric source} of $F_{\alpha\beta}$.
On physical grounds one usually sets $I\equiv 0$, but for mathematical purposes we
will not do so here. If both $I\equiv 0$ and 
$J\equiv0$, we say $F_{\alpha\beta}$ is a \emph{free Maxwell field}.

It is well known that the first order system \eqref{dsource_Maxwell}
is hyperbolic, so it makes sense to estimate the values of
$F_{\alpha\beta}$ in the wedge $t\geqslant 0$ and $r\geqslant r_0$ in
terms of the values of $(I,J)$ in $t\geqslant 0$ and $r\geqslant r_0$
and the initial restriction  of components $F_{\alpha\beta}|_{t=0}$. Based on experience
with the scalar wave equation on backgrounds such as
$(\mathcal{M},g_{\alpha\beta})$, see e.g.~\cite{MMTT_sch} and
\cite{TT_LE}, one would expect to prove local energy decay estimates
which are roughly of the form\footnote{In the sequel we will prove a
  scale invariant version which corresponds to $\epsilon=0$. In
  addition one needs a logarithmic weight on the trapped set
  $r=r_\mathcal{T}$. The precise version will be given shortly.}
\begin{equation}
		\lp{r^{-\frac{1}{2}-\epsilon} F}{L^2(dV_g)[0,T]} \ \leqslant \ C_\epsilon\big(
		\Lp{F|_{t=0}}{L^2(dV_{g|_{t=0}})} +
		\lp{r^{\frac{1}{2}+\epsilon}I}{L^2(dV_g)[0,T]} 
		+ \lp{r^{\frac{1}{2}+\epsilon} J}{L^2(dV_g)[0,T]}\big) \ , \label{fake_LE}
\end{equation}
where the components of $F,I,J$ are written with respect to regular
basis such as $(\partial_a,r^{-1}\partial_A)$.  Unfortunately, such a
naive estimate is false due to the presence of finite energy bound
state solutions to \eqref{dsource_Maxwell}.  These arise as follows:

Let $\mathcal{S}\subseteq \mathcal{M}$ be a compact space-like two surface
homotopic to some sphere $t=const,r=const$, and define
the quantities (in terms of pull-backs of $F$):
\begin{equation}
		Q_\mathcal{S} \ = \ \int_\mathcal{S} F|_{\mathcal{S}} \ , \qquad
		Q^{\star}_{\mathcal{S}} \ = \ -\int_\mathcal{S} F^{\star}|_{\mathcal{S}} \ , \label{charges}
\end{equation}
where we give the integrals the outward orientation induced by $dV_g$
by first restricting $t$ and then $r$. We call $Q^{\star}_\mathcal{S}$
and (resp $Q_{\mathcal{S}}$) the \emph{electric} (resp
\emph{magnetic}) charge contained within $\mathcal{S}$. Let
$\mathcal{S}'$ be some other space-like two surface such that there
exists a tube $\Sigma(\mathcal{S}',\mathcal{S})$ with boundary
$\partial \Sigma(\mathcal{S}',\mathcal{S})=\mathcal{S}' \sqcup
\mathcal{S}$ and the positive outward orientation through
$\mathcal{S}'$. Then by Stokes theorem and the properties of the
$\star$ operator (see equation \eqref{dual_dsource_Maxwell} below) one
has:
\begin{equation}
		Q_\mathcal{S'} - Q_\mathcal{S} \ = \ -\int_{\Sigma(\mathcal{S}',\mathcal{S})}(\star I)|_{\Sigma} \ , 
		\qquad Q^{\star}_\mathcal{S'} - Q^{\star}_\mathcal{S} \ = \ 
		- \int_{\Sigma(\mathcal{S}',\mathcal{S})}(\star J)|_{\Sigma} \ . \label{charge_diff_eqs}
\end{equation}
In particular if $F_{\alpha\beta}$ is a free Maxwell field then $Q_\mathcal{S}$ and
$Q^{\star}_\mathcal{S}$ are constants equal to:
\begin{equation}
		Q_\infty \ = \ \lim_{r\to\infty}\int_{S_{t,r}} F_{\hat A \hat B}(t,r) dV_{\mathbb{S}^2}  \ , \qquad
		Q^{\star}_\infty \ = \ \lim_{r\to\infty}\int_{S_{t,r}} F_{tr}(t,r)dV_{\mathbb{S}^2} \ ,
		\label{charge_at_infinity}
\end{equation}
where $S_{t,r}$ are the spheres $t=const,r=const$, and $e_{\hat A},e_{\hat B}$ is
an orthonormal basis of $S_{t,r}$ with respect to the restriction of $g$.\footnote{It is not hard to 
see that under conditions i) and ii)
of Definition \ref{black_hole_defn} these values do not depend on the choice of $t$. Here 
$e_{\hat A},e_{\hat B}$ are bounded coefficient linear combinations of 
$r^{-1}\partial_2,r^{-1}\partial_3$; we will reserve the notation $e_A,e_B$ to denote an orthonormal basis
with respect to the (fixed) round metric $\delta_{AB}$ on $\mathbb{S}^2$, i.e.~a
bounded coefficient linear combination of 
$\partial_2,\partial_3$.} If either of
these values is non-zero it represents a finite energy non-dispersing component of $F_{\alpha\beta}$, 
i.e.~an obstruction to estimate \eqref{fake_LE}.


\subsection{Statement of the main theorem}

The main result of the present paper is that in the class of finite energy solutions
to \eqref{dsource_Maxwell} the (local) charges \eqref{charges} represent the \emph{only}
obstruction to local energy decay. To state this result properly, it is necessary to 
compute the finite energy stationary free Maxwell fields. These turn out to be spanned
by the pair:
\begin{equation}
		\overline{F}^{electric} \ = \ \frac{1}{4\pi r^{2}}\sqrt{|h|}dt\wedge dr  
		 \ , \qquad 
		\overline{F}^{magnetic} \ = \ 
		\frac{1}{4\pi}\sqrt{|\delta|}dx^2\wedge dx^3 \ . \notag
\end{equation}
If $F$ is a general solution to \eqref{dsource_Maxwell}, we project it
dynamically onto the span of $\overline{F}^{electric}$ and
$\overline{F}^{magnetic}$ as follows:
\begin{equation}
		\overline{F} \ = \ Q^{\star}(t,r)\overline{F}^{electric} + Q(t,r)\overline{F}^{magnetic} \ , 
		\label{charge_average}
\end{equation}
where $Q^{\star}(t,r)$ and $Q(t,r)$ denote the integrals
\eqref{charges} taken over the spheres $S_{t,r}$, and can be viewed 
as the radial components of $F$.

To state our main theorem we need one additional bit of notation concerning our 
choice of norms. Let $\mathcal{R}_j=\{(t,r,x^A,x^B)\, \big| \, r\sim 2^{j}\}$ for $j\in\mathbb{Z}$. 
Let $r_0$ be a fixed value of $r$ such that $\la dr, dr\ra <0$ as in Definition \ref{black_hole_defn}.
Then for a value $T>0$ we set:
\begin{equation}
		\lp{F}{LE[0,T]} \ = \ \sup_j \lp{r^{-\frac{1}{2}}  F}{L^2(\mathcal{R}_j)[r_0,\infty)\times [0,T]} \ , \qquad
		\lp{J}{LE^{\star}[0,T]} \ = \ \sum_j \lp{r^\frac{1}{2} J}
		{L^2(\mathcal{R}_j)[r_0,\infty)\times [0,T]} \ , \label{LE_norms}
\end{equation}
where the integrals are taken with respect to the volume form $dV_g$.
The main result of this paper  is  the following:\ret

\begin{theorem}[Local energy decay for Maxwell fields]\label{main_thm}
Let $F$ be a solution to \eqref{dsource_Maxwell}
on a space-time $(\mathcal{M},g)$ which satisfies the axioms of Definition \ref{black_hole_defn},
and define $\overline{F}$ according to 
\eqref{charges} and \eqref{charge_average}. Then one has the uniform bound:
\begin{equation}
		\lp{(w_{ln})^{-1}(F-\overline{F})}{LE[0,T]} \ \lesssim \ \lp{F(0)}{L^2(dV_{g|_{t=0}})} 
		+ \lp{w_{ln}(I,J)}{LE^*[0,T]}
		\ , \label{main_thm_est}
\end{equation}
where $w_{ln}(r)=(1+\big|\ln|r-r_\mathcal{T}|\big|)/(1+|\ln(r)|)$. 
Here the components of $F,I,J$ are taken with respect to 
any regular coordinate system $(t,r)$ as in Definition
 \ref{black_hole_defn}, and any
normal frame $e_{\hat A},e_{\hat B}$ on the spheres $S_{t,r}$.
\end{theorem}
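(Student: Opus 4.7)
The plan is to use spherical symmetry to reduce the Maxwell system \eqref{dsource_Maxwell} to a pair of scalar wave equations of Regge--Wheeler type (one ``electric,'' one ``magnetic'') after subtracting the charge part $\overline{F}$, and then to invoke the known scalar local energy decay of \cite{MMTT_sch,TT_LE} for each. This reduction is natural because the axioms i)--iii) of Definition \ref{black_hole_defn} are exactly what the scalar theory requires, and because the stationary $\ell=0$ obstruction in the scalar theory corresponds precisely to the conserved charges \eqref{charges} carried by $\overline{F}$.

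First I would decompose $F$ with respect to $T\mathcal{M}=T\mathbb{R}^2\oplus T\mathbb{S}^2$ into pieces $F_{ab}$, $F_{aA}$, $F_{AB}$, with $a,b\in\{t,r\}$. By $2$-dimensional Hodge theory on $S^2$, the mixed piece $F_{aA}$ admits on $\ell\geq 1$ a unique decomposition into an ``electric'' (gradient) part $\ang\phi^e_a$ and a ``magnetic'' (co-gradient) part generated from a second scalar $\phi^m_a$ via the sphere Hodge star. Meanwhile, the spherically symmetric ($\ell=0$) parts of $F_{tr}$ and $F_{AB}$ are, by construction of \eqref{charges}, exactly $Q^{\star}(t,r)$ and $Q(t,r)$, which are the entire content of $\overline{F}$. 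Thus $\widetilde{F}:=F-\overline{F}$ is supported on $\ell\geq 1$ in all of its natural components, where the angular Hodge operators are boundedly invertible.

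Second, I would derive from \eqref{dsource_Maxwell} a pair of decoupled scalar equations for $\phi^{e/m}$ on $\ell\geq 1$ of Regge--Wheeler form
\[
\Box_g\phi^{e/m}\;-\;\tfrac{2}{r^2}\,\phi^{e/m}\;=\;\mathcal{S}^{e/m}(I,J),
\]
where $\mathcal{S}^{e/m}$ is obtained from $I,J$ by the (bounded) inverse angular Hodge on $\ell\geq 1$, and the $-2/r^2$ is the standard Maxwell centrifugal potential; combined with the angular Laplacian $-r^{-2}\ang^2$ it yields an effective potential $(\ell(\ell+1)-2)r^{-2}\geq 0$ on $\ell\geq 1$. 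The remaining $\ell\geq 1$ parts of $F_{tr}$ and $F_{AB}$ are then recovered algebraically from $\phi^{e/m}$ and their $(t,r)$-derivatives via first-order angular elliptic identities (the divergence-free/curl conditions implicit in \eqref{dsource_Maxwell}). Applying the scalar local energy decay of \cite{MMTT_sch,TT_LE} on $(\mathcal{M},g)$ to each of $\phi^{e/m}$---whose hypotheses (red-shift at $r_M$, asymptotic flatness as $r\to\infty$, strictly/normally hyperbolic trapping at $r_\mathcal{T}$) follow verbatim from axioms i)--iii)---produces the desired bound with the logarithmic weight $w_{ln}$ at trapping; converting back to $\widetilde{F}$, which lives at the level of one derivative of $\phi^{e/m}$, then gives \eqref{main_thm_est}.

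The main obstacle is the scalar reduction itself: one must arrange the transformation from $(F,I,J)$ to $(\phi^{e/m},\mathcal{S}^{e/m})$ so that it is bounded in exactly the weighted $LE$ and $LE^*$ spaces of \eqref{main_thm_est}, respects regularity at $r=r_M$ (where $\partial_t$ is null, so $\phi$-derivatives must be re-expressed in a regular null frame $L,\underline{L}$ before being compared to components of $F$ in a regular frame), and preserves the precise $r$-weights as $r\to\infty$. Secondary subtleties include the borderline $\ell=1$ modes, where the centrifugal gain vanishes and the Hodge inversion has its smallest constants (both compatible with, but not aided by, the scalar estimate), and the precise definition of $\overline{F}$ for merely finite-energy $F$, which one makes sense of on each sphere $S_{t,r}$ via trace and integration before verifying that $F-\overline{F}$ inherits the finite-energy data required to close the scalar argument.
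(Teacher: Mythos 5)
Your overall strategy---subtract the charge part $\overline{F}$, reduce to two spin-zero scalars via angular Hodge theory on the spheres, prove a Regge--Wheeler-type scalar local energy decay estimate, and recover the remaining components of $F$ by elliptic div-curl identities---is indeed the skeleton of the paper's argument. The gap is in the step ``apply the scalar local energy decay of \cite{MMTT_sch,TT_LE}'' with a source ``obtained from $I,J$ by the bounded inverse angular Hodge.'' When one derives the second order master equation from \eqref{dsource_Maxwell}, the right-hand side unavoidably contains a $(t,r)$-divergence of the current: in the paper's normalization, $\Box^0\phi = -\nabla^a(r^2\star_h I)_a - \star_\delta \sd J$ (see \eqref{Box_0_RHS}), and similarly for $\phi^\star$. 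Angular Hodge inversion cannot remove the $(t,r)$-derivative falling on $I$, so feeding this into an inhomogeneous scalar LE estimate in the usual dual norm would require $w_{ln}\nabla_{t,r}(I,J)$ in $LE^*$, i.e. one more derivative of the sources than \eqref{main_thm_est} allows. Overcoming exactly this is the paper's main technical contribution: Theorem \ref{main_LE_thm1} keeps the source in divergence form $\nabla^a G_a + H$ together with the constraint $\star_h \ud G = K$ (coming from the continuity equation), and the elliptic ``Coulomb-gauge'' construction of Proposition \ref{p:ell} splits off the part of $\phi$ that depends elliptically on $G$, so that only $r^{-1}G$, $(-\sDelta)^{-\frac12}K$, $(-\sDelta)^{-\frac12}H$ are needed on the right. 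Nothing in your proposal plays this role. (If instead you intend $\phi^{e/m}$ as genuine potentials one derivative below $F$, the source enters undifferentiated, but then the estimate has $\dot H^1$ rather than sharp $L^2$ scaling and loses precisely the low-frequency information tied to the ``dynamic charges'' generated by $(I,J)$, which the paper's $(-\sDelta)^{-\frac12}$-weighted formulation is designed to retain.)

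A second, related gap is that the black box you invoke does not exist in the needed generality: \cite{MMTT_sch} and \cite{TT_LE} treat the scalar operator $\Box_g$ on Schwarzschild and Kerr, not the spin-zero operator $\Box^0=\Box_h+r^{-2}\sDelta$ on the general class of metrics of Definition \ref{black_hole_defn}, and the two operators differ genuinely at low angular frequency (for $\Box^0$ one cannot bound $r^{-1}\phi$ on spherically symmetric data, in contrast to $\Box_g$; this is again the charge phenomenon). The paper therefore reproves local energy decay for $\Box^0$ from scratch (Theorem \ref{Main_LE_prop}), combining red-shift multipliers near $r=r_M$, a degenerate energy/$\partial_t$ multiplier estimate, and Regge--Wheeler-coordinate multipliers in the exterior glued across the horizon region, importing from \cite{MMTT_sch} only the purely local estimate at the trapped set. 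The regular-frame issue at $r=r_M$ that you correctly flag as the ``main obstacle'' is resolved by exactly this red-shift/Regge--Wheeler gluing; it has to be carried out rather than cited.
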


\ret

\subsection{The charge equations. A preliminary reduction of the main theorem}

In this section we derive some further equations for the charges \eqref{charges}. 
This will allow us to give an alternate version of Theorem \ref{main_thm}, which is what we 
will actually prove in the sequel.

Recall that on a 4 dimensional Lorentian manifold with oriented
volume form ${dV_g}$, the Hodge $\star$ operator
can be written in an arbitrary basis as follows:
\begin{align}
		\star 1 \ &= \ dV_g \ , \quad
		&(\star J)_{\alpha\beta\gamma} \ &= \ 
		-\epsilon_{\alpha\beta\gamma}^{\ \ \ \ \,  \delta}J_\delta
		\ , \qquad\qquad
		(\star F)_{\alpha\beta} \ = \ \frac{1}{2}\epsilon_{\alpha\beta}^{\ \ \ \gamma\delta}F_{\gamma\delta}
		\ , \notag\\
		(\star G)_\alpha \ &=\  -\frac{1}{6}\epsilon_\alpha^{\ \, \beta\gamma\delta}G_{\beta\gamma\delta} 
		\ , \quad
		&\star \omega \ &= \ \frac{1}{24}\epsilon^{\alpha\beta\gamma\delta}\omega_{\alpha\beta\gamma\delta}
		\ , \notag
\end{align}
for 0 through 4 forms, where $\epsilon_{\alpha\beta\gamma\delta}$ are the components of $dV_g$.
One also has $\star^2=(-1)^{p+1} $ on each class of forms $\Lambda^p$ (again
in the 4 dimensional case). 

It will also be useful for us to have 2 dimensional versions of the
above formulas in both the Lorentzian and Riemannian cases, in
particular for the factor metrics $h_{ab}$ and $\delta_{AB}$ of
\eqref{metric}. We label their volume forms and Hodge star operators
by $\epsilon_{ab}$, $\epsilon_{AB}$, $\star_h$, and $\star_\delta$
(resp). In the Riemannian case we have $\star_\delta^2=(-1)^p$ while
in the Lorentzian case $\star_h^2=(-1)^{p+1}$ as usual. The dual of 0
through 2 in is given by:
\begin{equation}
	\star_{h} 1 \ = \ dV_{h} \ , \qquad
	(\star_{h} J)_a \ = \ -\epsilon_{a}^{\ b}J_b \ , 
	\qquad \star \omega \ = \ \frac{1}{2}\epsilon^{ab}\omega_{ab} \ , \notag
\end{equation}
with identical formulas for $\star_\delta$. In the sequel we will
follow the convention that if $I$ or $J$ is some 1 form on
$\mathcal{M}$ then $\star_h$ (resp $\star_\delta$) act on it by
contraction in the $a$ (resp $A$) indices.

Next, we have the $L^2$ adjoint of $d$ with respect to the inner
product $\la \omega,\sigma \ra_{L^2} =\int_\mathcal{M} \omega\wedge
\star\sigma$, which in the 4 dimensional case is simply $d^\star=\star
d\star$. This operator also has a convenient expression in terms of
the connection Levi-Civita connection $\nabla$ of $g$ which is:
\begin{equation}
		d^\star J \ = \ -\nabla^\alpha J_\alpha \ , \qquad
		(d^\star F)_\alpha \ = \ \nabla^\beta F_{\alpha\beta} \ , \qquad
		(d^\star G)_{\alpha\beta} \ = \ -\nabla^\gamma G_{\alpha\beta\gamma} \ , \notag
\end{equation}
on 1 through 3 forms. 

One also has analogs of the previous formulas in the 2D case (both
Riemannian and Lorentzian).  To avoid confusion we set $\ud$ to denote
the exterior derivative in the $(t,r)$ plane, and $\sd$ to denote
exterior differentiation on the spheres $\mathbb{S}^2$ given by
$t=const,r=const$.  Then one also has co-derivative operators
$\ud^{\star}$ and $\sd^{\star}$, with similar formulas to the last
line above, where now the divergences involve the Levi-Civita
connections of $h$ and $\delta$.

Using the above formulas 
one may write the Maxwell equations \eqref{dsource_Maxwell} as follows:
\begin{equation}
		d F^\star \ = \ \star J \ , \qquad\qquad
		d F \ = \ -\star I \ .
		\label{dual_dsource_Maxwell}
\end{equation}
This allows us to give a useful characterization of the charges \eqref{charges}.

\begin{proposition}[The charge equations]\label{charge_prop}
  Let $F_{\alpha\beta}$ solve \eqref{dual_dsource_Maxwell}, and define
  $\overline{F}_{\alpha\beta}$ as on lines \eqref{charges} and
  \eqref{charge_average}. Then one has:
\begin{equation}
		\nabla^\beta \overline{F}^{\star}_{\alpha\beta} \ = \ \overline{I}_\alpha \ , \qquad\qquad
		\nabla^\beta \overline{F}_{\alpha\beta} \ = \ \overline{J}_\alpha \ , \label{averaged_dsource_Maxwell}
\end{equation}
where $\overline{I}_A=\overline{J}_A\equiv 0$ for $A=2,3$, and for $a=0,1$:
\begin{equation}
		\overline{I}_a \ = \ \frac{1}{4\pi} \int_{\mathbb{S}^2} I_a dV_{\mathbb{S}^2} \ , \qquad
		\overline{J}_a \ = \ \frac{1}{4\pi} \int_{\mathbb{S}^2} J_a 
		dV_{\mathbb{S}^2} \ . \label{source_averages}
\end{equation}
Furthermore, if the source components $I_a$ and $J_a$, for $a=0,1$ have sufficiently fast decay as
$r\to\infty$ we have:
\begin{equation}
		Q(t,r) \ = \ Q_\infty + 
		\int_r^\infty\!\!\!\!\int_{\mathbb{S}^2} (\star_h I)_r(t,s) s^2dV_{\mathbb{S}^2} \ , \qquad
		Q^\star(t,r) \ = \ Q^\star_\infty +
		\int_r^\infty\!\!\!\!\int_{\mathbb{S}^2} (\star_h J)_r(t,s) s^2dV_{\mathbb{S}^2}
		\ , \label{charge_equations}
\end{equation}
where $Q_\infty,Q_\infty^\star$ are given on line \eqref{charge_at_infinity}.
\end{proposition}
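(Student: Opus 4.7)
My plan is to first derive the integral formulas \eqref{charge_equations} for $Q(t,r)$ and $Q^\star(t,r)$ by applying Stokes' theorem to the Maxwell equations in the form \eqref{dual_dsource_Maxwell}, and then read off the averaged system \eqref{averaged_dsource_Maxwell} by direct differentiation and comparison with $d\overline{F}$, $d\overline{F}^\star$.

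For the integral formulas I would fix $t$ and apply Stokes' theorem to the 3-dimensional radial tube $\Sigma(t;r,r')=\{(t,s,x^A,x^B):r\leqslant s\leqslant r'\}$, oriented by $dV_g$ restricted first in $t$. Its boundary is $S_{t,r'}-S_{t,r}$ with the outward sphere orientation used in \eqref{charges}, so
\begin{equation}
Q(t,r')-Q(t,r) \ = \ \int_{\Sigma} dF \ = \ -\int_{\Sigma} \star I \ . \notag
\end{equation}
The tangent space of $\Sigma$ is spanned by $\partial_r$ together with sphere vectors, so the only component of $\star I$ that contributes is $(\star I)_{rAB}$, which by the product structure $g=h\oplus r^2\delta$ and $dV_g=dV_h\wedge r^2 dV_{\mathbb{S}^2}$ is exactly $(\star_h I)_r\, s^2 dV_{\mathbb{S}^2}\wedge ds$ (up to the orientation convention). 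Sending $r'\to\infty$ and invoking $Q_\infty$ from \eqref{charge_at_infinity} yields the stated formula for $Q(t,r)$. The identical argument applied to $dF^\star=\star J$ gives the formula for $Q^\star(t,r)$.

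For the averaged equations, I observe that by construction
\begin{equation}
\overline{F} \ = \ \frac{Q^\star(t,r)}{4\pi r^2}\sqrt{|h|}\, dt\wedge dr \ + \ \frac{Q(t,r)}{4\pi}\sqrt{|\delta|}\, dx^2\wedge dx^3 \ , \notag
\end{equation}
which is spherically symmetric and block-diagonal with respect to the $(t,r)$--$\mathbb{S}^2$ splitting. Exterior differentiation is then trivial: the first summand is a 2-form in $dt,dr$ depending only on $(t,r)$, so $d$ of it vanishes identically, while the second summand produces $(4\pi)^{-1}\ud Q\wedge dV_{\mathbb{S}^2}$. On the source side, for a 1-form $\overline{I}$ supported only in the $a=0,1$ indices, the product structure gives $-\star\overline{I}=-r^2\,\star_h\overline{I}\wedge dV_{\mathbb{S}^2}$. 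Matching these two 3-forms, the equation $d\overline{F}=-\star\overline{I}$ reduces to the purely two-dimensional identity $\ud Q=-4\pi r^2\,\star_h\overline{I}$, whose $\partial_r$ component is exactly what one obtains by differentiating \eqref{charge_equations}, and whose $\partial_t$ component is obtained by repeating the Stokes argument on a time tube $\{t_1\leqslant t'\leqslant t_2,\, r'=r\}\times\mathbb{S}^2$. Comparing with the spherical averages \eqref{source_averages} of $I_a$, both components agree, and the absence of an angular 3-form in $d\overline{F}$ forces $\overline{I}_A=0$. The analogous computation with $d\overline{F}^\star=\star\overline{J}$ handles the electric side.

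The only real obstacle in this proof is bookkeeping: keeping signs, the outward orientation of $S_{t,r}$ inside the slice $t=const$, and the Hodge factors $\sqrt{|h|}$, $r^2\sqrt{|\delta|}$ consistent between the full 4D $\star$ and its 2D reductions $\star_h$, $\star_\delta$. Once the product-metric conventions are pinned down, both \eqref{averaged_dsource_Maxwell} and \eqref{charge_equations} follow from one Stokes calculation each, with the angular-vanishing of $\overline{I},\overline{J}$ coming automatically from the fact that $\overline{F}$ carries no $dt\wedge dx^A$ or $dr\wedge dx^A$ components.
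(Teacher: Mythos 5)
Your proposal is correct, and its overall architecture coincides with the paper's: Stokes' theorem on radial tubes at fixed $t$ gives \eqref{charge_equations}, and then \eqref{averaged_dsource_Maxwell} is read off by computing $d\overline{F}$, observing that the electric piece $Q^\star(t,r)\overline{F}^{electric}$ is closed and that the magnetic piece contributes $(4\pi)^{-1}\ud Q\wedge dV_{\mathbb{S}^2}$, which is matched against $-\star\overline{I}$ exactly as you do. The one place where you genuinely deviate is the $\partial_t$ component of $\ud Q$: the paper obtains it from the continuity equation $d\star I=0$ written in the mixed form \eqref{charge_divergence}, integrating over $\mathbb{S}^2$ (the angular divergence drops out) and then in $r$ using the already-established radial formula; you instead apply the general Stokes identity \eqref{charge_diff_eqs} directly to the temporal tube $[t_1,t_2]\times\{r\}\times\mathbb{S}^2$, whose only contributing component of $\star I$ is $(\star I)_{tAB}\sim r^2(\star_h I)_t\,dt\wedge dV_{\mathbb{S}^2}$. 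Both are legitimate; your route is slightly more elementary and purely local (no decay at infinity is needed for that component, decay entering only to identify the constant with $Q_\infty$ in \eqref{charge_at_infinity}), while the paper's route emphasizes the conservation of the source current, which it reuses later. The only caveat is that you leave the orientation and sign bookkeeping, in particular the consistency of the induced boundary orientations on the temporal tube with the convention of \eqref{charges} and the resulting sign in $\ud Q=-4\pi r^2\star_h\overline{I}$ (the paper's \eqref{charge_var_eqs}), as an acknowledged but unverified step; carrying this out carefully is exactly what occupies the paper's computation, so it should be done explicitly in a final write-up.
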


An immediate consequence of the last proposition is that Theorem \ref{main_thm}
is reduced to the following result. Furthermore, to understand arbitrary 
$F_{\alpha\beta}$ one only needs to add to this the formulas \eqref{charge_equations}.

\begin{theorem}[Local energy decay estimates for chargeless fields]\label{main_thm_red}
Let $F_{\alpha\beta}$ be a solution to \eqref{dsource_Maxwell} in the region
$r\geqslant r_0$ and $0\leqslant t\leqslant T$
with uniformly vanishing charges $Q(t,r)=Q^\star(t,r)\equiv 0$. Then one has:
\begin{equation}
		\int_{\mathbb{S}^2} I_adV_{\mathbb{S}^2} \ = \ 
		\int_{\mathbb{S}^2} J_a dV_{\mathbb{S}^2} \ \equiv \ 0 \ ,
		\qquad a=0,1, \ 
		\notag
\end{equation}
in $r\geqslant r_0$ and $0\leqslant t\leqslant T$ as well, and in addition there is a 
local energy decay estimate:
\begin{equation}
		\lp{(w_{ln})^{-1}F}{LE[0,T]} \ \lesssim \ \lp{F(0)}{L^2(dV_{g|_{t=0}})} 
		+ \lp{w_{ln}(I,J)}{LE^*[0,T]}
		\ , \label{main_thm_est_red}
\end{equation}
with the same notation as in \eqref{main_thm_est} above.
\end{theorem}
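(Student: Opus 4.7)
My plan splits into two parts matching the two assertions of the theorem. The vanishing of the spherical averages $\int_{\mathbb{S}^2} I_a\, dV_{\mathbb{S}^2} = \int_{\mathbb{S}^2} J_a\, dV_{\mathbb{S}^2} = 0$ is an immediate corollary of Proposition \ref{charge_prop}: the hypothesis $Q(t,r) \equiv Q^\star(t,r) \equiv 0$ forces $\overline{F} \equiv 0$ through \eqref{charge_average}, so the divergences $\nabla^\beta \overline{F}^\star_{a\beta}$ and $\nabla^\beta \overline{F}_{a\beta}$ vanish identically, and by \eqref{averaged_dsource_Maxwell}--\eqref{source_averages} these divergences equal precisely $\overline{I}_a$ and $\overline{J}_a$.

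For the local energy decay bound \eqref{main_thm_est_red} itself, the core strategy is to reduce the first-order Maxwell system to a pair of decoupled scalar wave equations via a combination of spherical-harmonic decomposition and Hodge theory on the spheres $S_{t,r}$. Concretely, I would split the components of $F$ into the radial block $F_{ab}\, dx^a \wedge dx^b$, the cross block $F_{aA}\, dx^a \wedge dx^A$, and the angular block $F_{AB}\, dx^A \wedge dx^B$. On each sphere the one-form $F_{aA}$ decomposes as $(\sd \phi_a)_A + (\sd^\star \psi_a)_A$ with $\phi_a, \psi_a$ mean-zero scalars, while $F_{AB}$ is a scalar multiple of $\epsilon_{AB}$; the $\ell = 0$ content of $F_{ab}$ and $F_{AB}$ is exactly $\overline{F}$, which vanishes by assumption. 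The remaining $\ell \geq 1$ modes can be encoded by two Regge--Wheeler type scalars $u$ and $v$ on $\mathcal{M}$, and the system \eqref{dual_dsource_Maxwell} translates into scalar equations of the form $\Box_g u + V(r) u = G_I$ and $\Box_g v + V(r) v = G_J$, where $V(r) \sim \ell(\ell+1)/r^2$ plus lower-order geometric corrections and $G_I, G_J$ are built from angular derivatives of the components of $(I, J)$.

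With the reduction in hand, I would invoke the local energy decay theory for scalar wave equations on spherically symmetric stationary black hole backgrounds developed in \cite{MMTT_sch} and \cite{TT_LE}. The three hypotheses of Definition \ref{black_hole_defn} supply the required ingredients: asymptotic flatness for the standard Morawetz multiplier at spatial infinity, the non-degenerate horizon condition $\partial_r g^{rr} \neq 0$ at $r = r_M$ for the red-shift vector field, and normal hyperbolicity of the trapped surface at $r = r_\mathcal{T}$ for a scalar LED estimate with only a logarithmic loss, encoded by the weight $w_{ln}$. Summing the resulting bounds for $u$ and $v$ over spherical harmonics, which is permissible because the scalar estimates hold uniformly in $\ell$ once the angular eigenvalue is absorbed into $V(r)$, yields \eqref{main_thm_est_red}.

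The principal obstacle will be carrying out the Hodge-potential reduction \emph{with uniform control of the induced sources in the $LE^*$ norm}, so that the right-hand sides $G_I, G_J$ in the scalar equations are dominated by $\lp{w_{ln}(I,J)}{LE^*[0,T]}$. This requires verifying that the algebraic and differential identities converting $(I,J)$ into scalar sources do not produce non-integrable factors either near the trapped surface $r = r_\mathcal{T}$, where the weight $w_{ln}$ is saturated, or near the horizon $r = r_M$, where the red-shift geometry is delicate. A secondary but unavoidable point is ensuring the scalar potentials $\phi_a, \psi_a$ can be chosen globally and in a manner compatible with the $L^2$ initial data hypothesis on $F$; this reduces to invertibility of $\sDelta$ (plus lower order corrections) on mean-zero functions on $\mathbb{S}^2$, which is standard but must be tracked so that no derivatives are lost in the reduction.
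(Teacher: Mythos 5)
Your first assertion (the vanishing of the spherical averages of $I_a,J_a$) is argued exactly as in the paper, via Proposition \ref{charge_prop}, and is fine. The skeleton of your second part --- reduce to two spin-zero scalars satisfying a Regge--Wheeler type wave equation, then run a scalar local energy decay argument built on the red shift at $r=r_M$ and the normally hyperbolic trapping at $r=r_\mathcal{T}$ with a logarithmic loss --- is also the paper's route; the paper works with $\Box^0=\Box_h+r^{-2}\sDelta$ acting on $\phi=\frac12\epsilon^{AB}F_{AB}$ and $\phi^\star$ directly and recovers $\sF_{aA}$ from the div--curl system \eqref{dphi1}, \eqref{dphi3} rather than introducing Hodge potentials for $F_{aA}$, but that difference is cosmetic.

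The genuine gap is precisely the item you defer as ``the principal obstacle'': the source of the second-order equation is not of a form you can feed into the scalar LED theory of \cite{MMTT_sch}, \cite{TT_LE} without losing a derivative on $I$. The actual equation is \eqref{Box_0_RHS}: $\Box^0\phi=-\nabla^a(r^2\star_h I)_a-\star_\delta\sd J$, a $(t,r)$-divergence of $I$ plus an angular derivative of $J$. The angular derivative is harmless because the estimate carries $(-\sDelta)^{-\frac12}$, but the term $\nabla^aG_a$ with $G=-r^2\star_h I$ cannot be bounded in $LE^*$ by $\lp{w_{ln}I}{LE^*[0,T]}$: a naive application of the scalar estimate (per harmonic or not) demands a full $(t,r)$-derivative of $I$ in $LE^*$, i.e.\ an $\dot H^1$-scaling norm, which is strictly stronger than what \eqref{main_thm_est_red} allows. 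Relatedly, the initial-data term you would need is $\ud\phi(0)$, which is not controlled by $\lp{F(0)}{L^2(dV_{g|_{t=0}})}$; the controlled combination is $\ud\phi(0)-G(0)=\star_\delta\sd\sF(0)$. The paper resolves this with a separate statement, Theorem \ref{main_LE_thm1}, in which only $r^{-1}G$, $(-\sDelta)^{-\frac12}K$ with $K=\star_h\ud G$, and $(-\sDelta)^{-\frac12}H$ appear on the right, and whose proof needs a genuinely new step (Section \ref{LE_section2}): an elliptic ``gauge transformation'' $\phi\mapsto\phi-\aphi$, with $\aphi$ built by locally inverting $-\Delta_{r,t}-\sDelta$ (Proposition \ref{p:ell}), exploiting the continuity equation $\star_h\ud G=\sd^\star I$ so that the gradient part of $G$ is absorbed and only underivatived, curl-type data on $I$ remains. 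Without this ingredient (or an equivalent one) your plan of ``verifying the identities produce no non-integrable factors'' cannot close, since the difficulty is a derivative loss, not a weight problem. Two smaller points: the cited scalar results concern $\Box_g$ rather than $\Box_h+r^{-2}\sDelta$, so the horizon/trapping multiplier analysis must be redone for the latter operator (Section \ref{LE_section1}); and the truncation to $r\leqslant CT$, together with the exterior estimate \eqref{ext_LE} and the averaging over slabs $r>r_*$ near $r_0$, is used in an essential way and should appear in your argument.
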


\begin{remark}
The equations \eqref{charge_equations} show that if the charges $Q_\infty$ and $Q_\infty^\star$  vanish
(initially) then one may obtain a local energy decay estimate for the full field $F_{\alpha\beta}$ in terms
of initial data $\lp{F(0)}{L^2(dV_{g|_{t=0}})}$, and certain integrals of $I$ and $J$. However, 
to estimate the local energy decay contribution of the averages  $\overline{I},\overline{J}$
requires a RHS  norm with a different scaling. Precisely, we have the estimate
\begin{equation}
		\lp{(w_{ln})^{-1}F}{LE[0,T]}+	\lp{ r \overline{F}}{LE[0,T]} \ \lesssim \ \lp{F(0)}{L^2(dV_{g|_{t=0}})} 
		+ \lp{w_{ln}(I,J)}{LE^*[0,T]} + \lp{r(\overline{I},\overline{J})}{LE^*[0,T]}
		\ . \label{main_thm_est_extra}
\end{equation}
\end{remark}


\subsection{Background and further remarks}

The motivation for Theorem \ref{main_thm} is its relation to  stability problems in 
general relativity, in particular the problem of proving non-linear stability of the Kerr
family of metrics (see \cite{K_survey} and \cite{DR_survey} for a survey). 
It is generally accepted wisdom that the proof of such a stability theorem
will be strongly predicated on the theory of $L^\infty$ decay estimates for 
first order hyperbolic systems. It is also well known that for hyperbolic (and even other
dispersive)  PDE 
the assumption of  
local energy decay  estimates of the form \eqref{main_thm_est} allows one to 
prove much more refined  decay estimates in higher $L^p$ and weighted $L^2$
spaces (e.g.~Strichartz, conformal energy, and uniform $L^\infty$ estimates).

While there has been great progress lately
towards understanding  the local energy decay and higher 
$L^p$  behavior of solutions to the \emph{scalar} wave
equation $\Box_g=\nabla^\alpha\nabla_\alpha$ on black-hole backgrounds
(e.g.~see \cite{BS_uniform}, \cite{DR_sch}, \cite{DR_L00}, \cite{DR_uniform}, \cite{Luk_unifrom_sch},
\cite{Luk_uniform_kerr}, \cite{MMTT_sch}, \cite{MTT_timedep}, \cite{TT_LE}, \cite{T_uniform},
\cite{Y_decay})
relatively little has been done
for the case of higher spin equations such as Maxwell fields. The main estimates we are aware
of to date
are of conformal energy and uniform $L^\infty$ type in the case of Schwarzschild space
due to work of P. Blue 
\cite{B_Maxwell}, and similar estimates in the case of Kerr with $|a|\ll M$ more
recently due to Andersson-Blue \cite{AB_Max}.
In a companion to our paper \cite{MTT_Max}
it is also shown that local energy decay estimates of the form \eqref{main_thm_est}
imply much more refined $L^\infty$ estimates of the form studied in 
\cite{T_uniform} and \cite{MTT_timedep}
for general solutions to \eqref{dsource_Maxwell} on a broad class of non-symmetric
backgrounds. This puts additional emphasis on
generalizing Theorem \ref{main_thm} to a wider class of space-times.

We remark here that the problem of showing local energy 
decay for higher spin fields such as Maxwell's equations is fundamentally
different from the case of scalar fields due to the presence of finite energy bound state solutions
(i.e.~charges). A somewhat better model for this type of behavior would be the scalar wave equation
$\Box_g$ on a black-hole background plus a possibly negative potential; a problem which is completely open.
The method of the present paper is simplified to a certain extent by the happy
coincidence that for Maxwell fields on spherical backgrounds 
the only finite energy bound state solutions can be eliminated
by subtraction of the spherical average from the sources $(I,J)$ and then removing the charge
from $r=\infty$ (this latter process essentially ``removes the charge from initial data'').
In a sequel to this paper we plan to show that an analog of Theorem \ref{main_thm} holds for 
the case of arbitrary small and sufficiently well localized
perturbations of the metric given by Definition \ref{black_hole_defn}.
The proof of this more general result involves a number of complications not present here.

Finally, we remark that it is not hard to check the Schwarzschild and
family of non-extremal\footnote{For recent work on local energy decay
type estimates for scalar fields in the extremal case
we refer the reader to work of Aretakis \cite{Art1} and \cite{Art2}.} 
Reissner-Nordstr\"om metrics all obey the
general requirements of Definition \ref{black_hole_defn}. Furthermore,
if $g_{\alpha\beta}$ satisfies such assumptions, so does any
sufficiently small and well localized stationary spherically symmetric
perturbation of it. In general we believe that small (not necessarily
symmetric or stationary) perturbations of such metrics give rise to a
general class of space-times with ``good dispersive properties'' not
only for $\Box_g$, but for a variety of first and second order wave
equations. We plan to address this in future works.


\subsection{Organization of the paper}

 This paper is organized as follows. In the remainder of this section we give 
a quick proof of Proposition \ref{charge_prop}, followed by some basic notation
that will be useful in the sequel.

In the  Section \ref{energy_sect} we recall the 
basic energy estimates for Maxwell fields. These will allow us to reduce Theorem
\ref{main_thm_red} to a spatially localized from. However, beyond this simple localization,
energy estimates at the level of $F_{\alpha\beta}$ are not used in the remainder of the proof.

Section \ref{1st_red_sect} reduces the proof of estimate \eqref{main_thm_est_red} to the 
spin-zero components of $F_{\alpha\beta}$.
There are  two of these, which represent the two dynamical degrees of freedom in an
electro-magnetic field. The proof of this reduction follows from elliptic Hodge theory and does
not rely on any time dependent analysis.

In Section \ref{2nd_red_sect} we introduce the main estimate of the paper which is contained 
in Theorem \ref{main_LE_thm1}. This is a local energy decay estimate for a certain 
second order wave equation which is satisfied by the spin-zero components of $F_{\alpha\beta}$.
Reduction of first order relativistic systems to such second order wave equations 
is standard in the literature (e.g.~see \cite{Teuk}, \cite{B_Maxwell}, and 
\cite{H_Ein} for the case of the full Einstein equations). However, our work is the first to provide
sharp local energy decay estimates for general inhomogeneous 
solutions to such equations (in the case of Maxwell), with weights that correspond
to local energy decay at the level of the physical energy of the
original fields. Specifically, our estimates have a sharp
$L^2$ scaling as opposed to an $\dot H^1$ scaling, so in particular they contain more information
at low frequencies. To accomplish this one must keep track of a remnant of the tensorial  character
of the original first order system which resides in the (inhomogeneous) second order equation. It
turns out this feature of the second order equation  
is  intimately connected to the issue of estimating ``dynamic charges'' which come from
the source terms $(I,J)$ on RHS\eqref{dsource_Maxwell}.

Sections \ref{LE_section1} and \ref{LE_section2} are the technical heart of the
paper.  In Section \ref{LE_section1} we prove a preliminary local
energy decay estimate for our spin-zero  Teukolsky
equation. This estimate is a direct analog of the local energy decay
estimate in \cite{MMTT_sch}. Our proof combines ideas from
\cite{BS_uniform}, \cite{DR_sch}, and \cite{MMTT_sch}. 
More specifically, we use red-shift estimates similar to those of \cite{DR_sch}
along with certain ``energy estimates''   to obtain a local energy decay
estimate close to $|r-r_M|\ll 1$ with a source error at $r\approx r_M+\epsilon$. We then introduce an
analog of Regge-Wheeler coordinates in $r>r_M$ and prove a local energy decay estimate
in the exterior along the lines of \cite{BS_uniform}, but with sharp norms as introduced in 
\cite{MMTT_sch}. Interestingly, we are able to show that one can directly glue together the exterior
and horizon estimates because the conservation of (degenerate) energy close to $r=r_M$
controls the worst part of the error coming from truncation of the Regge-Wheeler estimate.
In effect this allows one to  work in two convenient  coordinate  systems simultaneously,
one of them singular, and  to still obtain a regular global   estimate.
 
In Section \ref{LE_section2} we upgrade the preliminary scalar local energy decay estimate 
of Section \ref{LE_section1}  to an estimate which has a non-local character and takes into 
account the structure of the source terms $(I,J)$ to the original first order system \eqref{dsource_Maxwell}. 
To accomplish this
we use a certain ``gauge transformation'' of the inhomogeneous spin-zero wave equation which
is analogous to a space-time Coulomb gauge for Maxwell fields. This allows us to directly
estimate the portion of the spin-zero components of $F_{\alpha\beta}$ which depend 
elliptically on the sources $(I,J)$, the remainder being estimated by the dispersive local energy
decay estimate explained above.


In an Appendix we list some auxiliary estimates that come up in the
bulk of the proof. These are isolated for the convenience of the
reader, and so they don't interrupt the flow of the main argument.


\subsection{Proof of Proposition \ref{charge_prop}}
Here we give a quick demonstration. More general formulas which also
imply this proposition will be given in Lemma \ref{grad_iden_lemma}
below.

\step{1}{Proof of formulas \eqref{charge_equations}}
Using the identities \eqref{charge_diff_eqs} we have
\begin{equation}
		Q_\infty  - Q(t,r) \ = \ -\int_{\Sigma_{t,r}} (\star I)|_{\Sigma_{t,r}} \ , \qquad
		Q^\star_\infty  - Q^\star(t,r) \ = \ -\int_{\Sigma_{t,r}} (\star J)|_{\Sigma_{t,r}} \ , \notag
\end{equation}
where $\Sigma_{t,r}=\{(x^0,x^1,x^2,x^3)\, \big| \, x^0=t, \ x^1>r\}$. 
One also has that with the positive orientation:
\begin{equation}
		(\star I)|_{\Sigma_{t,r}} \ = \ -\frac{1}{2} 
		\epsilon_{rAB}^{\ \ \ \ \ a}I_a dr\wedge dx^A\wedge dx^B 
		\ = \ - r^2 (\epsilon_h)_a^{\ b}I_b dr\wedge dV_{\mathbb{S}^2}
		\ , \notag
\end{equation}
with a similar formula for $(\star J)|_{\Sigma_{t,r}}$. The result follows.

\step{2}{Variation of the charge} From equations
\eqref{charge_equations} we compute the exterior derivatives of
$Q,Q^\star$ in the $(t,r)$ variables. Since the formulas
\eqref{charge_equations} are identical, it suffices to compute things
for the magnetic charge $Q$. We claim that:
\begin{equation}
		d Q \ = \ -r^2 \int_{\mathbb{S}^2} (\star_h I)_a dV_{\mathbb{S}^2} dx^a 
		\ . \label{charge_var_eqs}
\end{equation}
For the $\partial_r$ derivative this is immediate from \eqref{charge_equations}.

On the other hand, to compute the derivative with respect to $t$ we use the continuity
equation $d\star I=0$, which follows immediately from \eqref{dual_dsource_Maxwell}.
In terms of covariant derivatives one has:
\begin{equation}
		\nabla^a (r^2 I)_a + \snabla^AI_A \ = \ -\ud^\star (r^2 I ) - \sd^\star I \ =\ 
		0 \ , \notag
\end{equation}
where $\nabla_a$ (resp $\snabla_A$) is the Levi-Civita connection of $h_{ab}$ (resp
$\delta_{AB}$). Using the identity $\ud^{\star}=\star_h \ud \star_h$
we may write the continuity equation in the mixed form: 
\begin{equation}
		\star_h \ud (r^2 \star_h I) \ = \ - \sd^\star I \ . \label{charge_divergence}
\end{equation}
In term of $(t,r)$ derivatives, and with our choice of orientation,  this gives:
\begin{equation}
		\partial_t (r^2 \star_h I)_r \ = \  \partial_r (r^2 \star_h I)_t -
		\sd^\star (\sqrt{|h|} \star_\delta I) \ . \notag
\end{equation}
Since an angular divergence integrates to zero on $\mathbb{S}^2$, 
after applying the fundamental theorem 
of calculus in the radial variable  we are left with:
\begin{equation}
		\partial_t Q \ = \ -r^2 \int_{\mathbb{S}^2} (\star_h I)_t dV_{\mathbb{S}^2} 
		\ , \notag
\end{equation}
which proves \eqref{charge_var_eqs}.

\step{3}{Computing $d\overline{F}$} 
First note that
$d \big( Q^\star(t,r) \overline{F}^{electric}\big)  =  0$ 
for any function $Q^\star(t,r)$. On the other hand, by line 
\eqref{charge_var_eqs} we have:
\begin{equation}
		d \big( Q(t,r) \overline{F}^{magnetic}\big) \ = \ -\frac{1}{4\pi} r^2 
		\big(\star_h \int_{\mathbb{S}^2}  
		I dV_{\mathbb{S}^2}\big)_a \cdot  dx^a\wedge dV_{\mathbb{S}^2} \ = \ 
		-\star \big( \frac{1}{4\pi} \int_{\mathbb{S}^2} I_a dV_{\mathbb{S}^2}\cdot dx^a\big) \ . \notag
\end{equation}
Adding these  identities, and noting the second idenitity of line \eqref{dual_dsource_Maxwell},
gives the second equation on line \eqref{averaged_dsource_Maxwell}.

The first identity on  line \eqref{averaged_dsource_Maxwell} follows from the previous 
calculation  and duality.


\subsection{Some notation}
We list here several notational conventions that will be used in the sequel. 

As usual we denote
$A\lesssim B$ (resp $A\ll B$; $A\approx B$)
if $A\leqslant CB$ for some fixed $C>0$ which may change from line to line
(resp $A\leqslant \epsilon B$ for a small $\epsilon>0$;
both $A\lesssim B$ and $B\lesssim A$).

For a collection of non-commutative operators  we often denote their ordered product 
without parenthesis. A typical example of this would be a mixture of pseudodifferential operators such as
$\partial_r \Delta^{-1} f \partial_r$, where $f$ is a function of $(r,x^A)$.

If $\phi(t,r,x^A)$ is a space-time function we denote by $\phi[t_0]$ the restriction of the gradient $d\phi$
to the hypersurface $t=t_0$. This notation is context sensitive as the components of 
$d\phi$ will be given with respect to different frames depending on the application. In practice there
are only two frames  used here, either $(\partial_t,\partial_r , e_{\hat{A}})$ or
$(\partial_t,\partial_r , e_{{A}})$. The choice (i.e.~normalization of angular derivatives)
will be clear from context.

A basic estimate that will be used many times in the sequel is the following: If $f$ is a sufficiently
smooth function on $\mathbb{S}^2$ we denote by $\overline{f}=(4\pi)^{-1}
\int_{\mathbb{S}^2}f dV_{\mathbb{S}^2}$ its spherical average. Then Poincare's estimate says that:
\begin{equation}
		\lp{f-\overline{f}}{L^2(dV_{\mathbb{S}^2})} \ \lesssim \ 
		\lp{\sd f}{L^2(dV_{\mathbb{S}^2})} \ . \label{S2_poincare}
\end{equation}

\ret

\section{Energy Estimates for Maxwell Fields}\label{energy_sect}

In this section we recall the basic energy estimates for Maxwell
fields on a Lorentzian background. These will be used in a auxiliary
manner in the sequel, but are not necessary for the core part of the
proof of local energy decay.\footnote{Our proof of local energy decay
  uses energy conservation in an essential way, but only for  certain
  components of $F_{\alpha\beta}$. For these specific components we will
  only need to show certain spatially localized versions of the
  conservation of energy.}  For any antisymmetric two-form on a
$(3+1)$ space-time we define:
\begin{equation}
		Q_{\alpha\beta}[F] \ = \ F_{\alpha\gamma}F_{\beta}^{\ \ \gamma}
		- \frac{1}{4}g_{\alpha\beta}F_{\gamma\delta}F^{\gamma\delta} \ = \ 
		\frac{1}{2}\big( F_{\alpha\gamma}F_{\beta}^{\ \ \gamma} + 
		F^\star_{\alpha\gamma}F_{\beta}^{\star \ \gamma} \big) \ . \notag
\end{equation}
Then with the notation of line \eqref{dsource_Maxwell} we have the divergence identity:
\begin{equation}
		\nabla^\alpha Q_{\alpha\beta}[F] \ = \ \frac{1}{2}( F_{\alpha\beta} J^\alpha + 
		F^\star_{\alpha\beta} I^\alpha)  \ . \label{divergence_iden}
\end{equation}
To measure energy contributions  we use the following definition:

\begin{deff}[Regular space-like hypersurfaces]\label{spacelike_defn}
Fix $(t,r)$ coordinates as in Definition \ref{black_hole_defn}.
We say a space-like hypersurface $\mathcal{S}$ in $(\mathcal{M},g_{\alpha\beta})$
is ``regular'' if there exists a constant $c>0$ such that:
\begin{equation}
		T \ = \ b_+\ell + b_-\underline{\ell} \ , \qquad
		b_\pm > c \ , \notag
\end{equation}
where $T$ is the future directed (with respect to $-\nabla t$) normal 
to $\mathcal{S}$ and $\ell$, $\underline{\ell}$,
are future directed null vector-fields such that:
\begin{equation}
		\ell \ = \ -\nabla t + a\partial_r \ , \qquad
		\underline{\ell} \ = \ -\nabla t - a \partial_r \ , \qquad\qquad
		a >0 \ . \notag
\end{equation}
\end{deff}
In the usual way we have the following:

\begin{proposition}[Energy estimates for Maxwell fields]\label{energy_prop}
Let $\mathcal{R}\subseteq \mathcal{M}$ be an open set bounded by two regular
space-like hypersurfaces $\mathcal{C}_1$ and $\mathcal{C}_2$, where 
$\mathcal{C}_2$ is to the future of $\mathcal{C}_1$ (with respect to $-\nabla t$).
If  $F_{\alpha\beta}$ solves \eqref{dsource_Maxwell} on $\mathcal{R}$
then one has the following uniform bound in $\epsilon>0$:
\begin{equation}
		\lp{F|_{\mathcal{C}_2}}{L^2(dV_{g|_{\mathcal{C}_2}})} \ \lesssim \ 
		\epsilon \lp{(w_{ln})^{-1} F}{LE(\mathcal{R})} + 
		\lp{F|_{\mathcal{C}_1}}{L^2(dV_{g|_{\mathcal{C}_1}})} 
		+ \epsilon^{-1}\lp{w_{ln} (I,J)}{LE^*(\mathcal{R})} \ . \label{maxwell_energy}
\end{equation}
Here the components of $F,I,J$ are computed in any regular  frame such as
$\{\partial_t,\partial_r , e_{\hat{A}}, e_{\hat{B}}\}$, and the notation $F|_{\mathcal{C}_i}$
denotes the restriction of such scalar components to $\mathcal{C}_i$ (not the pull-back of forms).
\end{proposition}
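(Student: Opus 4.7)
The approach is the standard vector-field method applied to the Maxwell stress-energy tensor $Q_{\alpha\beta}[F]$. Starting from the divergence identity \eqref{divergence_iden}, for any smooth vector field $X$ the energy current $P^\alpha = Q^{\alpha\beta}[F]X_\beta$ satisfies
\begin{equation}
\nabla_\alpha P^\alpha \ = \ \tfrac{1}{2}\bigl(F_{\alpha\beta}J^\alpha + F^\star_{\alpha\beta}I^\alpha\bigr)X^\beta + Q^{\alpha\beta}[F]\pi^X_{\alpha\beta} \ , \notag
\end{equation}
where $\pi^X_{\alpha\beta} = \tfrac{1}{2}\mathcal{L}_X g_{\alpha\beta}$ is the deformation tensor of $X$. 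The main task is to pick a future-directed uniformly timelike $X$ on a neighborhood of $\mathcal{R}$. The subtlety, noted in Remark \ref{BH_def_rem}, is that the Killing field $\partial_t$ is timelike only on $r > r_M$, while the hypersurfaces $\mathcal{C}_i$ may extend across the horizon. I would therefore take $X = \partial_t$ on $r \geq r_M + \delta$ (where $\pi^X \equiv 0$ by stationarity) and smoothly interpolate to a red-shift type combination $X = \partial_t + c\,\partial_r$ (or, more invariantly, a positive combination of the null vectors $\ell, \underline{\ell}$ from Definition \ref{spacelike_defn}) on $r \leq r_M + \delta$, with $c$ chosen large enough that $X$ remains strictly timelike through and past the horizon.

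Integrating $\nabla_\alpha P^\alpha$ over $\mathcal{R}$ via Stokes' theorem produces boundary contributions on the two caps plus a bulk integral. By the regularity condition of Definition \ref{spacelike_defn}, each unit normal $T_i$ to $\mathcal{C}_i$ is uniformly timelike with bounded hyperbolic angle to $X$, so the Maxwell dominant energy condition yields the pointwise comparison $Q(X, T_i) \simeq |F|^2$, and hence the boundary integrals are comparable to $\lp{F|_{\mathcal{C}_i}}{L^2(dV_{g|_{\mathcal{C}_i}})}^2$. Following Dafermos--Rodnianski, the red-shift construction can be arranged so that the bulk deformation contribution $Q^{\alpha\beta}[F]\pi^X_{\alpha\beta}$ has a favorable sign near the horizon, driven by the non-degeneracy $\partial_r g^{rr} \neq 0$ at $r_M$; since $\pi^X$ vanishes outside a bounded $r$-neighborhood of the horizon (which is disjoint from both the trapped set $r = r_\mathcal{T}$ and from infinity), this allows one to absorb or discard the deformation contribution as a non-negative bulk term. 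The remaining source error is handled by the $LE$--$LE^*$ duality pairing
\begin{equation}
\int_\mathcal{R} \bigl| F\!\cdot\!J + F^\star\!\cdot\!I\bigr|\, dV_g \ \lesssim \ \lp{w_{ln}^{-1}F}{LE(\mathcal{R})}\cdot \lp{w_{ln}(I,J)}{LE^*(\mathcal{R})} \ , \notag
\end{equation}
which is immediate from the dyadic definitions \eqref{LE_norms} since $w_{ln}^{-1}$ and $w_{ln}$ are reciprocal weights.

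Assembling these contributions yields the quadratic energy inequality
\begin{equation}
\lp{F|_{\mathcal{C}_2}}{L^2(dV_{g|_{\mathcal{C}_2}})}^2 \ \lesssim \ \lp{F|_{\mathcal{C}_1}}{L^2(dV_{g|_{\mathcal{C}_1}})}^2 + \lp{w_{ln}^{-1}F}{LE(\mathcal{R})}\cdot \lp{w_{ln}(I,J)}{LE^*(\mathcal{R})} \ . \notag
\end{equation}
Taking square roots using subadditivity and applying Young's inequality $\sqrt{xy} \leq \tfrac{1}{2}\epsilon x + \tfrac{1}{2}\epsilon^{-1} y$ to the mixed product then yields the stated linear bound \eqref{maxwell_energy}. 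The main obstacle is the construction of the red-shift vector field $X$ extending $\partial_t$ across the horizon with a favorable deformation-tensor sign; once such an $X$ is available, the remainder of the argument proceeds by standard vector-field techniques combined with the $LE$--$LE^*$ duality.
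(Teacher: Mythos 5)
Your overall framework (Maxwell stress tensor, Stokes' theorem, coercivity of the flux on regular spacelike caps, then $LE$--$LE^*$ duality plus Young's inequality for the sources) is the same as the paper's, but your treatment of the multiplier has a genuine gap. You glue $\partial_t$ to a red-shift-type vector field across the horizon and assert that the resulting deformation term can be ``absorbed or discarded'' because it is supported in a bounded $r$-neighborhood of $r_M$. The red-shift construction produces a favorably signed bulk only in a small neighborhood of $r=r_M$; in the transition region where $X$ interpolates back to $\partial_t$ the deformation tensor ${}^{(X)}\pi$ has no sign, and the corresponding bulk term is of size $\int_{\{|r-r_M|\approx \delta\}}|F|^2\,dV_g$, i.e.\ comparable to $\lp{(w_{ln})^{-1}F}{LE(\mathcal{R})}$ with a fixed $O(1)$ constant. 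Such a term can neither be discarded (no sign) nor absorbed into $\epsilon \lp{(w_{ln})^{-1}F}{LE(\mathcal{R})}$ for \emph{arbitrary} $\epsilon>0$, and the smallness there is not cosmetic: it is exactly what gets bootstrapped in the derivation of \eqref{ext_LE} and in the gluing of the energy estimate to the spin-zero bounds, so proving \eqref{maxwell_energy} with an $O(1)$ constant in front of the $LE$ norm of $F$ is not sufficient. Controlling the unsigned transition-region bulk in the Dafermos--Rodnianski manner would require an integrated decay estimate near the horizon, which is precisely what is \emph{not} available at this stage (the proposition is an input to it). A smaller slip: ``$c$ chosen large enough that $X$ remains strictly timelike'' is backwards, since $\partial_r$ is spacelike a large transverse component makes $X$ spacelike; the admissible transverse component is bounded, which is what the actual red-shift construction provides.

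The difficulty you are fighting is one the paper never encounters, because you did not use the hypothesis that makes the proposition ``standard'': by part i) of Definition \ref{black_hole_defn} the coordinate $t$ is a regular time function on all of $\mathcal{M}$ with $\la dt,dt\ra<0$ (it is not the singular Schwarzschild time, cf.\ Remark \ref{BH_def_rem}), so $-\nabla t$ is uniformly timelike everywhere, in particular on and across $r=r_M$, even though $\partial_t$ is not. The paper's proof simply contracts $Q_{\alpha\beta}$ with $-\nabla t$, applies Stokes' theorem on $\mathcal{R}$ and Cauchy--Schwarz on the source term, the key point being that Definition \ref{spacelike_defn} is formulated relative to $-\nabla t$ precisely so that $-Q(T,\nabla t)\approx \sum_{\alpha<\beta}|F_{\alpha\beta}|^2$ on the caps $\mathcal{C}_i$; no horizon-adapted vector field, gluing, or sign discussion near $r=r_M$ is needed. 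Your final two displays (the duality pairing and the Young/square-root step) then go through verbatim with this multiplier.
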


\begin{proof}
The proof is standard. First contract $Q_{\alpha\beta}$ above with the vector-field $-\nabla t$.
Then using the divergence identity \eqref{divergence_iden} one employs Stokes theorem and
Cauchy-Schwartz. The key observation is that:
\begin{equation}
		 - Q(T,\nabla t) \ \approx \ \sum_{\alpha<\beta} |F_{\alpha \beta}|^2 \ , \notag
\end{equation}
where $T$ is the (future) normal to $\mathcal{C}_i$ and
the RHS sum is taken over the frame $\{\partial_t,\partial_r , e_{\hat{A}}, e_{\hat{B}}\}$.
See \cite{HE} for more details.
\end{proof}

\ret

\section{First Order Formulation of the Equations. Reduction of the Main Theorem to
a Spin-Zero  Local Energy Decay Estimate}\label{1st_red_sect}

We begin with some first order equations for components of $F_{\alpha\beta}$ when
 \eqref{dsource_Maxwell} holds. First we introduce some notation:
 \begin{equation}
 		\phi \ = \ \frac{1}{2}\epsilon^{AB}F_{AB} \ , \qquad
		\phi^\star \ = \ -\frac{1}{2}\epsilon^{AB}F^\star_{AB} \ , \qquad
		\sF_{aA} \ = \ F_{aA} \, \qquad
		\sF^\star_{aA} \ = \ F^\star_{aA}
		\ . \label{components}
 \end{equation}
 Note that the quantities $\phi,\phi^\star$ are scalars.
Here we think of $\sF,\sF^\star$ as  sections of $T^*(\mathbb{R}^2)\otimes T^*(\mathbb{S}^2)$, in other
words $\sF$ is a tensor with four components $F_{02},F_{03},F_{12},F_{13}$ and similarly for $\sF^\star$. 
We let  $\star_h,\ud,\ud^\star$ (resp $\star_\delta,\sd,\sd^\star$) 
act on $\sF,\sF^\star$ in the obvious way, by touching only the first (resp  second) set of components.
It turns out that by Hodge duality the second tensor $\sF^\star$ is redundant, specifically:
\begin{equation}
		\sF^\star_{aA} \ = \ \frac{1}{2} \epsilon_{aA}^{\ \ \ \gamma\delta}F_{\gamma\delta}
		\ = \ -(\epsilon_h)_{a}^{\ \, b}(\epsilon_\delta)_A^{\ \, B}F_{bB} \ = \ 
		-(\star_h\star_\delta \sF)_{aA} \ . \label{sF_duality}
\end{equation}
In particular notice that the 6 scalar quantities $\phi,\phi^\star$ and $\sF_{aA}$ for $a=0,1$, $A=2,3$
span the values of $F_{\alpha\beta}$. 

\begin{remark}
One may compare the quantities listed on line \eqref{components} to the standard null decomposition
of an electromagnetic field on Minkowski space as defined by Christodoulou-Klainerman \cite{CK_Fields}. 
Then we have $\phi=r^2\sigma$, $\phi^\star=r^2\rho$, and $\sF_{aA}$ is a linear combination
of $r\alpha_A$ and $r\underline{\alpha}_A$. We remark that since all components are on an equal footing
with respect to the natural $L^2$ energy \eqref{maxwell_energy}, there is no need to further decompose
$\sF_{aA}$ in proving our estimates.
\end{remark}

Our main result here is to relate the $(t,r)$ derivatives of $\phi,\phi^\star$ to the angular
derivatives of $\sF$ and the sources $I,J$. This will generalize \eqref{charge_var_eqs} above.

\begin{lemma}[Gradient identities for $\phi,\phi^\star$]\label{grad_iden_lemma}
Let $F_{\alpha\beta}$ satisfy the equations \eqref{dsource_Maxwell}, and define the 
quantities on line \eqref{components} from it. Then one has:
\begin{align}
		\ud \phi \ &= \ \star_\delta \sd \sF - r^2 \star_h I
		\ , \label{dphi1}\\
		 \ud \phi^\star \ &= \ - \star_\delta \sd \sF^\star - r^2\star_h J
		\ , \label{dphi2}\\
		\star_h \ud \phi^\star \ &= \ \sd^\star \sF - r^2 J
		\ . \label{dphi3}
\end{align}
\end{lemma}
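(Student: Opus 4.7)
Each of the three identities arises from projecting the Maxwell equations \eqref{dual_dsource_Maxwell} onto components of mixed planar/angular type, exploiting the block factorization $\epsilon_{abAB} = r^2(\epsilon_h)_{ab}(\epsilon_\delta)_{AB}$ of the ambient volume form (which follows from $\sqrt{|g|}=r^2\sqrt{|h|}\sqrt{|\delta|}$ and the orientation convention fixed in the introduction), together with the $(t,r)$-independence of $(\epsilon_\delta)^{AB}$.

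For \eqref{dphi1} my plan is to project $dF = -\star I$ onto the $(aAB)$-component. Antisymmetry of $F$ and the cyclic expansion of the exterior derivative give the clean identity
\[
(dF)_{aAB} \ = \ \partial_a F_{AB} - (\sd\sF)_{aAB},
\]
and contracting against $\tfrac12(\epsilon_\delta)^{AB}$ collapses the left side into $(\ud\phi)_a$, the $(\sd\sF)_{aAB}$ term into $(\star_\delta\sd\sF)_a$ (by definition of the sphere Hodge star on 2-forms), and the source term $-\tfrac12(\epsilon_\delta)^{AB}(\star I)_{aAB}$ into $-r^2(\star_h I)_a$ after using the volume-form factorization and the convention $(\star_h I)_a = -(\epsilon_h)_a{}^b I_b$. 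Identity \eqref{dphi2} follows from the identical calculation applied to $dF^\star=\star J$, with the overall sign tracking the $(-)$ in the definition of $\phi^\star$.

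For \eqref{dphi3} my plan is to apply $\star_h$ to \eqref{dphi2} and trade $\sd\sF^\star$ for $\sd^\star\sF$ by invoking the Hodge duality \eqref{sF_duality}, $\sF^\star = -\star_h\star_\delta\sF$. Since $\sd$ involves only angular derivatives it commutes with $\star_h$, and $\star_h^2=(-1)^{p+1}=+1$ on planar 1-forms in Lorentzian signature. Combined with the 2D Riemannian identity $\sd^\star=-\star_\delta\sd\star_\delta$ on sphere 1-forms (a direct check in isothermal coordinates on $\mathbb{S}^2$), the chain of four stars applied to $\sd\sF^\star$ collapses to a single $\sd^\star\sF$, while the source contribution $-r^2\star_h\star_h J$ simplifies to $-r^2 J$. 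Alternatively, one may derive \eqref{dphi3} directly from the $a$-component of $d^\star F=J$ by computing $\nabla^B F_{aB}$ relative to the sphere divergence $\snabla^B F_{aB}$ on the warped product (they differ by a Christoffel term proportional to $rh^{br}F_{ab}$) and using $F_{ab}=r^{-2}\phi^\star(\epsilon_h)_{ab}$ (which follows from 2D antisymmetry and \eqref{components}) to convert $\nabla^b F_{ab}$ into $\star_h\ud\phi^\star$ modulo a correction that exactly cancels the Christoffel piece.

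The main obstacle I foresee is the careful sign bookkeeping across the two 2D Hodge stars (one Lorentzian on the $(t,r)$-plane, one Riemannian on $\mathbb{S}^2$) and the warping factor $r^2$ that enters through $g^{AB}=r^{-2}\delta^{AB}$; in particular the $r^2$ prefactor on $J$ in \eqref{dphi3} should trace back to this warping. A useful sanity check is to specialize to Minkowski with a purely angular-polarized test field $F=f(r,\theta)\,dt\wedge d\theta$ and verify each identity component-by-component against a direct Christoffel calculation.
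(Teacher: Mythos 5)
Your proposal is correct and follows essentially the same route as the paper: \eqref{dphi1} is obtained exactly as in the text by expanding the mixed $(a,A,B)$ component of $dF=-\star I$ and tracing with $\tfrac12(\epsilon_\delta)^{AB}$ (using the factorization of the ambient volume form), and \eqref{dphi3} by applying $\star_h$ to \eqref{dphi2}, commuting $\star_h$ with $\star_\delta\sd$, and invoking \eqref{sF_duality} together with $\sd^\star=-\star_\delta\sd\star_\delta$ and $\star_h^2=+1$ on planar one-forms. The only cosmetic difference is that the paper deduces \eqref{dphi2} from \eqref{dphi1} by the formal Hodge duality $\phi\to-\phi^\star$, $\sF\to\sF^\star$, $I\to-J$, whereas you rerun the same computation on $dF^\star=\star J$; these are the same argument, and your suggested Minkowski sanity check for the sign bookkeeping is a sensible addition.
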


\begin{proof}
Notice that line \eqref{dphi2} follows from \eqref{dphi1} and Hodge duality which sends 
$\phi \to-\phi^\star$, $\sF\to \sF^\star$, and $I\to -J$.

Next, we see that line \eqref{dphi3} follows by applying $\star_h$ to line \eqref{dphi2},
using $[\star_h,\star_\delta\sd]=0$, and then using the identity \eqref{sF_duality} with
$\sd^\star=-\star_\delta \sd \star_\delta$.

To show \eqref{dphi1} we expand the second member of line 
\eqref{dual_dsource_Maxwell} in the basis $(\partial_a,\partial_A,\partial_B)$
which gives:
\begin{equation}
		\partial_a  F_{AB} - \snabla_{A}F_{aB} + \snabla_{B}F_{aA} \ = \ -(\star I)_{aAB} \ , \notag
\end{equation}
where $\snabla$ is the standard round connection on $\mathbb{S}^2$. Tracing the last line with 
$\frac{1}{2}\epsilon^{AB}$ and using the identity:
\begin{equation}
		\frac{1}{2}\epsilon^{AB} (\star I)_{aAB} \ = \ -\frac{1}{2}\epsilon^{AB} r^2 \epsilon_a^{\ \, b}
		\epsilon_{AB} I_b \ = \ -r^2 \epsilon_a^{\ \, b}I_b \ = \ 
		r^2(\star_h I)_a \ , \notag
\end{equation}
we have the desired result.
\end{proof}


\subsection{Reduction of the main theorem}

The equations \eqref{dphi1} and \eqref{dphi3} completely determine the values
of $\sF$ in terms of $\phi,\phi^\star$. This may be quantified as follows:

\begin{proposition}[Reduction to spin-zero components]\label{div_curl_prop}
Let $\sF$, $\phi,\phi^\star$, and $I,J$ solve the system \eqref{dphi1} and \eqref{dphi3}.
Suppose in addition that:
\begin{equation}
		\int_{\mathbb{S}^2}\phi dV_{\mathbb{S}^2}
		\ = \ \int_{\mathbb{S}^2}\phi^\star dV_{\mathbb{S}^2}
		\ = \ \int_{\mathbb{S}^2}I_a dV_{\mathbb{S}^2} \ = \ 
		\int_{\mathbb{S}^2}J_a dV_{\mathbb{S}^2} \ \equiv \ 0 
		\ , \qquad a=0,1. \label{spin_zero_moment}
\end{equation}
Then one has the $L^2$ estimate:
\begin{equation}
		\lp{(w_{ln})^{-1} \sF}{LE[0,T]} \ \lesssim \ 
		\lp{(w_{ln})^{-1}r^{-1}(-\sDelta)^{-\frac{1}{2}}(\ud\phi,\ud\phi^\star)}{LE[0,T]} + 
		\lp{(I,J)}{LE^*[0,T]} \ , \label{div_curl_est}
\end{equation}
where $-\sDelta=\sd^\star d$ is the scalar Laplace-Beltrami operator on $\mathbb{S}^2$.
Here the components of $\sF$ are taken in a normalized basis $(\partial_a, r^{-1}\partial_A)$.
\end{proposition}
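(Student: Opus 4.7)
The plan is to treat this as a purely elliptic Hodge-theoretic reduction on each sphere $S_{t,r}$, with $(t,r)$ playing the role of parameters; there is no time-dependent analysis at all in this step. For each fixed $a \in \{0,1\}$ and each $(t,r)$, I view $\sF_a := \sF_{a\,\cdot}$ as a $1$-form on $\mathbb{S}^2$. Since $H^1(\mathbb{S}^2)=0$, there are no nontrivial harmonic 1-forms, so the Hodge decomposition reads $\sF_a = \sd P_a + \star_\delta \sd Q_a$ for uniquely determined mean-zero scalars $P_a, Q_a$ on $\mathbb{S}^2$. Using $\sd^\star \sd = -\sDelta$ on functions (the paper's convention) together with the annihilation identities $\sd^\star(\star_\delta \sd g)=0$ and $\star_\delta \sd(\sd f)=0$ on $\mathbb{S}^2$, I solve
\[
P_a = (-\sDelta)^{-1} \sd^\star \sF_a, \qquad Q_a = -(-\sDelta)^{-1} \star_\delta \sd \sF_a
\]
(signs depending on the convention $\star_\delta^2 = -1$ on 1-forms in 2D Riemannian).

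I then substitute the algebraic expressions for the divergence and curl of $\sF_a$ on $\mathbb{S}^2$ coming from \eqref{dphi3} and \eqref{dphi1} respectively, namely
\[
\sd^\star \sF_a \ = \ (\star_h \ud \phi^\star)_a + r^2 J_a, \qquad \star_\delta \sd \sF_a \ = \ (\ud\phi)_a + r^2(\star_h I)_a.
\]
By hypothesis \eqref{spin_zero_moment} the mean on $\mathbb{S}^2$ of each of $\phi, \phi^\star, I_a, J_a$ vanishes, and hence so does the mean of every right-hand side above; consequently the inverse $(-\sDelta)^{-1}$ is well-defined on each piece. Using the standard identity $\|\sd(-\sDelta)^{-1}f\|_{L^2(\mathbb{S}^2)} = \|(-\sDelta)^{-1/2}f\|_{L^2(\mathbb{S}^2)}$ (and the same for the co-exact part, since $\star_\delta$ is an $L^2$ isometry), together with the spectral gap $-\sDelta \geqslant 2$ on mean-zero functions, I obtain the pointwise-in-$(t,r)$ bound on $\mathbb{S}^2$
\[
\|\sF_a\|_{L^2(\mathbb{S}^2)} \ \lesssim \ \|(-\sDelta)^{-1/2}(\ud\phi, \ud\phi^\star)\|_{L^2(\mathbb{S}^2)} + r^2 \|(I, J)\|_{L^2(\mathbb{S}^2)}.
\]

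The remainder is bookkeeping of $r$-weights. Components in the normalized basis $(\partial_a, r^{-1}\partial_A)$ differ from the coordinate components by a factor of $r^{-1}$, and $dV_g \sim r^2\, dt\, dr\, dV_{\mathbb{S}^2}$. Squaring the pointwise bound, multiplying by $r^{-2} \cdot r^{-1}(w_{ln})^{-2}$, and integrating against $dV_g$ over $\mathcal{R}_j \times [0,T]$, the $r$-factors conspire correctly: the LHS becomes (the squared contribution on $\mathcal{R}_j$ to) $\|(w_{ln})^{-1}\sF\|_{LE[0,T]}^2$ in the normalized basis; the first term on the right produces exactly $\|(w_{ln})^{-1} r^{-1}(-\sDelta)^{-1/2}(\ud\phi, \ud\phi^\star)\|_{LE[0,T]}^2$; and the source contribution becomes $\|r^{1/2}(w_{ln})^{-1}(I,J)\|_{L^2(dV_g)(\mathcal{R}_j\times[0,T])}^2$, which is dominated by $\|(I,J)\|_{LE^*[0,T]}^2$ since $(w_{ln})^{-1}\leqslant 1$ and $\sup_j \leqslant \sum_j$. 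Taking the supremum in $j$ and a square root yields \eqref{div_curl_est}.

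There is no genuine obstacle here — no component-by-component computation, no spacetime estimate — only careful tracking of the Hodge sign conventions on $\mathbb{S}^2$, the $r^{-1}$ from normalizing angular components, and matching with the precise $LE, LE^*$ definitions in \eqref{LE_norms}. The mean-zero hypothesis \eqref{spin_zero_moment} is essential both for the existence of $(-\sDelta)^{-1/2}$ on the source terms and for keeping the implicit constant $r$-independent via the spectral gap.
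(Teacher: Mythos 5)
Your proposal is correct and follows essentially the same route as the paper: the paper quotes the double-sided div--curl estimate \eqref{hodge_equiv} for the normalized one-forms $\omega_a=\sF_{a\hat A}$, substitutes the identities \eqref{dphi1}, \eqref{dphi3}, and uses boundedness of $(-\sDelta)^{-\frac{1}{2}}$ on mean-zero functions together with $r^{-1}LE^*\subseteq LE$, which is exactly your Hodge decomposition argument plus the same $r$-weight bookkeeping. The only cosmetic difference is that you decompose $\sF_a=\sd P_a+\star_\delta \sd Q_a$ explicitly and normalize at the end rather than invoking \eqref{hodge_equiv} for the normalized components directly.
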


\begin{proof}
This follows immediately from $L^2$ Hodge estimates on $\mathbb{S}^2$. Recall that the 
operators $\ \sdiv=-\sd^\star$ and $\ \scurl=\star_\delta \sd$ on $T^*(\mathbb{S}^2)$
give rise to a double sided singular integral $L^2$ estimate:
\begin{equation}
		\lp{\omega}{L^2(\mathbb{S}^2)} \ \approx \ 
		\lp{(-\sDelta)^{-\frac{1}{2}}\, \sdiv \omega}{L^2(\mathbb{S}^2)} 
		+ \lp{(-\sDelta)^{-\frac{1}{2}}\, \scurl \omega}{L^2(\mathbb{S}^2)} \ . \label{hodge_equiv}
\end{equation}
Setting $(\omega_a)_A = \sF_{a\hat{A}}$ to be a one form on $\mathbb{S}^2$ indexed by 
$a=0,1$ with values the normalized components of $\sF$, we may write \eqref{dphi1} and \eqref{dphi3}
as:
\begin{equation}
		\sdiv \omega \ = \ -\ r^{-1}\star_h \ud \phi^\star - rJ \ ,
		\qquad
		\scurl \omega \ = \ r^{-1}\ud\phi + r\star_h I \ . \notag
\end{equation}
The estimate \eqref{div_curl_est} follows at once from this, the boundedness of $(-\sDelta)^{-\frac{1}{2}}$
on $L^2(\mathbb{S}^2)$ functions with zero average, and the inclusion $r^{-1}LE^*\subseteq LE$.
\end{proof}

\ret

\section{Second Order Wave Equations and Local Energy Decay Estimates. 
Proof of the First Order Decay  Estimate}\label{2nd_red_sect}

In this section we reduce the estimation of the first term on RHS \eqref{div_curl_est}
to the following Theorem, which one may view as the main technical result of 
the paper. To state it we introduce the following operator, which we call the 
\emph{spin-zero wave equation}:
\begin{equation}
		\Box^0 \ = \ \Box_h + r^{-2}\sDelta \  =\ \nabla^a\nabla_a + r^{-2}\snabla^A\snabla_A
		\ . \label{spin_0_box}
\end{equation}
For this operator we have:

\begin{theorem}[Inverse angular gradient local energy decay estimates]\label{main_LE_thm1}
Let $\phi(x^a,x^A)$ be a scalar function defined 
on the slab $[0,T]\times[r_0,\infty)\times\mathbb{S}^2$.
In addition let $G_{a}(x^a,x^A)$ be a one form in the $x^a$ variables, whose coefficients
also depending 
on $x^A \in \mathbb{S}^2$, such that ${\star}_h \ud G=K$. 
Finally let $H(x^a,x^A)$ be another scalar function
Suppose that all of these objects obey the moment condition:
\begin{equation}
		\int_{\mathbb{S}^2}\phi(x^a) dV_{\mathbb{S}^2} 
		\ = \ \int_{\mathbb{S}^2}G_b (x^a) dV_{\mathbb{S}^2}
		\ = \ \int_{\mathbb{S}^2}H(x^a) dV_{\mathbb{S}^2}
		\ \equiv \ 0 \ , \qquad b=0,1 \ , \label{zero_moment}
\end{equation}
throughout $[0,T]\times[r_0,\infty)$. If $\phi,G,H$ are all supported in $\{r \leqslant CT\}$ and 
\begin{equation}
		\Box^0\phi \ = \ \nabla^a G_a + H \ , \label{spin_zero_eq}
\end{equation}
then one has the local energy decay type estimate:
\begin{equation}
\begin{split}
		\lp{(w_{ln})^{-1} r^{-1}\big(\ud(-\sDelta)^{-\frac{1}{2}}\phi,r^{-1}\phi\big)}{LE[0,T]}
		\ \lesssim  & \
		\lp{(-\sDelta)^{-\frac{1}{2}}r^{-1}\big(\ud \phi(0)- G(0)\big)}{L^2(dV_{g|_{t=0}})}
		+\lp{r^{-2}\phi(0)}{L^2(dV_{g|_{t=0}})} \\
		& + \lp{w_{ln} r^{-1}\big(
		r^{-1}G,(-\sDelta)^{-\frac{1}{2}}K, (-\sDelta)^{-\frac{1}{2}}H\big)}{LE^*[0,T]}
		\ . 
\end{split}
\label{main_est1}
\end{equation}
Here $w_{ln}(r)= (1+\big|\ln|r-r_\mathcal{T}|\big|)/(1+|\ln(r)|)$ as usual.
\end{theorem}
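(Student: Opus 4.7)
The strategy is to eliminate the divergence source $\nabla^a G_a$ in \eqref{spin_zero_eq} by means of a gauge transformation based on a 2D Hodge decomposition of $G$ on the $(t,r)$ plane (with $x^A \in \mathbb{S}^2$ as parameters), thereby reducing to a spin-zero wave equation with purely scalar source to which the preliminary LED estimate of Section \ref{LE_section1} applies. This realizes the \emph{space-time Coulomb gauge} alluded to in the introduction.

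Concretely, first solve $\Box_h B = -K$ in the $(t,r)$ plane with zero Cauchy data at $t=0$; the hypothesis $\star_h \ud G = K$ then makes $G - \star_h \ud B$ closed in $(t,r)$, so by the 2D Poincar\'e lemma it equals $\ud A$ for some scalar $A$, which we normalize by $\int_{\mathbb{S}^2} A\, dV_{\mathbb{S}^2} = 0$. The moment hypotheses \eqref{zero_moment} ensure $A,B$ also have zero spherical mean, and a short computation using antisymmetry of $\epsilon_{ab}$ gives $\Box_h A = \nabla^a G_a$. Now set $\psi = \phi - A$; using $\Box^0 = \Box_h + r^{-2}\sDelta$ one finds $\Box^0 \psi = H - r^{-2}\sDelta A$, a purely scalar source. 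Since $\Box^0$ commutes with $\sDelta$, apply the preliminary scalar LED from Section \ref{LE_section1} to $(-\sDelta)^{-1/2}\psi$; noting that $(-\sDelta)^{-1/2}r^{-2}\sDelta A = -r^{-2}(-\sDelta)^{1/2}A$, the bound for the LHS of \eqref{main_est1} restricted to $\psi$ is reduced to controlling $\lp{w_{ln}r^{-3}(-\sDelta)^{1/2}A}{LE^*}$ together with $\lp{w_{ln}r^{-1}(-\sDelta)^{-1/2}H}{LE^*}$.

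The remaining task is to estimate $(-\sDelta)^{1/2}A$ by $G$ directly. The critical observation is that $\ud A = G - \star_h \ud B$ holds \emph{pointwise} on each angular fiber, so the spherical Poincar\'e inequality \eqref{S2_poincare} applied to the zero-mean function $A$ trades its angular derivative for $G$ plus $\ud B$. A second application of the Section \ref{LE_section1} LED to $\Box_h B = -K$ (viewed as a spin-zero equation with zero angular term, after inverse-angular commutation) controls $\ud B$ by $(-\sDelta)^{-1/2}K$ in the appropriate weighted norm. Combining these yields the target source terms $\lp{w_{ln}r^{-2}G}{LE^*}$ and $\lp{w_{ln}r^{-1}(-\sDelta)^{-1/2}K}{LE^*}$ on the RHS of \eqref{main_est1}. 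Reassembling $\phi = \psi + A$ and noting $\ud\psi(0) = (\ud\phi(0) - G(0)) + \star_h \ud B(0)$, the data term collapses to the gauge-invariant quantity $\ud\phi(0) - G(0)$ plus a $B(0)$-contribution bounded by $K$ as above.

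The main obstacle is performing the bounds for $A$ and $B$ uniformly in the weighted $LE, LE^*$ norms, especially across the trapped set $r = r_\mathcal{T}$ where the logarithmic weight $w_{ln}$ becomes singular. The auxiliary 2D wave equations $\Box_h A = \nabla^a G_a$ and $\Box_h B = -K$ lack the angular Laplacian present in $\Box^0$, so the scalar LED of Section \ref{LE_section1} must be applied to them carefully, and the Poincar\'e trade above must be executed without loss in weighted norms. Managing the interaction between the pointwise (Hodge) relation $\ud A = G - \star_h \ud B$ and the non-local $(-\sDelta)^{1/2}$ at the spatial weights dictated by the trapping geometry is the technical heart of the full proof.
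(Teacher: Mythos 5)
Your overall strategy (a ``gauge transformation'' removing the divergence source, followed by the scalar estimate \eqref{box0_LE} applied to $(-\sDelta)^{-\frac12}$ of the corrected function) matches the paper's plan, but your construction of the corrector has two gaps that I do not see how to repair. First, the angular-derivative bookkeeping fails: after your reduction the residual source is $r^{-2}\sDelta A$, so you must bound $(-\sDelta)^{\frac12}A$ (equivalently $\sd A$) by the data. But the Hodge relation $\ud A = G - \star_h \ud B$ only determines the $(t,r)$-derivatives of $A$; the spherical Poincare inequality \eqref{S2_poincare} goes the other way (it bounds $A$ by $\sd A$, not $\sd A$ by $A$ or by $\ud A$), so it cannot ``trade'' the angular derivative of $A$ for $G$ and $\ud B$. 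To control $\sd A$ from your relation you would need $\sd G$ (one full angular derivative more than the hypotheses provide, which give only $G$ and $(-\sDelta)^{-\frac12}\ud G$), and moreover $A$ is only determined up to an additive $c(x^A)$ which the zero-spherical-mean normalization does not fix, while $r^{-2}\sDelta c$ changes the residual source. This is precisely why the paper builds the corrector $\aphi$ by a \emph{space-time elliptic} solve, $\aphi_0=(-\Delta_{r,t}-\sDelta)^{-1}\ud^\star G$ after dyadic localization and rescaling: the joint inversion in $(t,r)$ and the angular variables makes $(-\sDelta)^{\frac12}\nabla_{r,t}(-\Delta_{r,t}-\sDelta)^{-1}$ and $\sDelta(-\Delta_{r,t}-\sDelta)^{-1}$ zero-order singular integrals, bounded on $L^2$ with the $A_p$ weight $w_{ln}$, which is exactly the gain of one angular derivative your fiberwise construction cannot produce.

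Second, there is a norm mismatch that prevents closing the argument uniformly in $T$. The residual source of the corrected equation must be placed in the $\ell^1$-type dual norm $LE^*_0$ in order to invoke \eqref{box0_LE}, and this is what Proposition \ref{p:ell} delivers because the construction is elliptic and local in dyadic space-time regions. Your $A$ and $B$ are produced by fiberwise $(1+1)$ wave equations $\Box_h B=-K$, $\Box_h A=\nabla^aG_a$; even granting a local energy decay estimate for these (the flat $1$D wave operator has only weak such bounds, cf.\ \eqref{sph_RW_LE}), its output controls $\ud B$, $\ud A$ in the $\ell^\infty$-type $LE$ norm, not in $LE^*$, and $LE\not\subseteq LE^*$ uniformly in $T$. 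Relatedly, the initial-data terms for $\psi=(-\sDelta)^{-\frac12}(\phi-A)$ include the undifferentiated and angular pieces $r^{-1}(\phi-A)(0)$, which your zero-Cauchy-data normalization does not control; the paper handles the analogous terms through the trace estimate \eqref{trace_est}, using the support condition $r\leqslant CT$, applied to the space-time bounds of Proposition \ref{p:ell}. So the statement you are relying on for $A$ and $B$ is not merely technically delicate, it is of the wrong strength, and the elliptic (Coulomb-gauge type) construction is the essential missing idea.
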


The proof of this theorem will occupy the second portion of the paper. First, we
use it to prove Theorem \ref{main_thm_red}. In light  of the energy 
estimates \eqref{maxwell_energy} the main step is to show the following:

\begin{proposition}[Local energy decay estimates for  spin-zero components]\label{phi_est_prop}
Let $\sF$, $\phi,\phi^\star$, and $I,J$ satisfy the assumptions of Proposition \ref{div_curl_prop},
and in addition suppose each of these quantities is supported in the region $r<CT$ for
$C$ a sufficiently large fixed constant. Then one has the uniform (in $T$) bound:
\begin{multline}
		\lp{(w_{ln})^{-1} r^{-1}(-\sDelta)^{-\frac{1}{2}}(\ud\phi,\ud\phi^\star)}{LE[0,T]} 
		+ \lp{(w_{ln})^{-1} r^{-2}(\phi,\phi^\star)}{LE[0,T]}
		\ \lesssim \
		\lp{F|_{t=0}}{L^2(dV_{g|_{t=0}})}\\ 
		+ \lp{w_{ln} (I,J)}{LE^*[0,T]} + \lp{\big(\star_h I_0 (r_0),\star_h J_0 (r_0)\big)}
		{L^2(dV_{\mathbb{S}^2} dt )[0,T]} \ . \label{spin_zero_LE}
\end{multline} 
\end{proposition}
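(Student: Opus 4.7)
The strategy is to apply Theorem \ref{main_LE_thm1} separately to each of the spin-zero scalars $\phi$ and $\phi^\star$. By the Hodge duality $\phi\leftrightarrow -\phi^\star$, $\sF\leftrightarrow \sF^\star$, $I\leftrightarrow -J$ manifest in Lemma \ref{grad_iden_lemma}, the two cases are symmetric, so it is enough to carry out the argument for $\phi$, with the estimate for $\phi^\star$ following from the dual system.

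The first and main step is to derive from the first-order identities \eqref{dphi1}--\eqref{dphi3} a scalar wave equation for $\phi$ of the form
\[ \Box^0 \phi \ = \ \nabla^a G_a + H \]
with $G_a, H$ explicit in $I, J$, as required by Theorem \ref{main_LE_thm1}. I would apply $\ud^\star$ to \eqref{dphi1}, using $\ud^\star \ud = -\Box_h$ on scalars together with the commutation $[\ud^\star, \star_\delta \sd]=0$ (which acts on independent factors of $\mathcal{M}\approx \mathbb{R}^2\times \mathbb{S}^2$), to obtain $\Box_h \phi = -\star_\delta \sd(\ud^\star \sF) + \ud^\star(r^2 \star_h I)$. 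To reconstitute the full spin-zero operator $\Box^0 = \Box_h + r^{-2}\sDelta$, one adds $r^{-2}\sDelta\phi$, and then uses the Hodge decomposition of $\sF_a$ on $\mathbb{S}^2$ -- whose divergence and curl are exactly determined by $\phi, \phi^\star, I, J$ via \eqref{dphi1} and \eqref{dphi3} -- to eliminate all occurrences of $\sF$ on the right-hand side. The crucial feature, stressed in the paper as a ``remnant of the tensorial character of the original first order system'', is that these source contributions reorganize into the first-order divergence form $\nabla^a G_a + H$ with $G_a \sim r^2(\star_h I)_a$ (plus lower-order terms) and $H$ a combination of $\sd^\star$ of the angular components of $I, J$. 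The auxiliary relation $\star_h \ud G = K$ required by Theorem \ref{main_LE_thm1} then follows from the continuity equation $d \star I = 0$ already computed in \eqref{charge_divergence}, which delivers $K \sim -\sd^\star I$.

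Next, I would verify the remaining hypotheses of Theorem \ref{main_LE_thm1}: the zero-moment conditions \eqref{zero_moment} on $\phi, G_b, H$ are inherited from those on $\phi, \phi^\star, I_a, J_a$ postulated in Proposition \ref{div_curl_prop}; the support condition $r \leq CT$ is given. Applying the theorem bounds the $\phi$-part of the LHS of \eqref{spin_zero_LE} by initial data norms and source norms of $G, H, K$, which I would match against the RHS of \eqref{spin_zero_LE} as follows. Since $\phi = r^2 F_{\hat A\hat B}$ up to a constant in the normalized angular frame, $\lp{r^{-2}\phi(0)}{L^2(dV_{g|_{t=0}})} \lesssim \lp{F(0)}{L^2}$; and by \eqref{dphi1}, $\ud\phi(0) - G(0) \sim \star_\delta \sd \sF(0)$, so the $\mathbb{S}^2$ Hodge equivalence \eqref{hodge_equiv} gives $\lp{(-\sDelta)^{-1/2}r^{-1}(\ud\phi - G)(0)}{L^2} \lesssim \lp{r^{-1}\sF(0)}{L^2} \lesssim \lp{F(0)}{L^2}$ using that $r^{-1}\sF_{aA} = \sF_{a\hat A}$ in the normalized frame. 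The bulk source norms $\lp{w_{ln}r^{-1}(r^{-1}G, (-\sDelta)^{-1/2}K, (-\sDelta)^{-1/2}H)}{LE^*[0,T]}$ are then controlled by $\lp{w_{ln}(I,J)}{LE^*[0,T]}$ via the identifications above and the $L^2(\mathbb{S}^2)$-boundedness of $(-\sDelta)^{-1/2}$ on mean-zero functions. Finally, the boundary trace $\lp{(\star_h I_0(r_0), \star_h J_0(r_0))}{L^2(dV_{\mathbb{S}^2}dt)[0,T]}$ arises when the divergence structure $\nabla^a G_a$ is integrated by parts against the dispersive multipliers at the inner spatial boundary $r = r_0$ of the slab $[r_0, \infty)$, picking up a trace of $G_1 \sim r^2(\star_h I)_1 \sim \star_h I_0$ (and the corresponding $\star_h J_0$ term from the analogous computation for $\phi^\star$).

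The main obstacle is the first step: producing the wave equation in the first-order divergence form $\nabla^a G_a + H$ with $(G, H, K)$ obeying the zero-moment and curl-continuity conditions. The naive computation would yield a right-hand side with second-derivative sources, which would spoil the sharp $L^2$ scaling; reorganizing it into first-order form demands careful bookkeeping of the product tensor structure on $\mathbb{R}^2 \times \mathbb{S}^2$, and uses both Maxwell equations \eqref{dual_dsource_Maxwell} together with the continuity equation \eqref{charge_divergence}.
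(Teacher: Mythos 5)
Your overall strategy is the same as the paper's: derive a divergence-form wave equation $\Box^0\phi=\nabla^aG_a+H$ with $G\sim -r^2\star_h I$, $H\sim-\star_\delta\sd J$, $K=\sd^\star I$ from \eqref{charge_divergence}, then apply Theorem \ref{main_LE_thm1} and match the data and source norms exactly as you do (indeed $r^{-2}\phi=F_{\hat A\hat B}$, $\ud\phi(0)-G(0)=\star_\delta\sd\sF(0)$, and \eqref{hodge_equiv} handle the right-hand side). However, your key step -- the derivation of the wave equation -- does not work as sketched. Applying $\ud^\star$ to \eqref{dphi1} and then eliminating $\sF$ using only its angular divergence and curl (which is all that \eqref{dphi1} and \eqref{dphi3} provide) is circular: writing $\sF_{a\cdot}=\sd f_a+\star_\delta\sd g_a$ with $-\sDelta g_a=(\star_\delta\sd\sF)_a=\partial_a\phi+r^2(\star_h I)_a$, one finds $\star_\delta\sd(\ud^\star\sF)=-\nabla^a(\partial_a\phi+r^2(\star_h I)_a)=-\Box_h\phi-\nabla^a(r^2\star_h I)_a$, so the identity $-\Box_h\phi=\star_\delta\sd(\ud^\star\sF)-\ud^\star(r^2\star_h I)$ collapses to the tautology $\Box_h\phi=\Box_h\phi$. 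The point is that \eqref{dphi1}, \eqref{dphi3} carry no information about the $(t,r)$-divergence of $\sF$; to close the equation one must also use the angular (spin-one) components of the Maxwell system $\nabla^\beta F_{A\beta}=J_A$, $\nabla^\beta F^\star_{A\beta}=I_A$. The paper does this implicitly by computing the Hodge Laplacian of $F$ for the conformally rescaled metric $\tilde g=r^{-2}g$ (a direct sum metric, for which the curvature terms in the $AB$ components cancel against the $\mathbb{S}^2$ curvature), and then tracing with $\frac12\epsilon^{AB}$ to obtain \eqref{Box_0_RHS}; this conformal step is also what disposes of the first-order $r^{-1}\partial^ar\,\partial_a$ terms and potential terms that would otherwise obstruct the clean form $\Box^0\phi=\nabla^aG_a+H$. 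Since producing this equation is the heart of the proposition (as you yourself note), the argument as written has a genuine gap there.

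A secondary inaccuracy: the boundary term $\lp{(\star_h I_0(r_0),\star_h J_0(r_0))}{L^2(dV_{\mathbb{S}^2}dt)}$ does not arise from integrating $\nabla^aG_a$ by parts at $r=r_0$ inside this proof. Theorem \ref{main_LE_thm1} is applied as a black box and estimate \eqref{main_est1} contains no boundary contribution, so the paper's proof never generates this term; it is harmless slack on the right-hand side, exploited only later when the estimate is applied on slabs $r>r_*$ and averaged in $r_*$ in the proof of the main theorem. Your proposed mechanism for it is therefore unnecessary (and not something you would actually need to carry out).
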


\begin{proof}[Proof that Theorem \ref{main_LE_thm1} implies Proposition \ref{phi_est_prop}]
This boils down to a direct algebraic calculation, whose point is to derive a second 
order equation of the form \eqref{spin_zero_eq} for $\phi,\phi^\star$, where $G_a$ and $H$ 
can be estimated in terms of $I$ and $J$.

\step{1}{Derivation of the second order equation}
All of our computations here are more easily done with respect 
to a conformal metric. Let $\Omega$ be a weight function and set $\td{g}=\Omega^2 g$.
We denote by $\tstar$ the corresponding Hodge operator, which obeys the identity
$\tstar=\Omega^{4-2p}\star$ on each $\Lambda^p$. In particular notice that the quantity
$F^\star$ is conformally invariant. With respect to the new metric the Maxwell system can
be written as (alternatively):
\begin{align}
		d F \ &= \ -\Omega^{-2}\tstar I \ ,   &d F^\star \ &= \ \Omega^{-2}\tstar J \ , \notag\\
		d^{\tstar} F \ &= \ \Omega^{-2} J \ ,  
		&d^{\tstar} F^\star \ &= \ \Omega^{-2} I \ . \notag
\end{align}
Combining these formulas we have:
\begin{equation}
		\Box^{hodge} F \ = \ d^{\tstar}(\Omega^{-2} \tstar I) - d(\Omega^{-2} J) \ , \qquad
		\Box^{hodge} F^{\star} \ = \ -d^{\tstar}(\Omega^{-2} \tstar J) - d(\Omega^{-2} I) \ , \notag
\end{equation}
where $\Box^{hodge}=-(d d^{\tstar} + d^{\tstar}d)$ is the Hodge Laplacian of $\td{g}$. 

In terms of the connection $\tnabla$ of $\td{g}$ we may write $\Box^{hodge}$ as
follows:
\begin{align}
		\Box^{hodge}F_{\alpha\beta} \ &= \ \tnabla^{\gamma}(\tnabla_{\alpha}F_{\beta\gamma}
		+   \tnabla_{\gamma} F_{\alpha\beta }+ \tnabla_{\beta} F_{\gamma\alpha}) - 
		\tnabla_\alpha \tnabla^\gamma F_{\beta\gamma} + 
		\tnabla_\beta \tnabla^\gamma F_{\alpha\gamma} \ , \notag\\
		&= \ \tnabla^\gamma\tnabla_\gamma F_{\alpha\beta} 
		+ [\tnabla_\alpha,\tnabla_\gamma]F^\gamma_{\ \, \beta}
		+ [\tnabla_\beta,\tnabla_\gamma] F_\alpha^{\ \, \gamma} \ , \notag\\
		&=\ \Box_{\td{g}}F_{\alpha\beta} - \td{R}_{\alpha}^{\ \ \gamma}F_{\gamma\beta}
		- \td{R}_{\beta}^{\ \ \gamma}F_{\alpha\gamma}
		- \td{R}_{\alpha\beta}^{\ \ \ \gamma\delta}F_{\gamma\delta} \ , \notag
\end{align}
where $\Box_{\td{g}}$ is the covariant wave equation acting on two-forms, 
$\td{R}_{\alpha\beta\gamma\delta}$ is the Riemann curvature tensor of $\td{g}$, and
$\td{R}_{\alpha\beta}
=\td{g}^{\gamma\delta}\td{R}_{\alpha\gamma\delta\beta}$ is its Ricci curvature.
Here our curvature convention is $[\nabla_\alpha,\nabla_\beta]\omega_\gamma=
R_{\alpha\beta\ \, \gamma}^{\ \ \ \, \delta}\omega_\delta$ for one-forms.

We now choose $\Omega$ in order to simplify the form of $\Box^{hodge}$ above. This is done
by choosing $\Omega=r^{-1}$ so $\td{g}$ becomes a pure 
direct sum metric instead of a warped product. For this new metric the curvatures diagonalize into
pure $a,b$ and $A,B$ components. It is the latter which is important for us here, which is simply
 the Riemann and Ricci curvature of $\mathbb{S}^2$, that is: 
\begin{equation}
		\td{R}_{ABCD} \ = \ \delta_{AD}\delta_{BC} - \delta_{AC}\delta_{BD} \ , \qquad
		\td{R}_{AB} \ = \ \delta_{AB} \ . \notag
\end{equation}
For the D'Alembertian this gives the simple formula:
\begin{equation}
		\Box^{hodge}F_{AB} \ =\  \Box_{\td{g}}F_{AB} \ . \notag
\end{equation}
Tracing this with respect to $\frac{1}{2}\epsilon^{AB}$ gives us:
\begin{equation}
		\Box_{\td{g}}\phi\ = \ \frac{1}{2}\epsilon^{AB}  d^{\tstar}(r^2 \tstar I)_{AB}
		- \frac{1}{2}\epsilon^{AB} d(r^2 J)_{AB} \ , \qquad
		\Box_{\td{g}}\phi^\star\ = \ \frac{1}{2}\epsilon^{AB}  d^{\tstar}(r^2 \tstar J)_{AB}
		+ \frac{1}{2}\epsilon^{AB} d(r^2 I)_{AB} \ , \notag
\end{equation}
where $\Box_{\td{g}}=\tnabla^a\tnabla_a + \sDelta=r^{2}\Box^0$ is now the scalar covariant wave equation of
$\td{g}$.

It remains to compute the traces on the two RHS of the last line above. For the exterior derivative terms
we immediately have:
\begin{equation}
		\frac{1}{2}\epsilon^{AB} d(r^2 I)_{AB} \ = \ r^2 \star_\delta \sd I \ , \qquad
		\frac{1}{2}\epsilon^{AB} d(r^2 J)_{AB} \ = \ r^2 \star_\delta \sd J \ . \notag
\end{equation}
For the co-derivative expressions we compute:
\begin{equation}
		d^{\tstar}(r^2 \tstar I)_{AB} \ = \  \star d(r^2 I)_{AB} \ = \ 
		r^2 \epsilon_{AB}\cdot \frac{1}{2}\epsilon^{ab}\ud(r^2I)_{ab} 
		\ = \ r^2 \epsilon_{AB} \star_h \ud (r^2I) \ = \ -r^2 \epsilon_{AB} \nabla^a (r^2\star_h I)_a 
		\ , \notag
\end{equation}
with an identical formula for $d^{\tstar}(r^2 \tstar J)_{AB}$. All together this yields:
\begin{equation}
		\Box^0\phi \ = \ -\nabla^a(r^2\star_h I)_a - \star_\delta \sd J \ , \qquad
		\Box^0\phi^\star \ = \ -\nabla^a(r^2\star_h J )_a + \star_\delta \sd I \ . \label{Box_0_RHS}
\end{equation}

\step{2}{Application of estimate \eqref{main_est1}}
It suffices to treat the first equation on line \eqref{Box_0_RHS} as the second is of the same form.
To set up for estimate \eqref{main_est1} we define:
\begin{equation}
		G \ := \ -r^2 \star_h I \ , \qquad H \ := \ -\star_\delta \sd J \ , \qquad
		K \ := \ \sd^\star I \ . \notag
\end{equation}
Using the continuity equation \eqref{charge_divergence} we have $\star_h dG=K$ as required.
Notice also the the moment conditions \eqref{zero_moment} are all satisfied thanks to
\eqref{spin_zero_moment}. 

It remains to bound the terms occurring on the RHS of \eqref{main_est1} for these choices in
terms of RHS \eqref{spin_zero_LE}. For the terms $H$ and $K$ this is an immediate 
consequence of \eqref{hodge_equiv}. Notice that the extra factor $r^{-1}$ disappears when one
switches from the $\mathbb{S}^2$ basis $e_A$ to the regular basis $e_{\hat{A}}=r^{-1}e_A$.

Next, the estimate for the one form $G$ is also immediate due to the truncation condition which
implies $(\sqrt{rT})^{-1}\lesssim r^{-1}$. Likewise we have $r^{-2}\phi = F_{\hat{A}\hat{B}}$ in an
ordered $g$ orthonormal basis $e_{\hat A},e_{\hat B}$ of $S_{t,r}$, which gives the desired
bound for the undifferentiated initial data on RHS \eqref{main_est1}.

Finally, we dispense with the gradient terms in the initial data on RHS \eqref{main_est1}. Using
equations \eqref{dphi1} above and the definition of $G$ we have:
\begin{equation}
		\ud\phi(0) - G(0) \ = \ \star_\delta \sd\sF(0) \ , \notag
\end{equation}
and the desired $L^2$ bound follows again from \eqref{hodge_equiv} and writing the components
of $\sF$ in a regular basis.
\end{proof}


\subsection{Proof of the main theorem}

We are now ready to show Theorem \ref{main_thm_red}, which follows easily
by combining estimates \eqref{maxwell_energy}, \eqref{div_curl_est}, and \eqref{spin_zero_LE}.

In order to use \eqref{spin_zero_LE} we must first  dispense with estimate \eqref{main_thm_est_red}
in the region $r>CT$ for some sufficiently large $C>0$. 
To do this we will use \eqref{maxwell_energy} to prove \eqref{main_thm_red}
in a slightly larger region, namely $r>Ct$. The desired estimate for $0\leqslant t\leqslant 1$ follows
easily from local considerations. For larger values of $t$ we can  integrate the LHS of
estimate \eqref{maxwell_energy} over a family of uniformly space-like hyper-surfaces such as
$r=Ct$ which  upon bootstrapping the first term on RHS \eqref{maxwell_energy} gives:
\begin{equation}
		\lp{F}{LE(r>Ct)[0,T]} \ \lesssim \ \lp{F(0)}{L^2(dV_{g|_{t=0}})} 
		+ \lp{(I,J)}{LE^*(r>Ct)[0,T]}
		\ . \label{ext_LE}
\end{equation}
Note that this estimate only uses the simple bound $\lp{F}{LE(r>Ct)[1,T]} \lesssim 
\sup_{1\leqslant t\leqslant T} \lp{F|_{\mathcal{C}_t}}{L^2(dV_{g|_{\mathcal{C}_t}})}$
where $\mathcal{C}_t$ is the family of hypersurfaces given by $r=Ct$.

\begin{figure}[!ht]
	\scalebox{.75}{\includegraphics{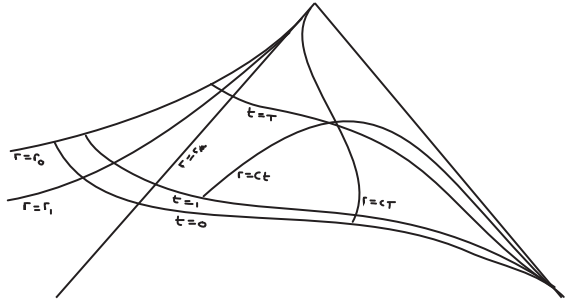}}
	\caption{Penrose diagram of the regions described in the proof.}\label{penrose}
\end{figure}

It remains to estimate $F$ in the region $r<Ct$ which  we enlarge to the region $r<CT$.
After truncating $F_{\alpha\beta}$  by a smooth cutoff of the form $\chi(r/T)$ 
 we  can immediately reduce matters to proving \eqref{main_thm_est_red}
for fields  supported in $r<2CT$, as the error generated
by differentiation of the cutoff $\chi$ on the RHS of \eqref{main_thm_est_red}
is handled by $T^{-1}\lp{\chi'(r/T)F}{LE^*[0,T]}\lesssim \lp{F}{LE(r>Ct)[0,T]}$ 
and then using \eqref{ext_LE}.

To proceed further we average estimates \eqref{div_curl_est} and \eqref{spin_zero_LE}
on the slabs $[0,T]\times \{r>r_*\}$ for $r_*$ in the band $r_0<r_*<r_1<r_M$. The purpose of this averaging
is  simply to trade the resulting error terms $\lp{\big(\star_h I_0 (r_*),\star_h J_0 (r_*)\big)}
{L^2(dV_{\mathbb{S}^2} dt )[0,T]}$ on RHS \eqref{spin_zero_LE} for 
$\lp{(I,J)}{LE^*(r>r_0)[0,T]}$. Combining 
the result with estimate
\eqref{maxwell_energy} yields (again for $F$ supported where $r<2CT$):
\begin{equation}
		 \lp{F(r_1)}{L^2(dV_{g|_{r=r_1}})} +
		 \lp{(w_{ln})^{-1} F}{LE(r>r_1)[0,T]} \ \lesssim \ \lp{F(0)}{L^2(dV_{g|_{t=0}})} 
		+ \lp{w_{ln}(I,J)}{LE^*(r>r_0)[0,T]}
		\ . \notag
\end{equation}
Finally, to fill in the remaining quantity $\lp{(w_{ln})^{-1} F}{LE(r_0<r<r_1)[0,T]}$
it suffices to integrate the LHS of estimate \eqref{maxwell_energy} 
over the family of uniformly space-like hypersurfaces
$r=r_*$ for $r_*$ in the band  $r_0\leqslant r_*\leqslant r_1$.

\ret

\section{Local Energy Decay for Spin-Zero Fields Part I: A Preliminary Estimate}\label{LE_section1}

In this section we begin our estimates for the spin-zero wave equation
\eqref{spin_0_box}.  It is best to think of this as a $(1+1)$ wave
equation plus a potential, which is literally true after decomposition
into spherical harmonics. There is a natural volume form $dV=dV_h \wedge
dV_{\mathbb{S}^2}$ for which $\Box^0$ becomes self adjoint.  Here
$dV_h=\sqrt{|h|}dx^0 \wedge dx^1$ and $dV_{\mathbb{S}^2}$ is the
standard volume on $\mathbb{S}^2$. There is also a natural (but not
exactly conserved) energy type norm:
\begin{equation}
		E( \phi[t]) \ = \ \int_{[r_0,\infty]\times\mathbb{S}^2 } 
		( \phi_t^2 + \phi_r^2 + r^{-2}|\sd \phi(t)|^2) dr dV_{\mathbb{S}^2} \
		, \qquad |\sd \phi|^2 \ = \ g^{AB}\snabla_A\phi\snabla_B\phi  \ , \notag
\end{equation}
where $\snabla$ is the gradient on $\mathbb{S}^2$. Here we take $(t,r)$ to be any regular set
of coordinates as in Definition \ref{black_hole_defn}.
We also set up local energy decay norms as follows:
\begin{equation}
		\lp{\phi}{LE_0} \ = \ \sup_{j\geqslant 0} 2^{-\frac{j}{2}} \lp{\chi_j \phi}{L^2(dV)} \ , 
		\qquad \lp{G}{LE^*_0} \ = \ \sum_{j\geqslant 0} 2^{\frac{j}{2}} \lp{\chi_j G}{L^2(dV)}
		\ \notag ,
\end{equation}
for some overlapping set dyadic cutoffs $\chi_j(r)$. Here   the integrals
are implicitly constrained to the 
region  $r\geqslant r_0$ which we assume is covered by $j\geqslant 0$. 
The main result of this section can now be written as:

\begin{theorem}[Local energy decay for $\Box^0$]\label{Main_LE_prop}
Let $\Box^0\phi=G$, then one has:
\begin{equation}
		\lp{\partial_r\phi }{LE_0[0,T]}
		+ \lp{(w_{ln})^{-1}(\partial_{t}\phi , r^{-1}\sd \phi)}{LE_0[0,T]}  \ \lesssim \
		E^\frac{1}{2}(\phi[0]) + \lp{w_{ln}G}{LE^*_0[0,T]} \ , \label{box0_LE}
\end{equation}
where $w_{ln}(r)=(1+ \big|\ln|r-r_\mathcal{T}|\big|)/(1+|\ln(r)|)$, 
and where  we assume $\la \partial_r , \partial_t\ra_h  =0$ at  $r=r_\mathcal{T}$. 
\end{theorem}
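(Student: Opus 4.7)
The plan is to decompose $\phi$ into spherical harmonics $\phi_\ell$, so that $\Box^0$ becomes for each mode a $(1+1)$ wave equation with effective potential proportional to $\ell(\ell+1)/r^2$, and then to establish local energy decay in two overlapping regions---a near-horizon region $\{r_0 \leqslant r \leqslant r_M + 2\epsilon_0\}$ via a red-shift vector-field multiplier, and the exterior $\{r \geqslant r_M + \epsilon_0\}$ via a Regge--Wheeler--Morawetz multiplier---and to glue the two using the degeneracy of the $\partial_t$-energy on hypersurfaces $r=\text{const}$ close to the horizon.

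For the near-horizon piece I would construct, following \cite{DR_sch}, a multiplier $X = b_1(r)\partial_t + b_2(r) Y$ in which $Y$ is a red-shift field supported in $\{r \leqslant r_M + 2\epsilon_0\}$ and $b_1, b_2$ are chosen so that $X$ is future-directed timelike and its deformation tensor contracted against the energy-momentum tensor of $\Box^0$ is pointwise nonnegative in $(\partial_t\phi, \partial_r\phi, r^{-1}\sd\phi)$. The assumption $\partial_r g^{rr}|_{r=r_M}\neq 0$ from Definition \ref{black_hole_defn}(\ref{r_M_assumption}) is precisely what produces strict positivity at the horizon. Integration in $[0,T]$ and Cauchy--Schwarz on the source yield a local energy bound on $\{r_0 \leqslant r \leqslant r_M + \epsilon_0\}$ modulo an error localized to the transition annulus $\{r_M+\epsilon_0 \leqslant r \leqslant r_M+2\epsilon_0\}$.

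For the exterior I would introduce a tortoise coordinate $r^*$ adapted to $h$, in which the $(t,r)$ factor of $\Box^0$ becomes a flat $1{+}1$ d'Alembertian modulo first-order commutators, and in which the effective potential $V(r)=-r^{-2}\langle\partial_t,\partial_t\rangle_g$ from Remark \ref{BH_def_rem}(d) satisfies $V'(r_\mathcal{T})=0$, $V''(r_\mathcal{T})<0$. For each harmonic $\phi_\ell$ I would apply a Morawetz multiplier of the form $f(r^*)\partial_{r^*}\phi_\ell + \tfrac{1}{2}f'(r^*)\phi_\ell$, with $f$ odd across $r^*_\mathcal{T}$, strictly monotone, and suitably weighted at infinity. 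The hypothesis $\langle\partial_r,\partial_t\rangle_h|_{r_\mathcal{T}}=0$ ensures that no cross terms spoil the commutator identity at the trapped set; the resulting bulk controls $\partial_{r^*}\phi_\ell$ without degeneracy and $(\partial_t\phi_\ell, \ell r^{-1}\phi_\ell)$ with a weight vanishing at $r_\mathcal{T}$. Normal hyperbolicity then allows the logarithmic redistribution of \cite{MMTT_sch}, converting the vanishing weight into the sharp $w_{ln}^{-1}$ factor paired with the dual $w_{ln}$ on the source, in the critical $L^2$-scaling built into $LE_0$, $LE_0^*$. This is the content of the arguments from \cite{BS_uniform} adapted to sharp norms.

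The principal technical obstacle is the gluing: the exterior multiplier $f(r^*)\partial_{r^*}$ is singular in the regular $(t,r)$ coordinates used at the horizon, so a sharp cutoff at $r=r_M+\epsilon_0$ generates a boundary term on $\{r=r_M+\epsilon_0\}$ whose coefficient blows up as $\epsilon_0\to 0$. The saving mechanism, as indicated in the introduction, is that at $r=r_M$ the vector $\partial_t$ is null, so the $T=-\nabla t$ energy flux through $\{r=r_M+\epsilon_0\}$ degenerates at precisely the matching rate; the degenerate but non-lost energy conservation between $t=0$ and $t=T$ therefore absorbs the exterior boundary term, while conversely the red-shift bulk from the interior estimate absorbs the error produced on the annulus $\{r_M+\epsilon_0 \leqslant r \leqslant r_M+2\epsilon_0\}$ by truncation of the exterior multiplier. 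Adding the two estimates with a small parameter $\epsilon_0$ and reassembling the spherical harmonic pieces yields \eqref{box0_LE}.
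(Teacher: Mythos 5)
Your overall architecture --- red-shift estimates near $r=r_M$ in the spirit of \cite{DR_sch}, a Regge--Wheeler/Morawetz estimate in the exterior with the logarithmically sharp treatment of trapping from \cite{MMTT_sch}, and a gluing that exploits the degeneracy of the energy at the horizon --- is the same as the paper's. However, the gluing step as you describe it would not close. Truncating the exterior multiplier at $r=r_M+\epsilon_0$ produces a term on the timelike cylinder $\{r=r_M+\epsilon_0\}$, and ``degenerate but non-lost energy conservation between $t=0$ and $t=T$'' does not control it: the flux of the $-\nabla t$ current through a timelike hypersurface outside the horizon is sign-indefinite (it contains the cross term $\partial_t\phi\,\partial_r\phi$), and bounding a spacetime integral over such a cylinder by initial energy is itself a local energy decay statement, not a consequence of conservation. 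What is actually needed, and what the paper proves as its second modular estimate \eqref{dt_LE}, is a bulk (Morawetz-type) estimate coming from the multiplier $q(r)\partial_t$ with a \emph{monotone} radial weight, $q'=w_k^2\chi_{[r_1,r_2]}$, which controls $w_k\,L\phi$ near the horizon with the singular weight $w_k\approx(1+\big|\ln|r-r_M|\big|)^{-\frac{k}{2}}|r-r_M|^{-\frac12}$. That weight is forced on you: the truncation error of the Regge--Wheeler estimate, rewritten in regular $(t,r)$ coordinates, is a bulk term measured precisely in this $w_k$-weighted norm and carries a constant of size $|\ln(r_2-r_M)|^{k}$ (since $dr_*\approx (r-r_M)^{-1}dr$ and the dyadic $LE_{RW}$ weights become logarithms of $r-r_M$); your single combined red-shift multiplier yields only unweighted, non-degenerate control of the derivatives near $r_M$, which cannot absorb such an error. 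The paper closes the loop by adding $c\times$\eqref{red_shift_LE}$\,+\,C|\ln(r_2-r_M)|^{k}\times$\eqref{dt_LE}$\,+\,$\eqref{exterior_LE} and then letting $r_2\to r_M$ so that $w_k\cdot(r-r_M)\cdot|\ln(r_2-r_M)|^{k}$ is small; no boundary term at a fixed radius ever appears.

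Two smaller points. The paper does not decompose into spherical harmonics; it works with $|\sd\phi|^2$ directly through the $(1+1)$-reduced energy-momentum tensor. Moreover the zeroth-order part of your Morawetz multiplier, $\tfrac12 f'\phi$, produces the bulk term $-\tfrac14 f'''\phi^2$, which for the spherically symmetric mode cannot be absorbed (no Poincar\'e inequality and no angular term is available); the paper treats the spherical average $\overline\phi$ separately with a pure first-order multiplier (estimate \eqref{sph_RW_LE}), and even for the nonspherical part it must take a singular weight $a$ (a delta function in $a'''$, handled using $\overline\phi=0$) in order to reconcile the decay $a'\approx\langle r_*\rangle^{-k}$ at infinity with the Hardy-type positivity condition. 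These issues are fixable, but they are exactly where the work in the exterior estimate lies, and your outline currently passes over them.
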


\begin{remark}\label{duhamel_remark}
  By Duhamel's principle (note $g^{00}\approx -1$ in regular
  coordinates) one may replace the norm for $G$ on the RHS of
  \eqref{box0_LE} with the following:
\begin{equation}
		\llp{G}{N_0[0,T]} \ = \ \inf_{G_1+G_2=G} \big( \lp{w_{ln}G_1}{LE^*_0[0,T]}
		 + \lp{G_2}{L^1(L^2)[0,T]}\big) \ , \label{split_norm}
\end{equation}
where $L^1(L^2)$ is with respect to $dt$ and $drdV_{\mathbb{S}^2}$ as above.
\end{remark}

\begin{remark}
  The estimate \eqref{box0_LE} is true for a general $\phi$ even if it
  is spherically symmetric.  On the other hand, if
  $\int_{\mathbb{S}^2}\phi(t,r) dV_{\mathbb{S}^2} \equiv 0$ then by
  Poincare's estimate one can also add an LE estimate for
  $r^{-1}\phi$.  However, a bound for $r^{-1}\phi$ in the spherical
  symmetric case is impossible due to constant solutions which have
  finite energy even after truncation outside a large dyadic set. Note
  that this is in stark contrast to the $LE$ estimates for the scalar
  wave equation $\Box_g$ for which one can estimate $r^{-1}\phi$ even
  in the spherical case (see \cite{MMTT_sch}).
\end{remark}

\begin{remark}
The condition that $\langle \partial_r,\partial_t\rangle=0$ 
at $r=r_\mathcal{T}$
is essential if one wishes to gain an unweighted estimate for $\partial_r\phi$. 
There is an alternate 
geometric description of this requirement which is that
the unweighted derivative must correspond to  
quantizations of the defining functions for the stable and unstable manifolds
of the trapped set.  
\end{remark}

The remainder of this section will be devoted to the proof of Theorem \ref{Main_LE_prop}.
The method we  employ  is to estimate separately the contributions 
coming from the  region $\la dr, dr\ra\approx 0$ (event horizon), and 
the  region $\la dr, dr\ra>0$ (domain of outer communication). 
This allows us to build estimates in a modular way, which we believe should 
also be useful for further applications. 

Close to $r=r_M$ our estimates are purely multiplier based,
and boil down to the  construction of well chosen null frames. There are two key
estimates here: \eqref{red_shift_LE} and \eqref{dt_LE} below. The first
captures the contribution of waves parallel 
to $r=r_M$, and is simply a version of the ``red shift'' estimates first introduced
in \cite{DR_sch} (see also \cite{DR_survey} for a more general exposition). The second
bounds the contribution of waves transverse to $r=r_M$, and turns out to be  a version 
of the conservation of energy.

The region $r>r_M$ involves the most work, and is handled by estimate
\eqref{exterior_LE} below. To prove it we introduce a generalized
``Regge-Wheeler'' type coordinate condition to put the radial part
$h_{ab}$ of the metric in conformal form.  After a little bit of work
to show that the truncated estimate \eqref{exterior_LE} follows from
an untruncated version in Regge-Wheeler coordinates, the analysis
follows along the lines of \cite{BS_uniform} and \cite{MMTT_sch} which
is essentially microlocal in nature.


\subsection{Description of the geometry}

In this section we compute some special coordinate systems in the regions where
$\la dr, dr\ra \approx 0$ and where
$\la dr, dr\ra >0$. 

First, recall  that according to part ii) of Definition 
\ref{black_hole_defn} there exists  a unique $r_M>r_0$ with the property
 that $g^{rr}=\la dr, dr\ra|_{r=r_M}=0$.
Furthermore, the assumption that $\partial_r g^{rr}\neq 0$ throughout $r\geqslant r_0$ 
implies $\partial_r g^{rr}|_{r=r_M}>0$. This is the key non-degeneracy condition which 
leads to good estimates in the region $\la dr, dr\ra \approx 0$. To capture
it we make the following definition:

\begin{deff}[Negatively N-boosted null pairs]\label{null_pair_defn}
Let $\la\cdot, \cdot \ra$ denote the $(1+1)$ Lorenztian inner product of $h$.
For a fixed number $N>0$ we call  a pair of vector-fields $L,\bL$, defined  over a time independent 
neighborhood $\mathcal{H}$ of $r=r_M$, a  
``negatively N-boosted null pair''  if there exists a fixed $c=c(N)>0$ such that the following hold:
\begin{enumerate}[i)]
		\item (Basic relations)  One has the relations $\la L, L\ra=\la\bL, \bL\ra =0 $ and $\la L, \bL\ra=-2$.
		Both $L$ and $\bL$ are future directed in the sense that
		$L t,\bL t>0$.
		In addition $\bL$ is incoming in the sense that $\bL r<-c<0$.
		\item (Stationarity) The frame is stationary,  that is $[\partial_t , L]=[\partial_t , \bL]=0$.
		\item (Boosting condition)
		One has $\nabla_{\bL}\bL= 2N \underline{\chi} \bL$, where $N>0$ is  as above and
		$\underline{\chi}=\bL r$.
\end{enumerate}
\end{deff}

\begin{remark}
Condition iii) above is malleable in the sense that one can construct
pairs of vector-fields $L,\bL$ which satisfy i) and ii), and which have 
$\nabla_{\bL}\bL=\underline{\sigma}\, \bL$, where $\underline{\sigma}$ is
an arbitrary function of $r$. Our choice of constant in iii) is for later convenience when
constructing multipliers.
\end{remark}

In terms of such null pairs close to $r=r_M$ we have the following result, which follows closely 
the presentation of Dafermos-Rodnianski in \cite{DR_survey}:

\begin{lemma}[Description of the geometry close to $r=r_M$]\label{horizon_lemma}
Fix a number  $N>0$. Then under the assumptions of Definition 
\ref{black_hole_defn} there exists a (time independent) 
neighborhood $\mathcal{H}$ of $r=r_M$ and a 
 negatively N-boosted null pair  defined over $\mathcal{H}$ as in Definition \ref{null_pair_defn} above.
Furthermore, for any such null pair    there exists a constant $c=c(N)>0$ such that:
\begin{enumerate}[i)]
		\item (Relation to $\partial_t$) One may write $\partial_t = q_+ L + q_-\bL$ where $q_+>c$, and where
		$q_-=\lambda g^{rr}$ for some weight function $\lambda(r)$ with  $\lambda>c$
		as well.\label{dt_rel}
		\item (Red Shift) One has the identity  $\nabla_L L = \sigma L$ for some smooth function 
		$\sigma > c$.
		\item (Area variations) Define $\underline{\chi}=\bL r$ and $\chi=Lr$. Then in addition
		to $\underline{\chi}<-c$, one also has $\chi=\gamma g^{rr}$ for some smooth function 
		$\gamma>c$.\label{area_var}
\end{enumerate} 
\end{lemma}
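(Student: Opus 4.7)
The plan is to first construct a negatively $N$-boosted null pair, and then verify that properties (i)--(iii) follow for any such pair.

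\emph{Construction.} Working in $(t,r)$-coordinates, stationarity makes all metric components functions of $r$ alone, and the null directions in the $(t,r)$-plane are the two roots $\beta_\pm(r)$ of the scalar quadratic $h_{00} + 2 h_{01}\beta + h_{11}\beta^2 = 0$. By Cramer's rule, as recorded in Remark \ref{BH_def_rem}, $h_{00} = \det(h)\, g^{rr}$, so $h_{00}$ vanishes at $r_M$ and exactly one root tends to zero there; the other limits to $-2 h_{01}/h_{11}|_{r_M}$, with $h_{01}|_{r_M}\neq 0$ forced by $\det(h)|_{r_M} = -h_{01}^2|_{r_M}$ being negative. I would take $L_0$ along the vanishing root (tangent to the horizon) and $\bL_0$ along the other (transverse), with $r$ oriented so that $\bL_0 r < -c$ locally, then rescale by a function of $r$ to normalize $\langle L, \bL\rangle = -2$. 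To satisfy condition (iii) of Definition \ref{null_pair_defn} I would use the remaining freedom $\bL \mapsto e^{\psi} \bL$, $L \mapsto e^{-\psi} L$ (preserving (i) and (ii)); the boosting condition then becomes a scalar first-order ODE for $\psi(r)$, solvable locally.

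\emph{Property (i).} Since $L, \bL$ span the $(t,r)$-plane, the decomposition $\partial_t = q_+ L + q_- \bL$ is well defined with $q_\pm$ smooth functions of $r$. At $r_M$, the 2D null cone contains only the directions of $L$ and $\bL$; transversality of $\bL$ forces $\partial_t|_{r_M}$ to be a positive multiple of $L|_{r_M}$, giving $q_+|_{r_M}>0$ and $q_-|_{r_M}=0$, and continuity yields $q_+ > c$ locally. The identity $\langle \partial_t,\partial_t\rangle = -4q_+q_-$ combined with $h_{00} = \det(h)\, g^{rr}$ then gives $q_- = \lambda g^{rr}$ with $\lambda = -\det(h)/(4q_+) > c$, using $\det(h) < 0$.

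\emph{Properties (ii) and (iii).} The inequality $\underline{\chi} = \bL r < -c$ is built into the definition. For $\chi = Lr$, the alignment $L|_{r_M}\propto \partial_t$ gives $\chi|_{r_M}=0$, and differentiating the null condition $\langle L,L\rangle = 0$ at $r_M$ shows $\partial_r\chi|_{r_M}$ is a positive multiple of $\partial_r g^{rr}|_{r_M}$ (using $h_{01}|_{r_M}\neq 0$). Hence $\chi/g^{rr}$ extends smoothly to a function $\gamma > c$ in a neighborhood. For (ii), $\nabla_L L \perp L$ (null condition) together with the 2D structure of the factor plane forces $\nabla_L L = \sigma L$ automatically. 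A Christoffel computation at $r_M$, using $L|_{r_M}=\alpha\partial_t$ and $h^{rr}|_{r_M}=0$ to kill $\Gamma^r_{tt}$, produces $\sigma|_{r_M}$ proportional to $\partial_r g^{rr}|_{r_M} > 0$; continuity then gives $\sigma > c$ on all of $\mathcal{H}$. The final boost rescaling $L\mapsto e^{-\psi}L$ multiplies $\sigma|_{r_M}$ only by a positive factor (since $\psi$ depends only on $r$ and $L_0|_{r_M}$ has no $\partial_r$ part), preserving positivity.

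The main obstacle I anticipate is the careful tracking of signs --- the orientation of $r$, the correct assignment of roots to $L$ vs.\ $\bL$, and the induced sign of $h_{01}|_{r_M}$ --- so that the four positivity claims for $q_+,\lambda,\gamma,\sigma$ all hold simultaneously. All of them reduce to the non-degeneracy condition $\partial_r g^{rr}|_{r_M} > 0$ (from assumption (ii) of Definition \ref{black_hole_defn} combined with the asymptotic flatness of (i)) and to $\det(h) < 0$.
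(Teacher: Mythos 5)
Your construction and your part (i) are essentially the paper's: it also works in regular $(t,r)$ coordinates, takes the two null directions of the $(t,r)$-plane (written there as $-\nabla t \pm \sqrt{-h^{00}/h_{rr}}\,\partial_r$, the same directions as your two roots $\beta_\pm$), normalizes $\la L,\bL\ra=-2$, and enforces the boosting condition by exactly your boost freedom, reduced to a first-order ODE in $r$; part (i) is likewise obtained from $q_+q_-=-\tfrac14 h_{00}=-\tfrac14\det(h)\,g^{rr}$ together with $\partial_t|_{r_M}$ being a positive multiple of $L|_{r_M}$. You diverge in how (ii) and (iii) are verified. For (iii) the paper has a shorter route worth adopting: since $\partial_t r=0$, part (i) gives $0=q_+\chi+q_-\bchi$, hence $\chi=-q_+^{-1}\lambda\,\bchi\, g^{rr}$ and $\gamma=-q_+^{-1}\lambda\,\bchi>c$ immediately, with no differentiation of the null condition and no reference to the sign of $h_{01}(r_M)$. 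For (ii) the paper avoids Christoffel symbols by using stationarity: $\sigma=\tfrac12\la\nabla_L\bL,L\ra$, replacing $L$ by $q_+^{-1}\partial_t$ up to $O(r-r_M)$ and using the Killing property to get that $\sigma$ is a positive multiple of $\bL h_{00}$ modulo $O(r-r_M)$, and then $\bL h_{00}=\bchi\,\det(h)\,\partial_r g^{rr}+O(r-r_M)>0$ follows from $\bchi<0$, $\det(h)<0$, $\partial_r g^{rr}>0$. Your surface-gravity computation yields the same quantity, and your observation that a boost changes $\sigma|_{r_M}$ only by a positive factor (because $L$ applied to a function of $r$ vanishes where $\chi=0$) is precisely the point needed to pass from the constructed pair to an arbitrary one.

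One caveat on your closing claim: the sign of $h_{01}(r_M)$ --- equivalently, which null root is tangent to $r=r_M$ and whether the transverse future null direction there is ingoing --- does \emph{not} follow from $\partial_r g^{rr}>0$ and $\det(h)<0$ alone. It is the statement that $r=r_M$ is a future (black-hole) horizon, and has to be read off from the forward global hyperbolicity requirement in Definition \ref{black_hole_defn}: a time-reversed (``white-hole'') metric satisfies both of your sign conditions yet admits no $\bL$ with $\bL r<-c$ on a full neighborhood of $r=r_M$. The paper makes the same tacit use of this orientation when it declares $\ell$ outgoing with $\ell r|_{r=r_M}=0$, so this is a shared gloss rather than an error in your argument; but your version of (iii) leans on that sign more than necessary, whereas the $\partial_t r=0$ identity does not.
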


It remains to discuss the geometry in the region $r>r_M$. For this we have:

\begin{lemma}[Conformal coordinates in  $r>r_M$]\label{exterior_lemma}
Let $h_{ab}=g_{ab}$ denote the radial part of the Lorentzian metric from 
Definition \ref{black_hole_defn}. For the statement of this lemma we also set
$|h|=|det(h)|$, where the determinant is computed in regular $(t,r)$ coordinates.
Then  there
exist two functions $s=t + b(r)$ and $r_*=r_*(r)$ defined in the region $r>r_M$, such that
$r_*(r_\mathcal{T})=0$, and such that $(s,r_*)$  are smooth
coordinates in $r>r_M$ with:
\begin{equation}
		r_* \ \to \ -\infty \hbox{\ \ as\ \ } r\ \to \ r_M \ , 
		\qquad\qquad 
		r_* \ \to \ \infty \hbox{\ \ as\ \ } r\ \to \ \infty \ . \notag
\end{equation} 
Furthermore the following properties hold:
\begin{enumerate}[i)]
		\item (Asymptotics in $r_M < r < C$) One has (note $g^{rr}=h^{rr}$):
		\begin{equation}
				\partial_ r s \ = \  -|h |^{-\frac{1}{2}} (g^{rr})^{-1} + \td{s}(r) \ , \qquad
				\partial_r r_* \ = \ | h|^{-\frac{1}{2}} (g^{rr})^{-1}
				\ , \notag
		\end{equation}
		where  $\td{s}$ is uniformly bounded
		with all of its derivatives on $r>r_M$. Here all derivatives are taken with respect to regular $(t,r)$
		coordinates as in Definition \ref{black_hole_defn}.
		\item (Asymptotics as $r\to \infty$) As $r\to\infty$ one  has the asymptotics:
		\begin{equation}
				\partial_ r^k s \ = \   O(r^{-k}) \ , \qquad k\geqslant 1 \ , \notag
		\end{equation}
		as well as:
		\begin{equation}
				\partial_r r_* \ = \ 1+ O(r^{-1}) \ , \qquad\qquad
				\partial_r^k r_* \ = \ O(r^{-k}) \ , \qquad k\geqslant 2
				\ . \notag
		\end{equation}
		Again everything is computed with respect to regular $(t,r)$
		coordinates.
		\item (Conformal form) With respect to $(s,r_*)$ the $(1+1)$ dimensional
		Lorentzian metric $h$ can be written as:
		\begin{equation}
				h \ = \ \Omega^2 (-ds^2+dr_*^2) \ , \qquad
				\Omega^2 \ = \ -g_{00} \ =\  |h | g^{rr} \ . \notag
		\end{equation}
		\item (Trapped set) The unique trapped set of part iii) of Definition \ref{black_hole_defn}
		is at $r_*=0$. At $r_*=0$ one has $V'(0)=0$ and $V''(0)<0$, where $V=V(r_*)$
		is the effective potential $V=r^{-2}\Omega^2$. Moreover, $V'\neq 0$
		for $r_*\neq 0$. 
		\item (Asymptotics of $V(r_*)$) Finally, $V(r_*)$ has the following asymptotic formulas 
		some fixed $c>0$:
		\begin{equation}
				\partial_{r_*}^k V(r_*) \ \approx  \ (r-r_M) \ \approx \ e^{cr_*} \hbox{\ \ as \ \ } r_*\to-\infty \ , 
				\qquad\qquad 
				(-1)^k\partial_{r_*}^k 
				V(r_*) \ \approx \ r^{-2-k} \ \approx \ r_*^{-2-k} \hbox{\ \ as \ \ } r_*\to \infty \ . \notag
		\end{equation}
\end{enumerate}
\end{lemma}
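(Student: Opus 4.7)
The plan is to construct $s$ and $r_*$ explicitly so that the conformal form iii) is automatic, then read off the remaining properties by direct computation. For the time shift, substituting $dt = ds - b'(r)\,dr$ into $h = h_{tt}\,dt^2 + 2h_{tr}\,dt\,dr + h_{rr}\,dr^2$ kills the $ds\,dr$ cross term precisely when $b'(r) = h_{tr}/h_{tt}$; this is smooth on $r > r_M$ since Remark \ref{d_t_rem} gives $h_{tt} = \langle \partial_t, \partial_t\rangle < 0$ there. Using the identities $h_{tt} = -|h|g^{rr}$ and $h_{rr} - h_{tr}^2/h_{tt} = (g^{rr})^{-1}$ (both obtained by inverting the $2 \times 2$ matrix $h_{ab}$), the result is
\begin{equation}
h \ = \ -|h|g^{rr}\, ds^2 + (g^{rr})^{-1}\, dr^2. \notag
\end{equation}
Setting $\partial_r r_* = |h|^{-1/2}(g^{rr})^{-1}$ with $r_*(r_\mathcal{T}) = 0$ then produces $h = \Omega^2(-ds^2 + dr_*^2)$ with $\Omega^2 = |h|g^{rr} = -g_{00}$, which is exactly iii).

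Next I would check the asymptotics in i) and ii). Near $r = r_M$, part ii) of Definition \ref{black_hole_defn} gives $g^{rr}$ a simple zero, so $\partial_r r_*$ has a simple pole, forcing $r_* \to -\infty$ logarithmically with $r - r_M \sim e^{\alpha r_*}$ for some $\alpha > 0$. The non-degeneracy $\det h \neq 0$ at $r_M$ combined with $h_{tt}|_{r_M} = 0$ forces $h_{tr}^2 = |h|$ at $r_M$; after fixing the orientation of $t$ so that $h_{tr} > 0$ at $r_M$, a Taylor expansion shows the combination $b'(r) + |h|^{-1/2}(g^{rr})^{-1}$ extends smoothly across $r_M$ as the function $\tilde s(r)$, establishing i). At spatial infinity the asymptotic flatness $h_{ab} = \hbox{diag}(-1,1) + O(r^{-1})$ of part i) of Definition \ref{black_hole_defn} gives $b'(r) = O(r^{-1})$ and $\partial_r r_* = 1 + O(r^{-1})$ at leading order, and the higher-derivative bounds in ii) follow by repeated differentiation using the $O(r^{-1-k})$ decay of $\partial_r^k h_{ab}$.

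For iv) and v) I would invoke part d) of Remark \ref{BH_def_rem}, which states that the effective potential $V(r) = r^{-2}\Omega^2 = -r^{-2}\langle\partial_t,\partial_t\rangle$ has a unique critical point at $r = r_\mathcal{T}$ in $r > r_M$, with $V'(r_\mathcal{T}) = 0$ and $V''(r_\mathcal{T}) < 0$. Since $\partial_r r_* > 0$ throughout the region, the chain rule transfers these properties verbatim to $r_*$-derivatives and places the critical point at $r_* = 0$ by our normalization. The $r_* \to \pm\infty$ behavior of $V$ then follows from $V = r^{-2}|h|g^{rr}$ together with the radial asymptotics already established: near $r_M$ the factor $g^{rr}$ is linear in $r - r_M \sim e^{\alpha r_*}$, while as $r \to \infty$ one has $V \sim r^{-2} \sim r_*^{-2}$, with the derivative bounds obtained by iteration. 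The main technical obstacle is the smoothness check at $r = r_M$ for $\tilde s(r)$, which hinges on the signs of $h_{tr}$ and $|h|^{1/2}$ matching there; this matching is implicit in the assumption that $(t, r)$ is a regular coordinate system and that $\partial_t$ is the future-directed null generator at $r_M$.
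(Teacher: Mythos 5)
Your construction of $s$ and $r_*$ and your treatment of parts i)--iii) and v) essentially reproduce the paper's own argument: the same choice $b'=g_{0r}/g_{00}$, the same ODE $\partial_r r_* = |h|^{-\frac12}(g^{rr})^{-1}$ with $r_*(r_\mathcal{T})=0$, and the same Taylor expansion of $g_{0r}=|h|^{\frac12}\sqrt{1-g^{rr}g_{rr}}$ to exhibit the bounded remainder $\td s$. One small caution: you cannot ``fix the orientation of $t$'' to arrange $h_{tr}>0$ at $r_M$, since the time orientation is already fixed by Definition \ref{black_hole_defn}; but the sign is in fact forced, because at $r=r_M$ one has $\partial_t=q_+L$ with $q_+>0$ and $\la L,\partial_r\ra>0$ (writing $\partial_r=\alpha L+\beta\bL$, the normalization $dr(\partial_r)=1$ together with $Lr=0$, $\bL r<0$ gives $\beta<0$, hence $\la L,\partial_r\ra=-2\beta>0$). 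This is what the paper implicitly uses when writing $g_{0r}=\sqrt{|h|+g_{00}g_{rr}}$ near the horizon.

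The genuine gap is in part iv). You invoke part d) of Remark \ref{BH_def_rem}, but that remark is an ``it turns out'' preview of precisely what this lemma is meant to establish: the footnote to condition iii) of Definition \ref{black_hole_defn} points to Lemma \ref{exterior_lemma} for the precise formulation of the trapping hypothesis, so citing the remark here is circular. The hypothesis you are actually given is geometric: $r=r_\mathcal{T}$ is the unique sphere in $\{\la dr,dr\ra>0\}$ trapping the null geodesics initially tangent to it, and this trapped set is normally hyperbolic. The missing work is the translation of this into the analytic statement about $V$: since conformal rescalings preserve null geodesics, one passes to $\td g=-ds^2+dr_*^2+\Omega^{-2}r^2 d\omega^2$, projects the null flow onto the geodesic flow of $dr_*^2+\Omega^{-2}r^2d\omega^2$ with Hamiltonian $\frac12\big(\xi_{r_*}^2+V(r_*)|\sxi|^2\big)$, and checks that a sphere $r=r_0$ is trapped iff $\xi_{r_*}=\dot\xi_{r_*}=0$ there, i.e.\ iff $V'(r_*(r_0))=0$. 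Uniqueness in Definition \ref{black_hole_defn} iii) then yields $V'\neq 0$ for $r_*\neq 0$, and the normal hyperbolicity (equivalently, nondegeneracy of the critical point, which must be a maximum since $V>0$ with $V\to 0$ as $r_*\to\pm\infty$) yields $V''(0)<0$. Without this reduction, part iv) --- including the statement $V'\neq 0$ for $r_*\neq 0$, on which the later choice of the trapping weight $w_{ln}$ and the multiplier sign conditions rely --- is not proved in your write-up; the chain-rule remark you make only transfers properties of $V$ from $r$ to $r_*$, it does not connect them to the dynamical trapping hypothesis.
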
\ret


\begin{proof}[Proof of Lemma \ref{horizon_lemma}]
We proceed in a series of steps:

\step{1}{Construction of the  null pair with boosting condition}
First we show the existence of $L,\bL$ according to Definition \ref{null_pair_defn}.
We denote by $(t,r)$ any regular system of coordinates as in Definition 
\ref{black_hole_defn}, and by $h_{ab}=g_{ab}$ the components
of the radial part of $g$. 
Set $T=-\nabla t$ the future directed time function gradient. 
Then we have:
\begin{equation}
		\la T,T\ra \ = \ h^{00} \ < \ 0 \ , \qquad \la T,\partial_r \ra \ = \ 0 \ , \qquad
		\la \partial_r , \partial_r \ra \ = \ h_{rr} \ > \ 0 \ . \notag 
\end{equation}
From these one can form a pair of null vectors:
\begin{equation}
		\ell \ = \ T + \sqrt{-h^{00}/h_{rr}}\partial_r \ , \qquad
		\underline{\ell} \ = \ T- \sqrt{-h^{00}/h_{rr}}\partial_r \ . \notag
\end{equation}
Immediately we have $\ell t=\underline{\ell} t=-h^{00}>0$. In addition note that $\la\ell,\partial_r\ra>0$,
thus $\ell$ is the outgoing null direction and in particular $\ell r|_{r=r_M}=0$. This also shows 
$Tr|_{r=r_M}<0$.
Choosing $L=\lambda \ell$ and $\bL=\underline{\ell}$ for an appropriate positive weight function
$\lambda(r)$ we immediately have the inner product and future direction
properties and of part i) of Definition \ref{null_pair_defn}, as well as
the stationarity condition ii). Furthermore, one computes $\underline{L} r =  Tr - \sqrt{-h^{00}/h_{rr}}<0$
where the inequality holds in a neighborhood of $r=r_M$.

To establish part iii) of Definition \ref{null_pair_defn} we remark that any null pair obeying 
parts i)-ii) is invariant with respect to boosting $L\to q^{-1} L$ and $\bL\to q \bL$
for a smooth positive weight function $q$. There exists a function $\underline{\sigma}$ such that
$\nabla_{\bL}\bL=\underline{\sigma}\, \bL$. Set $\td{\bL}=q \bL$ and define $\td{\underline{\sigma}}$
accordingly. Then a quick computation shows that:
\begin{equation}
		\td{\underline{\sigma}} \ = \ \underline{\chi}q' + \underline{\sigma}q \ , 
		\qquad\qquad \hbox{where \ \ \ \ } \underline{\chi} \ = \ \bL r \ . \notag
\end{equation}
Setting $\td{\underline{\sigma}}=2N \td{\bL} r=2Nq \underline{\chi}$ gives the following ODE for
$q(r)$:
\begin{equation}
		q' \ = \ \big(2N-(\underline{\chi})^{-1}\underline{\sigma}\big)q \ , \notag
\end{equation}
which is well defined on account of the condition $\underline{\chi}\neq 0$. Such an equation can always
be solved on $\mathcal{H}$ subject to the constraint $q >0$.

\step{2}{Relation to $\partial_t$}
To prove part i) of Lemma \ref{horizon_lemma} we set $q_\pm$ to 
be the coefficients of $\partial_t$ in the basis $\{L,\bL\}$, and note that
$q_\pm>0$ where $\partial_t$ is time-like. Then one has:
\begin{equation}
		q_+q_- \ = \ -\frac{1}{4}h_{00} \ = \ -\frac{1}{4}\det(h) g^{rr} \ . \label{qq_iden}
\end{equation}
By assumption \ref{r_M_assumption}) of Definition \ref{black_hole_defn}
the RHS is a function with simple zero at $r=r_M$. Thus only one of $q_{\pm}$ can vanish, and 
it does so with a simple zero at $r=r_M$. But $r=r_M$ is a null hypersurface with generator proportional
to $L$   and on this hypersurface $\partial_t$ is also null (see part b) of
Remark \ref{BH_def_rem}). Thus $\partial_t |_{r=r_M}=cL|_{r=r_M}$ for some $c>0$, 
which shows that $q_-$ must have  a simple zero at $r=r_M$ and $q_+>0$ throughout
$\mathcal{H}$. Referring back to \eqref{qq_iden} gives the formula for $q_-$ in terms of 
$g^{rr}$ and a non-vanishing weight.

\step{3}{The ``red shift''}
Using the results of the previous steps we have:
\begin{equation}
		\sigma \ = \ -\frac{1}{2}\la\nabla_L L , \bL\ra \ =\  \frac{1}{2}\la \nabla_L \bL , L\ra
		\ = \ \frac{1}{2}q_+^2 \la \nabla_{\partial_t} \bL , \partial_t \ra + O(r-r_M)
		\ = \ \frac{1}{4}q_+^2 \bL h_{00} + O(r-r_M) \ . \notag
\end{equation}
For the first term on the RHS we further compute:
\begin{equation}
		\bL h_{00} \ = \ \bL(r)\partial_r \big( \det(h)^{-1}h^{rr}\big) \ = \ \bL(r) \det(h)^{-1} \partial_r g^{rr}
		+ O(r-r_M) \ > \ 0 \ , \qquad \hbox{if\ \ } |r-r_M|\ll 1 \ . \notag
\end{equation}

\step{4}{Area variation}
In the construction of $L,\bL$ we have already shown that $\bL r<-c$. Using \textbf{Step 2} above
we compute:
\begin{equation}
		0 \ = \ \partial_t r \ = \ q_+ L r + \lambda g^{rr} \bL r \ . \notag
\end{equation}
The desired result follows by solving for $Lr$ and using the already established 
properties of $\lambda,q_+,\bL r$.
\end{proof}\ret

Next, we construct conformal coordinates in the region $r>r_M$.

\begin{proof}[Proof of Lemma \ref{exterior_lemma}]
Recall that our $(1+1)$ metric $h$ as the form $h=g_{ab}dx^adx^b$ with the 
conditions of Definition \ref{black_hole_defn}. We let $g_{00},g_{0r},g_{rr}$ denote 
the components of $h$ in the original (non-singular) $(t,r)$ coordinates. As in the statement 
of Lemma \ref{exterior_lemma} we also set $|h|=|\hbox{det}(h)|$ computed with respect to 
$(t,r)$.

\step{1}{Normalized coordinates} 
To normalize things in the region $g^{rr}>0$
we first introduce a singular time function whose level sets are perpendicular to the Killing field
$\partial_t$. Setting  $t=s-b(r)$ we have in the $(s,r)$ coordinates:
\begin{equation}
		h \ = \ g_{00} ds^2 + 2(g_{0r}-g_{00} b')dsdr + (g_{rr} + g_{00}(b')^2 - 2g_{0r}b')dr^2 \ .  \notag
\end{equation}
Now choose $b(r)$ so that $g_{00} b' = g_{0r}$ and $b(r_\mathcal{T})=0$, 
so our metric takes the diagonal form:
\begin{equation}
		h \ = \ h_{ss}ds^2 + h_{rr}dr^2 \ , \label{sch_form}
\end{equation}
where:
\begin{equation}
		h_{ss} \ = \ g_{00} \ , \qquad
		h_{rr} \ = \ g_{rr} - (g_{0r})^2/g_{00} \ , \label{h_components}
\end{equation}
which is now only defined in the region $g_{00}<0 \ \Leftrightarrow\
r>r_M$. We remark that this change of coordinates
does not affect the determinant of the metic, and in either case we have:
\begin{equation}
		|h| \ = \ - h_{ss}h_{rr} \ = \ (g_{0r})^2-g_{00}g_{rr} \  . \label{det_form}
\end{equation}
We refer to \eqref{sch_form} as the ``Schwarzschild form'' of the metric $h$.

Next, we make an additional transformation to put the metric in conformal form by defining 
$r_*=r_*(r)$ according to the ODE:
\begin{equation}
 		\frac{dr_*}{dr} \ = \ \sqrt{-h_{rr}/h_{ss}} \ , \qquad  r_*(r_\mathcal{T}) \ = \ 0 \ . \label{r_*_equation}
\end{equation}
Using  the identity $h_{ss}=-| h| g^{rr}$ and the fact that $h_{rr}=1/g^{rr}$
the first identity on this last line becomes $\partial_r r_* = | h |^{-\frac{1}{2}} (g^{rr})^{-1}$,
which gives the second formula in part i) of the Lemma and immediately yields the rough asymptotics:
\begin{equation}
		\frac{dr_*}{dr} \ \approx \ 1/g^{rr} \ \approx \ 1/(r-r_M)\ , \quad r\to r_M \ , \qquad\qquad
		\frac{dr_*}{dr} \ \approx \ 1 \ , \quad r\to \infty \ . \notag
\end{equation}
Thus $r_*\to-\infty$ as $r\to r_M$ and $r_*\to \infty$ as $r\to\infty$, and our metric takes the
final form:
\begin{equation}
		h \ = \ \Omega^{2} ( -ds^2 + dr_*^2) \ , \qquad \Omega^{2}  \ = \ -\la \partial_t , \partial_t \ra_g \ ,
		\qquad\qquad -\infty < r_* < \infty \Longleftrightarrow r>r_M \ . \label{RW_form}
\end{equation}
We call this the ``Regge-Wheeler form'' of $h$.

\step{2}{Asymptotics of $s$ as $r\to r_M$}
Here we derive the first formula in i) of the Lemma. From the formulas of the previous step
we have:
\begin{equation}
		\partial_r s \ = \ {g_{0r}}/{g_{00}} \ , \qquad\qquad
		\hbox{where\ \ \ \ }
		g_{00} \ = \ -|h|  g^{rr} \ , \qquad
		\hbox{and\ \ \ \ }g_{0r} \ = \ \sqrt{|h| + g_{00}g_{rr}} \ . \notag
\end{equation}
This allows us to write:
\begin{equation}
		\partial_r s \ = \ -|h |^{-\frac{1}{2}}(g^{rr})^{-1}\cdot \big(1 - f(r,g^{rr})\big) \ , 
		\qquad\qquad f(r,\theta) \ = \ 1- \sqrt{1 - \theta g_{rr}} \  . \notag
\end{equation}
Thus
$f(r,\theta)=\theta \td{f}(r,\theta)$ when $\theta\approx 0$, 
for some other smooth  $\td{f}$, and the desired result follows.

\step{3}{Asymptotics of $(s,r_*)$ as $r\to \infty$}
The result for $s$ follows immediately from $\partial_r^k (g_{ab})=O(r^{-k})$ and 
$(g_{ab}-\eta_{ab})=O(r^{-1})$, where $\eta=\diag(-1,1)$.
The result for $r_*$  also follows from these asymptotics for $g_{ab}$ and the 
formulas on lines \eqref{h_components} and \eqref{r_*_equation}.

\step{4}{Description of the trapped set at $r=r_\mathcal{T}$} 
The Regge-Wheeler form \eqref{RW_form} is particularly convenient for discussing the 
trapped null geodesics of $g$ in the region $r>r_M$. Since conformal metrics have the same
null geodesic flow, it suffices to analyze the direct sum metric:
\begin{equation}
		\td{g} \ = \ -ds^2 + dr_*^2 + \Omega^{-2}r^2 d\omega^2 \ , \qquad 
		d\omega^2 \ = \ \delta_{AB}dx^Adx^B \ . \notag
\end{equation}
The projections of unit speed 
null   geodesics for this metric onto the space parametrized by
$(r_*,x^A)$, where $A=2,3$ denote coordinates on $\mathbb{S}^2$,    
are exactly the unit speed geodesics of the three dimensional
Riemannian  surface of rotation $dl^2=dr_*^2 + \Omega^{-2}r^2 d\omega^2$.
The Hamiltonian for the corresponding geodesic  flow is $p(r_*,x^A,\xi_{r_*},\xi_A)=\frac{1}{2}(\xi_{r_*}^2 
+ \Omega^{2}r^{-2}|\sxi|^2)$, where
$|\sxi|^2=\delta^{AB}\xi_A\xi_B$ is the Hamiltonian of the standard
geodesic flow on $\mathbb{S}^2$.

The equations for a trapped sphere of unit speed geodesics at $r=r_0$ now take the form:
\begin{equation}
		\dot r_*|_{r=r_0} \ = \ \{ p , r_*\}|_{r=r_0}\ = \ 0 \ , \notag
\end{equation}
where $\{p,f\}=\partial_\xi p\partial_x f - \partial_x p\partial_\xi f$ is the Poisson bracket.
In general we have:
\begin{equation}
		\dot r_* \ = \ \xi_{r_*} \ , \qquad \dot \xi_{r_*} \ = \ -\frac{1}{2}\partial_{r_*} (V)|\sxi|^2 \ , \qquad\qquad
		\hbox{where\ \ \ \ } V \ = \ \Omega^{2} r^{-2}  \ , \notag
\end{equation}
and thus $r=r_{0}$ is trapped iff $\xi_{r_*}=\dot\xi_{r_*}=0$ at $r=r_0$, which can happen iff 
$V'(r_0)=0$.
Therefore, condition iii) of Definition \ref{black_hole_defn} implies that $\partial_{r_*}V=0$ iff $r_*=0$
in the region $-\infty < r_* < \infty$. It is also manifestly clear that $V>0$ in that region with  
$V\to 0$ as $r_*\to\pm \infty$. In other words $V(r_*)$ is a positive repulsive potential with
unique maximum at $r_*=0$.\footnote{This also immediately implies there are no other trapped 
geodesics except at $r=r_{\mathcal{T}}$.} 
Finally, the non-degeneracy  condition in iii) of 
Definition \ref{black_hole_defn} means that $V''(0)\neq 0$, so in fact 
$V''(0)< 0$.
Summarizing this, we may write the normal part of the geodesic 
flow at $r=r_\mathcal{T}$ in the form of a standard planar hyperbolic\footnote{In particular the 
assumption of a unique non-degenerate trapped set
implies such a trapped set must be normally hyperbolic.}  fixed point:
\begin{equation}
		\dot r_* \ =  \ \xi_r \ , \qquad
		\dot \xi_r \ = \ \lambda_0 |\sxi|^2 r_*  + O(|\sxi|^2r_*^2 ) \ , \qquad
		\lambda_0 \ =\  -\frac{1}{2} V''(0) >0 \ . \notag
\end{equation}

\step{5}{Asymptotics of $V$} 
Finally  the asymptotic formulas of part v) for $V(r_*)$ as $r_*\to\pm\infty$ follows from the explicit 
formula above and the fact that $r_*=A\ln (r-r_M)+O(1)$ as $r_*\to-\infty$, and $r_*=r+B\ln(r)+O(r^{-1})$
as $r_*\to \infty$, where $A>0$ and $B$ are constant. Further details are left
to the reader.
\end{proof}


\subsection{The multiplier method for $\Box^0$}


Before embarking on the proof of Theorem \ref{Main_LE_prop} we pause 
to introduce some notation and identities that will be useful for the remainder of the section.
For the wave equation $\Box^0$ one has a $(1+1)$ energy momentum tensor:
\begin{equation}
		Q_{ab}[\phi] \ = \ \int_{\mathbb{S}^2}\big[\partial_a \phi\partial_b\phi 
		- \frac{1}{2}h_{ab}(\partial^c\phi \partial_c\phi + r^{-2}|\sd\phi|^2 )\big]dV_{\mathbb{S}^2}
		\ . \notag
\end{equation}
In terms of any null pair basis $L,\bL$ one can write its components as:
\begin{equation}
		Q_{LL}[\phi]  \ = \  \int_{\mathbb{S}^2}(L\phi)^2dV_{\mathbb{S}^2} \ , \qquad 
		Q_{\bL\, \bL}[\phi]  \ = \  \int_{\mathbb{S}^2}(\bL\phi)^2dV_{\mathbb{S}^2} \ , \qquad
		Q_{L\bL}[\phi]  \ = \  \int_{\mathbb{S}^2} |\sd\phi|^2dV_{\mathbb{S}^2} \ . \label{Q_components}
\end{equation}
This object behaves similarly to the energy momentum tensor for $\Box_h + V$ where $V=-k^2r^{-2}$.
Specifically one has:
\begin{equation}
		\nabla^a Q_{ab}[\phi] \ =\  
		 \int_{\mathbb{S}^2} \Box^0\phi \partial_b\phi\,  dV_{\mathbb{S}^2} 
		 + r^{-3}\partial_b(r) \int_{\mathbb{S}^2} |\sd\phi|^2dV_{\mathbb{S}^2} \ . \notag
\end{equation}
In the usual way this leads to momentum identities for any (spherically 
symmetric) vector-field $X=X^a\partial_a$:
\begin{equation}
		\nabla^a {}^{(X)}\! P_a[\phi]  =  \frac{1}{2}
		\int_{\mathbb{S}^2}\!\!\! Q_{ab}[\phi]{}^{(X)}\! \pi^{ab}  dV_{\mathbb{S}^2} 
		 \!+\! r^{-3}X(r) \int_{\mathbb{S}^2}\!\!\! |\sd\phi|^2dV_{\mathbb{S}^2} 
		 \!+\! \int_{\mathbb{S}^2}\!\!\! \Box^0\phi X\phi\,  dV_{\mathbb{S}^2} \ , \ \
		  {}^{(X)}\! P_a[\phi]  =  Q_{ab}[\phi] X^b \ , \label{momentum}
\end{equation}
where ${}^{(X)}\! \pi_{ab}=\mathcal{L}_X h_{ab}= \nabla_a X_b + \nabla_b X_a$ is the 
$(1+1)$ deformation tensor of $X$.

There are two types of multipliers we will work with:
\begin{equation}
		X \ = \ X^- \bL \ , \qquad 
		Y \  =\ Y^0 \partial_t \ , \label{multipliers}
\end{equation}
for given weight functions $X^-(r),Y^0(r)$. We record their raised 
deformation tensors here:

\begin{lemma}[Deformation tensors]\label{XY_def_lemma}
Let $X,Y$ be the multipliers defined on line \eqref{multipliers}.
 Then with the notation of 
Definition \ref{null_pair_defn} and Lemma \ref{horizon_lemma} we have:
\begin{align}
		{}^{(X)}\! \pi^{LL} \ &= \ 0 \ , 
		&{}^{(X)}\! \pi^{\bL\, \bL} \ &= \ - \chi \cdot  (X^-)' + {\sigma}X^- 
		&{}^{(X)}\! \pi^{L\bL} \ &= \ -\frac{1}{2}\bchi\cdot(X^-)' - N\underline{\chi} X^-
		\label{X_def}\\
		{}^{(Y)}\! \pi^{LL} \ &= \ -\bchi \cdot  q_+ (Y^0)' \ ,
		&{}^{(Y)}\! \pi^{\bL\, \bL} \ &= \ - \chi \cdot  q_- (Y^0)' \ ,
		  &{}^{(Y)}\! \pi^{L\bL} \ &= \ 0 \ . \label{Y_def}
\end{align}
and where we have set $\chi=Lr$ and $\bchi=\bL r$. Close to $r=r_M$ one has the sign relations:
\begin{equation}
		\chi \ \approx \ (r-r_M) \ , \qquad
		\bchi \ \approx \ -1 \ , \qquad \sigma \ \approx \ 1 \ , \qquad 
		q_+ \ \approx \ 1 \ , \qquad
		q_- \ \approx \ (r-r_M)
		\ . \label{signs}
\end{equation}
\end{lemma}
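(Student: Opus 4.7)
The plan is to directly compute $\pi_{ab}=\nabla_a W_b+\nabla_b W_a$ for $W=X$ or $W=Y$ in the null frame $\{L,\bL\}$ and then pass to the raised components. Since $\langle L,\bL\rangle=-2$ and $\langle L,L\rangle=\langle \bL,\bL\rangle=0$, the inverse metric restricted to $\mathrm{span}\{L,\bL\}$ has only off-diagonal entries equal to $-\tfrac12$, so raising indices swaps $L\leftrightarrow\bL$ and contributes a factor of $\tfrac14$:
\begin{equation}
\pi^{LL}=\tfrac{1}{4}\pi_{\bL\bL},\qquad \pi^{\bL\bL}=\tfrac{1}{4}\pi_{LL},\qquad \pi^{L\bL}=\tfrac{1}{4}\pi_{L\bL}. \notag
\end{equation}
The task therefore reduces to computing $\pi_{LL}$, $\pi_{\bL\bL}$, $\pi_{L\bL}$ using the structural identities from Definition \ref{null_pair_defn} and Lemma \ref{horizon_lemma}.

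For $X=X^{-}\bL$, I would expand $\nabla_L X=(LX^{-})\bL+X^{-}\nabla_L\bL$ and $\nabla_{\bL}X=(\bL X^{-})\bL+X^{-}\nabla_{\bL}\bL$. The boosting condition $\nabla_{\bL}\bL=2N\bchi\bL$ shows $\nabla_{\bL}X$ is a scalar multiple of $\bL$, so pairing with $\bL$ vanishes: this gives $\pi_{\bL\bL}=0$ and hence ${}^{(X)}\pi^{LL}=0$. For the remaining components I would use the identity $\langle \nabla_L\bL,L\rangle=-\langle\bL,\nabla_L L\rangle=-\langle\bL,\sigma L\rangle=2\sigma$, obtained by differentiating $\langle L,\bL\rangle=-2$ along $L$ and invoking the red-shift identity from Lemma \ref{horizon_lemma} ii). Since $X^{-}$ depends only on $r$ one has $LX^{-}=\chi(X^{-})'$ and $\bL X^{-}=\bchi(X^{-})'$, and a direct calculation then yields $\pi_{LL}=-4\chi(X^{-})'+4\sigma X^{-}$ and $\pi_{L\bL}=-2\bchi(X^{-})'-4N\bchi X^{-}$. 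Raising indices by the rule above reproduces \eqref{X_def}.

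For $Y=Y^{0}\partial_t$ the key observation is that $\partial_t$ is Killing (this is condition ii) of Definition \ref{null_pair_defn}, built into stationarity of the metric). For any weight $f(r)$ this gives the clean formula $(\mathcal{L}_{f\partial_t}h)_{ab}=(\partial_a f)(\partial_t)_b+(\partial_b f)(\partial_t)_a$. Using Lemma \ref{horizon_lemma} i) I would expand $\partial_t=q_+L+q_-\bL$, and compute $\langle\partial_t,L\rangle=-2q_-$ and $\langle\partial_t,\bL\rangle=-2q_+$. Together with $LY^0=\chi(Y^0)'$ and $\bL Y^0=\bchi(Y^0)'$ this produces $\pi^{Y}_{LL}=-4\chi q_-(Y^0)'$ and $\pi^{Y}_{\bL\bL}=-4\bchi q_+(Y^0)'$. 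For the cross component the natural coefficient is $\chi q_++\bchi q_-=(Lr)q_++(\bL r)q_-=\partial_t r$, which vanishes since $r$ is stationary. Hence $\pi^{Y}_{L\bL}=0$, and raising indices gives \eqref{Y_def}.

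The sign relations \eqref{signs} at $r=r_M$ are direct consequences of previous results: part iii) of Lemma \ref{horizon_lemma} gives $\chi=\gamma g^{rr}$ and Lemma \ref{horizon_lemma} i) gives $q_-=\lambda g^{rr}$ with $\gamma,\lambda$ bounded below, while part ii) of Definition \ref{black_hole_defn} tells us that $g^{rr}$ vanishes simply at $r=r_M$, giving $\chi\approx(r-r_M)\approx q_-$; the lower bounds $\bchi<-c$, $\sigma>c$, $q_+>c$ are part of Definition \ref{null_pair_defn} iii) and Lemma \ref{horizon_lemma} i)--ii). The only subtle step in the argument is the bookkeeping of signs together with the factor of $\tfrac14$ in passing from covariant to contravariant components in a null frame; once this convention is fixed, the verification is mechanical, and the vanishing of $\pi^{LL}$ and $\pi^{L\bL}$ for $Y$ is a pleasant consequence of $\bL$ being in the eigendirection of its own acceleration and of $\partial_t$ preserving $r$.
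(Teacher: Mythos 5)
Your proposal is correct and follows essentially the same route as the paper: raise indices via the null-frame rule with the factor $\tfrac14$, expand ${}^{(X)}\!\pi_{ab}$ using $\nabla_{\bL}\bL=2N\bchi\,\bL$ and $\nabla_L L=\sigma L$, and use that $\partial_t$ is Killing together with $\partial_t=q_+L+q_-\bL$ for the $Y$ multiplier. Your justification that ${}^{(Y)}\!\pi_{L\bL}=0$ via $q_+\chi+q_-\bchi=\partial_t r=0$ is equivalent to the paper's observation that $\partial_t Y^0=0$, since $Y^0$ depends only on $r$.
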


\begin{proof}[Proof of Lemma \ref{XY_def_lemma}]
By raising indices we have ${}^{(X)}\! \pi^{LL}=\frac{1}{4}{}^{(X)}\! \pi_{\bL\, \bL}$, 
${}^{(X)}\! \pi^{\bL\, \bL}=\frac{1}{4}{}^{(X)}\! \pi_{L L}$, and ${}^{(X)}\! \pi^{L\bL}=\frac{1}{4}{}^{(X)}\! \pi_{L\bL}$.
Similarly for ${}^{(Y)}\! \pi^{ab}$. For $X=X^-\bL$ we have:
\begin{equation}
		{}^{(X)}\! \pi_{ab} \ = \ e_a(X^-) \la \bL, e_b\ra + e_b(X^-) \la \bL, e_a\ra
		+ X^- \la \nabla_{e_a}\bL, e_b\ra  +X^- \la \nabla_{e_b}\bL, e_a\ra \ . \notag
\end{equation}
Using the orthogonality of $\bL$ to itself we immediately have ${}^{(X)}\! \pi_{\bL\, \bL}=0$, while:
\begin{equation}
		{}^{(X)}\! \pi_{L L} \ = \ -4 L(X^-) - 2X^-\la\nabla_{L} L, \bL\ra \ , 
		\qquad {}^{(X)}\! \pi_{L \bL} \ = \ -2 \bL(X^-) + X^-\la \nabla_{\bL} \bL , L\ra \ . \label{-_calcs}
\end{equation}
This immediately implies \eqref{X_def}.

Now let $Y=Y^0\partial_t$, for which we have:
\begin{equation}
		{}^{(Y)}\! \pi_{ab} \ = \ e_a(Y^0) \la \partial_t, e_b\ra + e_b(Y^0) \la \partial_t, e_a\ra \ , \notag
\end{equation}
where we have used ${}^{(\partial_t )}\! \pi_{ab}=\la\nabla_a \partial_t, e_b\ra 
+ \la\nabla_b \partial_t , e_a\ra=0$. Using the decomposition $\partial_t = q_+L+q_-\bL$
this immediately gives the first two identities on 
\eqref{Y_def}. For the last term on that line we have:
\begin{equation}
		L(Y^0) \la \partial_t, \bL \ra + \bL(Y^0) \la \partial_t, L\ra \ = \ 
		-2\big(q_+L(Y^0)  + q_-\bL(Y^0) \big) \ = \ -2\partial_t Y^0 \ = \ 0 \ . \notag
\end{equation}

\end{proof}


\subsection{Local energy decay component estimates for $\Box^0$. Proof of Theorem \ref{Main_LE_prop}}

We break our proof of Theorem \ref{Main_LE_prop} down into the
three constituents in the next proposition. For this purpose we consider 
thresholds 
\[
 r_0\leqslant r_1 < r_M < r_2<r_3 < r_\mathcal{T}
\]
We think of $r_3$  as fixed, away from $r_M$ and $ r_\mathcal{T}$.
On the other hand $r_1$ and $r_2$ are free to float, and we reserve the right 
to choose them arbitrarily close to $r_M$. For the statement of our main technical 
result we let $\chi_{[a,b]}(r)$ denote the indicator function (sharp cutoff) 
of the interval $a\leqslant r\leqslant b$.

\begin{proposition}[Three modular local energy decay estimates]\label{modular_prop}
Let $r_1,r_2,r_3$ be three parameters such that $r_0\leqslant r_1 < r_M < r_2<r_3<r_\mathcal{T}$. Then 
there exists a fixed value of $r_3$ such that for every $k>1$  
one has the following three estimates for test functions $\phi$,
where the implicit constants are uniform in $r_1,r_2$ (but may depend on $r_3,k$):
\begin{align}
		&\lp{\chi_{[r_1,r_3]} (\bL\phi,\sd\phi)}{LE_0[0,T]}   \lesssim 
		\lp{\chi_{[r_3,r_\mathcal{T}]} (r-r_\mathcal{T})\sd\phi}{LE_0[0,T]}
		+ E^\frac{1}{2}_{\geqslant r_1}(\phi[0]) + \lp{\chi_{[r_1,r_\mathcal{T}]}\Box^0\phi }{LE^*_0[0,T]}
		 \ , \label{red_shift_LE}\\
		&\lp{\chi_{[r_1,r_2]}w_k L\phi }{LE_0[0,T]}  \lesssim 
		\epsilon \lp{\chi_{[r_1,\infty)}(w_{ln})^{-1}\partial_t\phi}{LE_0[0,T]}+
		\lp{\chi_{[r_1,r_2]}w_k(r-r_M) \bL \phi}{LE_0[0,T]}\label{dt_LE} \\
		&\hspace{.2in} + (r_M-r_1)^\frac{1}{2}
		\lp{\chi_{[r_3,r_\mathcal{T}]} (r-r_\mathcal{T})\sd\phi}{LE_0[0,T]} 
		+ E^\frac{1}{2}_{\geqslant r_1}(\phi[0]) + 
		\epsilon^{-1}\lp{\chi_{[r_1,\infty)}w_{ln}\Box^0\phi}{LE^*_0[0,T]}
		\ . \notag
\end{align}
In addition there exists a $C_{r_2}>0$ such that (again with implicit constant uniform in $r_1,r_2$):
\begin{multline}\label{exterior_LE}
		\lp{\chi_{[r_2,r_3]} 
		w_{k} \big( L\phi, (r-r_M)\bL\phi\big)}{LE_0[0,T]} + 
		\lp{\chi_{[r_2,r_3]} 
		\sd\phi}{LE_0[0,T]}\\
		+ \lp{\chi_{[r_3,\infty)} 
		\partial_r \phi }{LE_0[0,T]}
		+ \lp{\chi_{[r_3,\infty)}(w_{ln})^{-1}(\partial_t\phi,r^{-1}\sd\phi) }{LE_0[0,T]}\\
		 \ \lesssim \ 
		|\ln(r_2-r_M)|^k \lp{\chi_{[r_M,r_2]}w_k \big( L\phi, (r-r_M) \bL\phi)}{LE_0[0,T]}
		+ C_{r_2}\Big( \lp{\chi_{[r_M,\infty)}w_{ln}\Box^0\phi}{LE^*_0[0,T]} + 
		E^\frac{1}{2}_{\geqslant r_M}(\phi[0]) \Big) 
		\ . 
\end{multline}
The weight functions $w_k$ and $w_{ln}$ in estimates \eqref{dt_LE} and \eqref{exterior_LE} are:
\begin{equation}
		w_{ln} \ = \ (1+\big|\ln|r-r_\mathcal{T}|\big|)/(1+|\ln(r)|) \ , \qquad
		w_k(r) \ = \ (1+\big|\ln|r-r_M|\big|)^{-\frac{1}{2}k}|r-r_M|^{-\frac{1}{2}} \ .\notag
\end{equation}
In estimate \eqref{exterior_LE}  one must also 
take $\la\partial_t, \partial_r\ra=0$ when $r=r_\mathcal{T}$.
\end{proposition}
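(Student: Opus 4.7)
My plan is to prove the three estimates of Proposition \ref{modular_prop} independently, since each one targets a distinct region and is proved by a different choice of multiplier. In all three cases the starting point is the momentum identity \eqref{momentum}, so the whole argument reduces to (i) selecting a vector field $X$ whose deformation tensor has the desired sign structure on the relevant region, (ii) bounding the boundary terms at $t=0,T$ and at the spatial cutoffs by the data or by terms that can be absorbed, and (iii) controlling the inhomogeneous contribution $\int \Box^0\phi\, X\phi\, dV$ by Cauchy--Schwarz against RHS\eqref{box0_LE}.

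For the red-shift estimate \eqref{red_shift_LE} I would take $X = X^-(r)\,\bL$ with $X^-$ a positive, smoothly increasing weight supported in $[r_1,r_3]$ that tapers off near $r=r_3$. Using Lemma \ref{XY_def_lemma} together with the sign relations \eqref{signs} and the boosting condition $\nabla_{\bL}\bL = 2N\bchi\bL$, the bulk term $Q_{ab}\,{}^{(X)}\pi^{ab}$ acquires a sum of manifestly positive quantities of the form $\sigma X^-(\bL\phi)^2$ (from ${}^{(X)}\pi^{\bL\bL}$, using that $\chi\approx (r-r_M)$ is small near $r_M$) and $(2NX^- + (X^-)')|\sd\phi|^2$ (from ${}^{(X)}\pi^{L\bL}$, using $-\bchi\approx 1$), provided $N>0$ is chosen large enough that these dominate any sign-indefinite contributions from the region where $\chi$ changes sign. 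The angular curvature term $r^{-3}X(r)|\sd\phi|^2$ in \eqref{momentum} is also favorable because $X(r)=X^-\bchi<0$ and thus produces another positive angular contribution. The bound \eqref{red_shift_LE} then follows after integrating in time: the boundary terms at $t=0,T$ are controlled by the initial energy $E^{1/2}_{\geqslant r_1}$ (using Proposition \ref{energy_prop} to trade time $T$ for time $0$), the cutoff at $r=r_3$ yields the $\chi_{[r_3,r_\mathcal{T}]}(r-r_\mathcal{T})\sd\phi$ error on the RHS, and the source $\Box^0\phi$ term is absorbed by $LE^\ast_0$ after Cauchy--Schwarz.

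For \eqref{dt_LE} I would use $Y = Y^0(r)\partial_t$ with $(Y^0)'>0$ supported in $[r_1,r_2]$. By Lemma \ref{XY_def_lemma}, $Y$ has the convenient property that ${}^{(Y)}\pi^{L\bL}=0$, so no angular term appears, and $Q_{ab}\,{}^{(Y)}\pi^{ab}$ reduces to $-\bchi q_+(Y^0)'(L\phi)^2 - \chi q_-(Y^0)'(\bL\phi)^2$. Thanks to \eqref{signs}, $-\bchi q_+\approx 1$ is positive and bounded below, giving coercivity in $L\phi$ with weight $(Y^0)'$, while the second term comes with size $\chi q_-\sim (r-r_M)^2$, which exactly matches the weight $(r-r_M)^2$ allowed on the RHS of \eqref{dt_LE}. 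Since $Y$ is parallel to the Killing field $\partial_t$, the flux terms at $t=0,T$ are controlled by the conservation of energy near the horizon (this is really where \eqref{signs} is essential, as super-radiance is absent), while the contribution from $(Y^0)'$ being forced to decay outside $[r_1,r_2]$ generates the $\epsilon(w_{ln})^{-1}\partial_t\phi$ term after choosing $Y^0$ to interpolate appropriately. The factor $(r_M-r_1)^{1/2}$ on the RHS reflects the shrinking of the multiplier support as $r_1\uparrow r_M$.

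The main difficulty, and the bulk of the work, lies in \eqref{exterior_LE}. Here I would pass to the Regge--Wheeler coordinates of Lemma \ref{exterior_lemma}, so that the radial part of $g$ becomes conformally flat and the operator $r^{-2}\Box^0$ reduces (after a rescaling $\phi \leadsto r\psi$) to $-\partial_s^2\psi+\partial_{r_*}^2\psi - V(r_*)\,\sDelta\psi$ modulo lower-order errors, with $V$ a positive bump-shaped potential peaked at $r_*=0$. Following \cite{BS_uniform} and \cite{MMTT_sch}, I would build the multiplier as a pseudodifferential operator in the angular variables of the schematic form $A = f(r_*)\partial_{r_*} + \frac{1}{2}f'(r_*) + g(r_*)|\sd|$, with $f$ chosen to change sign through $r_*=0$ with $f'>0$ there, which produces a positive bulk of the form $f'(\partial_{r_*}\psi)^2 + (\tfrac12 f V' - f' V)|\sd\psi|^2 + \cdots$ and a $-\tfrac14 f'''\psi^2$ curvature correction. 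The classical obstruction is that $\tfrac12 fV'-f'V$ must vanish at the trapped set $r_*=0$, and since $V''(0)<0$ is nondegenerate one can arrange a logarithmic gain, at the price of a $|\ln|r-r_\mathcal{T}||^{-1/2}$-type loss, which is exactly the $w_{ln}^{-1}$ weight. The remaining challenge, and in my view the hardest single point, is to truncate this Regge--Wheeler multiplier smoothly near $r=r_2>r_M$ so that it glues onto the horizon estimates \eqref{red_shift_LE}--\eqref{dt_LE}: the truncation error is localized on $[r_M,r_2]$, and the point made in the introduction is that on this strip the (degenerate) energy conservation controls the worst part of the error, so it may be absorbed into the $|\ln(r_2-r_M)|^k\lp{\chi_{[r_M,r_2]}w_k(L\phi,(r-r_M)\bL\phi)}{LE_0}$ term appearing on the RHS of \eqref{exterior_LE}. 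Once this gluing is in place, the remaining $L\phi$, $\sd\phi$, $\partial_r\phi$ and $(w_{ln})^{-1}\partial_t\phi$ bounds follow by combining the microlocal multiplier estimate with the large-$r$ asymptotics of $V$ from part v) of Lemma \ref{exterior_lemma} and with the energy flux from Proposition \ref{energy_prop} through $r=r_M+\epsilon$, bounding the constant $C_{r_2}$.
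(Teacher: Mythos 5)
There is a genuine gap, and it sits precisely where you declare the three estimates can be proved ``independently.'' For \eqref{dt_LE} the multiplier $Y=Y^0\partial_t$ produces boundary terms at $t=T$ and at $r=r_1$ that are \emph{not} sign-definite and are not ``controlled by the conservation of energy near the horizon'': in the strip $r_1\leqslant r\leqslant r_M$ the field $\partial_t$ is spacelike, so the flux density $Q(T,\partial_t)=q_+Q(T,L)+q_-Q(T,\bL)$ has a wrong-sign piece of size $\sim(r_M-r_1)(\bL\phi)^2$ (since $q_-=\lambda g^{rr}<0$ there), and the spacelike boundary $r=r_1$ contributes a flux in $\bL\phi,\sd\phi$ that initial data alone do not control. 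The paper's device is to prove a \emph{strengthened} version of the red-shift estimate (its \eqref{general_red_shift}) which retains exactly these future boundary terms --- the $t=T$ trace of $(\bL\phi,\sd\phi)$ on $[r_1,r_3]$ and the flux through $r=r_1$ --- on its left-hand side, and then to add $(r_M-r_1)^{1/2}\times$ that estimate to the $\partial_t$-multiplier identity; this coupling is also the true origin of the term $(r_M-r_1)^{1/2}\lp{\chi_{[r_3,r_\mathcal{T}]}(r-r_\mathcal{T})\sd\phi}{LE_0[0,T]}$ on the right of \eqref{dt_LE}, which you instead attribute to ``shrinking of the multiplier support'' (in fact $Y^0$ is globally bounded above and below; only $(Y^0)'$ is localized). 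Without this mechanism your proof of \eqref{dt_LE} does not close. Relatedly, invoking Proposition \ref{energy_prop} to handle the $t=0,T$ boundaries in \eqref{red_shift_LE} is a category error --- that proposition is an energy estimate for Maxwell fields, not for scalar solutions of $\Box^0\phi=G$; fortunately it is also unnecessary, since with $X=\varphi\bL$ the $t=T$ and $r=r_1$ fluxes are nonnegative and can simply be kept (which is exactly what the paper exploits).

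Your outline of \eqref{exterior_LE} is closer in spirit to the paper (Regge--Wheeler coordinates, a Virial multiplier $a\partial_{r_*}+\frac12 a'$ with logarithmic loss at trapping, and absorption of the truncation error near $r=r_2$ into the $|\ln(r_2-r_M)|^k\lp{\chi_{[r_M,r_2]}w_k(\cdot)}{LE_0}$ term), but two points are missing. First, no rescaling $\phi\leadsto r\psi$ is needed or used: $\Box^0_{RW}=\Omega^2\Box^0$ exactly, by conformal invariance of the $(1+1)$ part, and your sign for the potential term is off. Second, and more substantively, the estimate is claimed for general $\phi$, while both the singular-weight Virial argument (which produces a point mass $\phi^2|_{r_*=0}$ controllable only via the spherical Poincar\'e inequality) and the Hardy-type bounds \eqref{index_hardy}--\eqref{improved_hardy} used to control the cutoff errors require vanishing spherical average; the paper therefore treats the spherical mean $\overline{\phi}$ separately via \eqref{sph_RW_LE}. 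Your sketch does not address the spherically symmetric component, so as written the exterior argument also has a hole at exactly the low-angular-frequency contribution.
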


\begin{remark}
The purpose of the weight $w_{ln}$ is to account for the degeneracy of local energy decay
near the trapped set $r=r_\mathcal{T}$. On the other hand the weight $w_k$ is only used
in a neighborhood of the horizon $r=r_M$ and accounts for the fact that the ``good component''
$L\phi$ has a non-degenerate energy there.
\end{remark}

Before giving a proof of the above three estimates, we use them to give a short 
demonstration of Theorem \ref{Main_LE_prop}.

\begin{proof}[Proof that Proposition \ref{modular_prop} implies estimate \eqref{box0_LE}]
First note that while
the $LE_0$ estimate in  \eqref{box0_LE} takes place over the region $r\geqslant r_0$, one can easily
reduce it to showing the same bound for $r\geqslant r_1$, as the region $r_0\leqslant r \leqslant r_1$ is easily 
covered by known $LE_0$ bounds on $r\geqslant r_1$ and energy estimates.
 
 For constants $c,C>0$ we add together:
\begin{equation}
		c\times \eqref{red_shift_LE} + C|\ln(r_2-r_M)|^k\times \eqref{dt_LE} 
		+ \eqref{exterior_LE} \ . \notag
\end{equation}
Then  choose $c,C,\epsilon,r_1,r_2$ so that:
\begin{equation}
		c+C^{-1}\ll 1 \ , \qquad\qquad
		\hbox{followed by\ \ \ \ }
		|r_1-r_2|^\frac{1}{2} + \epsilon \ll cC^{-1}|\ln(r_2-r_M)|^{-k} \ . \notag
\end{equation}
This allows one to move all local energy decay errors for $\phi$ to the LHS of the resulting inequality.  
\end{proof}

We now give a proof of the first two estimates in Proposition \ref{modular_prop}. These are based 
directly on the multiplier calculations of the previous subsection. The last estimate \eqref{exterior_LE}
is  more involved and will be the topic of the next subsection.

\begin{proof}[Proof of estimate \eqref{red_shift_LE}]
For the purposes of showing estimate \eqref{dt_LE} next, it will help to prove a slightly more
informative version of \eqref{red_shift_LE}. Let $L,\bL$ be a negatively N-boosted null pair
(according to Definition \ref{null_pair_defn}), and $r_1,r_3$ sufficiently close to $r_M$ 
so that   Lemma \ref{horizon_lemma} holds on a time independent 
neighborhood $\mathcal{H}$ of $\mathbb{R}_t\times[r_1,r_3]$ with $r=r_\mathcal{T}$ to the exterior
of $\mathcal{H}$.

Next, we choose the 
$X$ multiplier from line \eqref{multipliers} with $X^-(r)=\varphi(r)$ where $\varphi(r)$ is a smooth 
non-negative function with $\varphi\equiv 1$ on $[r_1,r_3]$, $\varphi'\leqslant 0$, and $\varphi\equiv 0$ 
for $r$ outside of $\mathcal{H}$. Denote by $\mathcal{S}$ the space-time slab 
$\{r\geqslant r_1\}\times\{0\leqslant t\leqslant T\}$, and by $\mathcal{S}(t_0)$
the section $\{t=t_0\}\cap \mathcal{S}$ and similarly for $\mathcal{S}(r_0)$.
Then forming the momentum density ${}^{(X)}\!P$ as on line \eqref{momentum}, and integrating over the
$\mathcal{S}$, we have from Stokes' theorem the identity:
\begin{multline}
		\int_{\mathcal{S}(t_0)}\hspace{-.2in} \varphi Q_{T \bL} \sqrt{|h|} dr dV_{\mathbb{S}^2}
		+ \int_{\mathcal{S}(r_1)}\hspace{-.2in} Q_{R\bL}\sqrt{|h|} dV_{\mathbb{S}^2} dt 
		+ \frac{1}{2}\int_\mathcal{S}\hspace{-.05in}
		{}^{(X)}\! \pi^{\bL\, \bL} (\bL\phi)^2\sqrt{|h|}dr dV_{\mathcal{S}^2}dt  \\
		+ \int_\mathcal{S}\hspace{-.05in}\varphi r^{-2}(-\underline{\chi})(N-r^{-1}
		)|\sd\phi|^2 \sqrt{|h|}dr dV_{\mathcal{S}^2}dt 
		\ = \ 
		\int_{\mathcal{S}(0)}\hspace{-.15in} \varphi Q_{T \bL} \sqrt{|h|} dr dV_{\mathbb{S}^2} + 
		\frac{1}{2} \int_\mathcal{S} r^{-2}\underline{\chi}\varphi' |\sd \phi|^2 
		\sqrt{|h|}dr dV_{\mathcal{S}^2}dt \\
		- \int_\mathcal{S} \varphi  \Box^0\phi \bL\phi
		\sqrt{|h|}dr dV_{\mathcal{S}^2}dt \ . \notag
\end{multline}
where we have set
$T  =  -\nabla t $ and  
$R  =  \nabla r  =-\frac{1}{2}\chi\bL -\frac{1}{2} \underline{\chi}L$,  which are the future pointing normals
to $t=0,t_0$ and $r=r_1$. Because the level sets $t=const$ are uniformly space-like we also know that 
$T=T^+L + T^-\bL$ where $T^\pm \approx 1$ on the support of $\varphi$. Thus, for $N>0$ sufficiently
large, the previous 
identity yields the following uniform estimate where the implicit constant does not depend 
$r_1,r_2$:
\begin{multline}
		\lp{\chi_{[r_1,r_3]} (\bL\phi,\sd\phi)|_{t=T}}{L^2(drdV_{\mathbb{S}^2})} +
		\lp{\big((r_M-r_1)^\frac{1}{2}\bL\phi,\sd\phi\big)|_{r=r_1}}{L^2(dV_{\mathbb{S}^2}dt)[0,T]}\\
		+ \lp{\chi_{[r_1,r_3]} (\bL\phi,\sd\phi)}{L^2(drdV_{\mathbb{S}^2}dt)[0,T]}\ \lesssim \ 
		 \lp{\chi_{[r_3,r_\mathcal{T}]}(r-r_\mathcal{T}) \sd\phi }{L^2(drdV_{\mathbb{S}^2}dt)[0,T]}\\
		 + \lp{\chi_{[r_1,r_\mathcal{T}]} (\bL\phi,\sd\phi)|_{t=0}}{L^2(drdV_{\mathbb{S}^2})} 
		 + \lp{\chi_{[r_1,r_\mathcal{T}]}\Box^0\phi}{L^2(drdV_{\mathbb{S}^2}dt)[0,T]}
		 \ . \label{general_red_shift}
\end{multline}
Disregarding the first two terms on the LHS yields the desired result.
\end{proof}

\begin{proof}[Proof of estimate \eqref{dt_LE}]
Here we use the same setup as in the first paragraph of the previous proof. This time we 
use the $Y$ multiplier from line \eqref{multipliers} with $Y^-=q(r)$ where
$q$ is a non-negative function such that  $q'=w_k^2\chi_{[r_1,r_2]}$. 
Thus we have the global bound $C>q>c>0$ for some set of constants $c,C$.
Applying   \eqref{momentum} with ${}^{(Y)}\!P$ yields:
\begin{multline}
		\int_{\mathcal{S}(t_0)}\hspace{-.2in} q Q_{T 0} \sqrt{|h|} dr dV_{\mathbb{S}^2}
		+ \int_{\mathcal{S}(r_1)}\hspace{-.2in} qQ_{R 0}\sqrt{|h|} dV_{\mathbb{S}^2} dt 
		+  \frac{1}{2} \int_\mathcal{S}\hspace{-.05in}(-\underline{\chi}) q_+ w_k^2 \chi_{[r_1,r_2]}
		 (L\phi)^2\sqrt{|h|}dV_{\mathcal{S}^2}drdt  \\
		= \  \frac{1}{2} \int_\mathcal{S}\hspace{-.05in}
		\chi q_-  w_k^2\chi_{[r_1,r_2]}
		|\bL \phi|^2 \sqrt{|h|} dr dV_{\mathcal{S}^2}dt 
		+
		\int_{\mathcal{S}(0)}\hspace{-.15in} q Q_{T 0} \sqrt{|h|} dr dV_{\mathbb{S}^2} - 
		 \int_\mathcal{S} q  \Box^0\phi \partial_t \phi
		\sqrt{|h|}dr dV_{\mathcal{S}^2}dt \ . \notag
\end{multline}
Expanding the boundary terms
using part \ref{dt_rel}) of Lemma \ref{horizon_lemma} we get:
\begin{multline}
		\lp{\chi_{[r_1,r_2]}w_k L\phi}{L^2(dr dV_{\mathbb{S}^2}dt)[0,T]} \ \lesssim \ 
		\lp{\chi_{[r_1,r_2]}w_k(r-r_M)\bL\phi}{L^2(dr dV_{\mathbb{S}^2}dt)[0,T]}\\
		+ (r_M-r_1)^\frac{1}{2} \lp{\chi_{[r_1,r_M]} \bL\phi|_{t=T}}{L^2(drdV_{\mathbb{S}^2})} +
		(r_M-r_1)^\frac{1}{2}\lp{\big((r_M-r_1)^\frac{1}{2}
		\bL\phi,\sd\phi\big)|_{r=r_1}}{L^2(dV_{\mathbb{S}^2}dt)[0,T]}\\
		+ \lp{( \ud \phi, r^{-1}\sd \phi)|_{t=0}}{L^2(drdV_{\mathbb{S}^2})} + 
		\epsilon^{-1}\lp{\chi_{[r_1,\infty)}w_{ln}\Box^0\phi}{LE^*_0[0,T]}
		+ \epsilon \lp{\chi_{[r_1,\infty)} (w_{ln})^{-1} \partial_t \phi }{LE_0[0,T]} \ . \notag
\end{multline}
By itself this estimate is not yet of the form \eqref{dt_LE}. However, upon combination with
$(r_M-r_1)^\frac{1}{2}\times$\eqref{general_red_shift} the   future boundary 
terms at $t=T$ and $r=r_1$ on the RHS of the last line above can
be traded for  the space-time error $(r_M-r_1)^\frac{1}{2} \lp{\chi_{[r_3,r_\mathcal{T}]}(r-r_\mathcal{T}) \sd\phi }
{L^2(drdV_{\mathbb{S}^2}dt)[0,T]}$ which then yields the desired result.
\end{proof}


\subsection{Proof of the main component estimate \eqref{exterior_LE}}

 This estimate is most easily 
recast  in terms of the Regge-Wheeler type coordinates of Lemma \ref{exterior_lemma}.
We define the space $LE_{RW}$ and $E_{RW}$ by:
\begin{align}
		\lp{\phi}{LE_{RW}[0,T]} \ &= \ \sup_{ j\geqslant 0}\lp{2^{-\frac{1}{2}j}\chi_{ j} \phi}
		{L^2(dr_* dV_{\mathbb{S}^2}ds)[0,T]} \ , \notag\\
		E_{RW}(\phi[s_0]) \ &= \ \lp{\big(\partial_s \phi, \partial_{r_*}\phi, r^{-\frac{3}{2}}(r-r_M)^\frac{1}{2}
		\sd\phi\big)|_{s=s_0}}{L^2(dr_* dV_{\mathbb{S}^2})}^2 \ , \notag
\end{align}
where $\chi_j$ for $j\geqslant 1$ cuts off smoothly where $|r_*|\approx  2^j$, and 
where $\chi_0$ is supported where $|r_*|\lesssim 1$.
For this section all estimates will be in terms of a rescaled 
version of $\Box^0$,   which in Regge-Wheeler coordinates is:
\begin{equation}
		\Box^0_{RW} \ =\  -\partial_s^2 + \partial_{r^*}^2 + V(r_*)\sDelta \ = \ 
		-\la\partial_t ,\partial_t\ra_h \Box^0 \ , \qquad
		V \ = \ -r^{-2} \la\partial_t ,\partial_t\ra_h  \ . \notag
\end{equation}
Then our main estimate  here reads:

\begin{proposition}[Local energy decay estimates for $\Box^0_{RW}$]\label{RW_local_smoothing _prop}
Let $\phi$ be a smooth function which vanishes for $r_*\to \pm \infty$. Then one has
the uniform bound:
\begin{multline}
		\lp{ \big( \partial_{r_*}\phi,(w_{ln})^{-1}\partial_s\phi\big)  }{LE_{RW}[0,T]} + 
		\lp{ (w_{ln})^{-1}r^{-2}(r-r_M)^\frac{1}{2}\sd\phi }{L^2[0,T]}\\
		\lesssim \ E^\frac{1}{2}_{RW}(\phi[0]) + 
		\lp{w_{ln} \Box^0_{RW}\phi}{LE^*_{RW}[0,T]} \ , \label{RW_LE}
\end{multline}
where $w_{ln}=(1+ \big| \ln |r_*| \big|)/\big| \ln (2+ |r_*|) \big|$. 

In addition, if $\overline{\phi}$
is any spherically symmetric function, 
one has the following estimate uniform in $r_*^0<-1$:
\begin{multline}
		\lp{ \chi_{[r_*^0,\infty)}(\partial_s\overline{\phi}, 
		\partial_{r_*}\overline{\phi}) }{LE_{RW}[0,T]} \ \lesssim \ 
		\lp{ \chi_{[2r_*^0,r_*^0]}(\partial_s\overline{\phi}, 
		\partial_{r_*}\overline{\phi}) }{LE_{RW}[0,T]}\\
		+ E^\frac{1}{2}_{RW,r_*>2 r_*^0}(\overline{\phi}[0]) + 
		\lp{\chi_{[2 r_*^0,\infty)} \Box^0_{RW}\overline{\phi}}{LE^*_{RW}[0,T]} \ , \label{sph_RW_LE}
\end{multline}
\end{proposition}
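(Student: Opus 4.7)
Both inequalities are positive-commutator (Morawetz) estimates for the operator $\Box^0_{RW} = -\partial_s^2 + \partial_{r_*}^2 + V(r_*)\sDelta$ on the full line $r_* \in (-\infty,\infty)$; since $\phi$ is assumed to vanish as $r_*\to\pm\infty$, no spatial boundary terms appear. I will first sketch the argument for the main estimate \eqref{RW_LE}, after which estimate \eqref{sph_RW_LE} will follow as a simpler 1D localization without trapping.

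For \eqref{RW_LE}, the plan is to test $\Box^0_{RW}\phi = F$ against a Morawetz multiplier $\mathcal{X}\phi = a(r_*)\partial_{r_*}\phi + b(r_*)\phi$ together with an energy multiplier proportional to $\partial_s\phi$, along the physical-space lines of \cite{MMTT_sch} and informed by the spectral analysis of \cite{BS_uniform}. After integration by parts in $s$ and $r_*$ and in the angular variables the bulk contribution takes the schematic form
\begin{equation*}
	\int \bigl[\, a'(\partial_{r_*}\phi)^2 + a'(\partial_s\phi)^2 + \tfrac{1}{2}(aV)'|\sd\phi|^2 - \tfrac12 b''\phi^2 \,\bigr]\, dr_*\, ds\, dV_{\mathbb{S}^2}
\end{equation*}
plus boundary fluxes at $s=0,T$ and lower-order terms. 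The function $a(r_*)$ is chosen to be monotone with $a' \geq c_0 > 0$ everywhere and $a(0)=0$, so the radial bulk is non-degenerate (explaining why $\partial_{r_*}\phi$ appears in \eqref{RW_LE} without the $w_{ln}^{-1}$ weight), and the angular coefficient $(aV)' = a'V + aV'$ is strictly positive away from $r_*=0$ because $a$ and $-V'$ share a sign there.

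The main obstacle is the trapping phenomenon at $r_*=0$. Microlocally, for frequencies with $\omega^2 \approx \ell(\ell+1)V(0)$, the principal Morawetz commutator vanishes to second order at the trapped set, forcing a logarithmic loss on the time-derivative and angular bounds. Using the normal hyperbolicity $V''(0)<0$ recorded in Lemma~\ref{exterior_lemma}, I would introduce an auxiliary subprincipal multiplier built from a logarithmic cutoff of the stable/unstable defining functions of the trapped set; its commutator with $\Box^0_{RW}$ produces an extra positive bulk term with weight exactly $w_{ln}^{-2}$, yielding the claimed degenerate bound on $\partial_s\phi$ and $r^{-2}(r-r_M)^{1/2}\sd\phi$. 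Away from $r_*=0$, the asymptotics of $V$ from Lemma~\ref{exterior_lemma}(v) (exponential as $r_*\to-\infty$, $V\sim r_*^{-2}$ as $r_*\to+\infty$) allow the multiplier to transition to a pure shift on one side and to a conformal Morawetz multiplier $a\sim r_*/(1+r_*)$ on the other, yielding the full LE decay there. Boundary terms at $s=0,T$ are controlled by $E_{RW}^{1/2}(\phi[0])$ and by Cauchy--Schwarz against $\lp{w_{ln}\Box^0_{RW}\phi}{LE^*_{RW}[0,T]}$ using the $(LE_{RW}, LE^*_{RW})$ duality.

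Finally, \eqref{sph_RW_LE} is substantially easier: for spherically symmetric $\overline{\phi}$ one has $\sDelta\overline{\phi}=0$, so $\Box^0_{RW}\overline{\phi} = (-\partial_s^2+\partial_{r_*}^2)\overline{\phi}$ is the flat $(1{+}1)$ wave operator with no trapping. I would multiply $\overline{\phi}$ by a smooth cutoff $\chi(r_*)$ equal to $1$ for $r_*\geq r_*^0$ and supported in $r_*\geq 2r_*^0$, apply a standard 1D Morawetz multiplier such as $\arctan(r_*-r_*^0)\partial_{r_*}$ to $\chi\overline{\phi}$, and observe that the commutator $[\partial_{r_*}^2,\chi]$ is supported exactly in the transition strip $[2r_*^0,r_*^0]$; this produces the first term on the RHS of \eqref{sph_RW_LE}. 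No logarithmic weight appears since the angular derivative is absent.
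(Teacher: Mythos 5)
Your proposal follows the same broad multiplier philosophy as the paper, but it has genuine gaps at the two points where the real work lies. First, the schematic bulk identity is wrong: testing $\Box^0_{RW}\phi$ against $a\partial_{r_*}\phi+\tfrac12 a'\phi$ gives a bulk of the form $\int\big(a'\phi_{r_*}^2-\tfrac14 a'''\phi^2-\tfrac12 aV'|\sd\phi|^2\big)$; the $(\partial_s\phi)^2$ contributions cancel between the two pieces of the multiplier, so no positive $a'(\partial_s\phi)^2$ term is available, and the angular coefficient is $-\tfrac12 aV'$, not $\tfrac12(aV)'$ (your own justification is inconsistent: if $a$ and $-V'$ share a sign then $aV'\leqslant 0$, so $(aV)'=a'V+aV'$ becomes negative for large $|r_*|$ where $a'$ decays). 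Consequently the bound on $(w_{ln})^{-1}\partial_s\phi$ must be produced separately; the paper does this by pairing the equation with $2^{-j}(w_1)^{-2}\chi_j\phi$ and using near-conservation of energy (its Step 3). Also, a choice with $a'\geqslant c_0>0$ globally is incompatible with $a$ bounded (needed to control the $s=0,T$ fluxes by the energy) and would assert a false unweighted spacetime $L^2$ bound; the admissible choices are $a'\approx\langle r_*\rangle^{-k}$, and then the zeroth-order term $a'''\phi^2$ is delicate: the paper handles it with a singular weight whose $a'''$ contains a delta at $r_*=0$, absorbable only after reducing to $\overline{\phi}=0$ and using the spherical Poincar\'e inequality \eqref{S2_poincare} — your generic $-\tfrac12 b''\phi^2$ term is left untreated. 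Most importantly, the sharp logarithmic weight at the trapped set is the crux of \eqref{RW_LE}, and your proposal only gestures at it: an ``auxiliary subprincipal multiplier built from a logarithmic cutoff of the stable/unstable defining functions'' whose commutator ``produces an extra positive bulk term with weight exactly $w_{ln}^{-2}$'' is not a standard construction and no computation is offered that would make it positive. The paper does not reprove this step either: it first establishes the degenerate estimate \eqref{degen_RW} with the polynomial weight $w_1=r(r-r_\mathcal{T})^{-1}$, and then upgrades $w_1$ to $w_{ln}$ by quoting the local trapped-set estimate of \cite{MMTT_sch} (Proposition \ref{Daniels_prop}, parts a and b) together with a truncation/gluing argument. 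Without either that black box or a genuine replacement for it, the logarithmic claim in \eqref{RW_LE} is unproved.

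For \eqref{sph_RW_LE} there is a second, concrete problem: truncating the solution rather than the multiplier generates the commutator $[\partial_{r_*}^2,\chi]\overline{\phi}=2\chi'\partial_{r_*}\overline{\phi}+\chi''\overline{\phi}$, which contains the undifferentiated $\overline{\phi}$. For spherically symmetric functions there is no Poincar\'e gain — constants are exact finite-energy obstructions, as noted in the remark following Theorem \ref{Main_LE_prop} — so $\chi''\overline{\phi}$ is not controlled by the right-hand side of \eqref{sph_RW_LE}, which involves only first derivatives of $\overline{\phi}$ in the strip, the energy, and the source. The paper avoids this entirely by keeping $\overline{\phi}$ intact and using sharply truncated multipliers $\chi_{(r_*^0,\pm 2^j)}\partial_{r_*}$ (as in its Steps 2 and 3 for the exterior estimate), whose errors are boundary fluxes quadratic in first derivatives and hence match the strip term on the right of \eqref{sph_RW_LE}. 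You would need either this device or a normalization argument removing the constant mode before your cutoff scheme could close.
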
\ret

Before giving a proof we use these estimate to establish \eqref{exterior_LE}.

\begin{proof}[Proof that estimates \eqref{RW_LE} and \eqref{sph_RW_LE}
implies \eqref{exterior_LE}]
Given a solution of $\Box^0\phi=G$, will treat the spherically symmetric part $\overline{\phi}$
separately from the non-spherical part $\phi-\overline{\phi}$.

\step{1}{Control of the spherical part}
That \eqref{sph_RW_LE} implies \eqref{exterior_LE} for $\overline{\phi}$
is an immediate consequence of 
the coordinate asymptotics in parts i) and ii) of Lemma \ref{exterior_lemma}. First notice 
that by the asymptotic formulas of part i) of Lemma \ref{exterior_lemma}
the null vector-field $\partial_s + \partial_{r_*}$ is regular and non-vanishing
up to $r=r_M$ in the original $(t,r)$ coordinates. Therefore we have that:
\begin{equation}
		L \ = \ \partial_s + \partial_{r_*} \ , \qquad\qquad
		\bL \ = \ \Omega^{-2}(\partial_s-\partial_{r_*}) \ 
		= \ O\big(r(r-r_M)^{-1}\big)(\partial_s-\partial_{r_*}) \ , \notag
\end{equation}
is a regular null pair throughout  $r>r_M$.
Next, note that one has:
\begin{equation}
		dsdr_* \ = \ \frac{dr_*}{dr}dtdr \ = \ O\big(r(r-r_M)^{-1}\big)dtdr  \ = \ 
		O\big(\Omega^{-2} \big)dtdr \ . \notag
\end{equation}
Combining this with the asymptotics
$r_*=A\ln (r-r_M)+O(1)$ as $r_*\to-\infty$ and $r_*=r+B\ln(r)+O(r^{-1})$ as $r_*\to \infty$,
we see that:
\begin{align}
		\lp{w_k\chi_{[r_M,r_\mathcal{T}]} \psi}{LE_0 } \ &\lesssim \ 
		\lp{\chi_{(-\infty,0]} \psi}{LE_{RW} } \ \lesssim \  \lp{w_{-k}\chi_{[r_M,r_\mathcal{T}]} \psi}{LE_0 }
		\ , \notag\\
		\lp{\chi_{[r_\mathcal{T},\infty)} \psi}{(LE_0,LE^*_0) } \ 
		&\approx \  \lp{\chi_{[0,\infty)} \psi}{(LE_{RW},LE^*_{RW}) } 
		\ , \qquad
		\lp{\chi_{(-\infty,0]} \psi}{LE_{RW}^* } \ 
		\lesssim \  \lp{w_{-k}\chi_{[r_M,r_\mathcal{T}]} \psi}{LE^*_0 } \ , 
		\notag
\end{align}
where the implicit constants depend on $k>1$. In addition note that the logarithmic part of $w_k$
is essentially constant when $r_*\approx -2^j$, so it may be removed from the integral 
in the first term on the RHS of \eqref{sph_RW_LE} above.
Finally, note that by standard local estimates one can control $E_{RW,r_*>2 r_*^0}(\phi[0])$
in terms of the standard energy $E_{r>r_M}(\phi[0])$.

\step{2}{Control of the non-spherical part}
For this we use exactly the same estimates as above with a slight twist. We claim that for $C>0$
sufficiently large one has the analogous bound to \eqref{sph_RW_LE}:
\begin{multline}
		\lp{\chi_{[r_*^0,\infty)} \big( \partial_{r_*}(\phi-\overline{\phi}),
		(w_{ln})^{-1}\partial_s( \phi-\overline{\phi}) \big)}{LE_{RW}[0,T]} + 
		\lp{\chi_{[r_*^0,\infty)} (w_{ln})^{-1} r^{-2}(r-r_M)^\frac{1}{2}
		\sd(\phi-\overline{\phi}) }{L^2[0,T]}\\
		 \ \lesssim \ 
		\lp{ \chi_{[2Cr_*^0,r_*^0]} \partial_{r_*}(\phi-\overline{\phi}) }{LE_{RW}[0,T]}
		+ E^\frac{1}{2}_{RW,r_*>2 C r_*^0}\big( (\phi -\overline{\phi})[0]\big) + 
		\lp{\chi_{[2 Cr_*^0,\infty)} \Box^0_{RW}(\phi-\overline{\phi})}{LE^*_{RW}[0,T]} 
		\ . \label{truncated_LE}
\end{multline}

To prove it first apply \eqref{RW_LE} to the quantity $\chi_{r_*>2Cr_*^0}(\phi-\overline{\phi})$, where 
$\chi_{r_*>2Cr_*^0}\equiv 1$ for $r_*>Cr_0^*$ with standard derivative bounds.
This generates the above estimate  modulo  two errors:
\begin{equation}
		Error_1 \ = \ \lp{\chi_{[2Cr_*^0,Cr_*^0]}\langle r_*\rangle^{-1} (\phi
		-\overline{\phi})(0)}{L^2} \ , \qquad
		Error_2 \ = \ \lp{\chi_{[2Cr_*^0,Cr_*^0]}\langle r_*\rangle^{-1} (\phi-\overline{\phi})}{LE_{RW}[0,T]} 
		\ . \label{truncation_error_line}
\end{equation}
To handle these terms we use the sharp Hardy type estimates \eqref{index_hardy} and 
\eqref{improved_hardy} of the Appendix respectively.  

By an immediate application of \eqref{index_hardy} with $s=1$ and $p=2$, followed by the spherical
Poincare estimate \eqref{S2_poincare} to control the undifferentiated term on the RHS, we have:
\begin{equation}
		Error_1 \ \lesssim \ E^\frac{1}{2}_{RW,r_*>2 C r_*^0}\big( (\phi -\overline{\phi})[0]\big) \ . \notag
\end{equation}
 
To handle $Error_2$ apply \eqref{improved_hardy} with $C =2^{j-i}$ and $s=1$ which yields:
\begin{equation}
		Error_2 \ \lesssim \ C^\frac{1}{2}
		\lp{\chi_{[-2Cr_*^0,-r_*^0]}
		\partial_{r_*}(\phi-\overline{\phi})}{LE[0,T]}+
		C^{-\frac{1}{2}}\big(\lp{\chi_0 \sd\phi }{LE[0,T]} + \lp{\chi_{[-r_*^0,-1]}
		\partial_{r_*}(\phi-\overline{\phi})}{LE[0,T]}\big) \ . \notag
\end{equation}
For $C\gg 1$   the second term of the RHS above may safely be absorbed 
into the LHS of estimate \eqref{truncated_LE}.
\end{proof}

We now give the proofs for Proposition \ref{RW_local_smoothing _prop}.

\begin{proof}[Proof of estimate \eqref{RW_LE}]
In light of  the second estimate \eqref{sph_RW_LE}, which is independently proved below,
we may assume that  $\overline{\phi}=0$. The 
method is to combine the analysis of  \cite{BS_uniform} with \cite{MMTT_sch}.

\step{1}{Proof of a degenerate local energy decay estimate}
First we show that for all $k>1$:
\begin{multline}
		\lp{\langle r_*\rangle^{-\frac{1}{2}k} \partial_{r_*} \phi}{L^2[0,T]} +
		\lp{ (w_1)^{-1} r^{-2} (r-r_M)^\frac{1}{2}\sd\phi }{L^2[0,T]}\\ 
		\lesssim \ C^{-1}\lp{\partial_{r_*} \phi }{LE_{RW}[0,T]}+
		\sup_{0\leqslant s\leqslant T} 
		E^\frac{1}{2}_{RW}(\phi[s]) + C \lp{ \Box^0_{RW}\phi }{LE^*_{RW}[0,T]} \ , \label{degen_RW}
\end{multline}
where the implicit constant depends on $k$ but is uniform in $C>0$, and 
$w_1=r(r-r_\mathcal{T})^{-1}$.

For a $C^1(\mathbb{R})$  (but not necessarily smooth) function $a(r_*)$ we form the operator 
$A=op^w(ia\xi_{r_*})=a\partial_{r_*}+\frac{1}{2}a'$. Then for test functions $\phi$ we 
integrating the quantity $\Box^0_{RW}\phi \cdot A\phi$ over the space-time slab $0\leqslant  t \leqslant T$.
After some integration by parts we arrive at the Virial type identity
(see Section 3  of \cite{BS_uniform} for further details):
\begin{equation}
		\int_0^T\!\!\!\!\int_{\mathbb{R}\times\mathbb{S}^2} \big(
		a' \phi_{r_*}^2 - \frac{1}{4}a'''\phi^2 - \frac{1}{2}a V' |\sd\phi|^2
		\big)dr_* dV_{\mathbb{S}^2}ds  =  
		-\int_{\mathbb{R}\times\mathbb{S}^2}\!\!\!\!\!\! \partial_s\phi A\phi
		dr_* dV_{\mathbb{S}^2}\Big|_0^T -
		\int_0^T\!\!\!\!\int_{\mathbb{R}\times\mathbb{S}^2}\!\!\!\!\!
		\Box^0_{RW}\phi A\phi dr_* dV_{\mathbb{S}^2}ds \ . \notag
\end{equation}
Here $a'''$ is interpreted in the sense of distributions if necessary. If we choose $a$ so that:
\begin{equation}
		\tag{Assumption 1} 
		  a(r_*) \ \approx \ \frac{r_*}{1+|r_*|} \ , 
		  \qquad\qquad a'(r_*) \ \lesssim \ \frac{1}{1+|r_*|} \ ,
		  \notag
\end{equation}
then  using the Hardy/Poincare  estimate \eqref{index_hardy} with $s=1$ and $p=2$
 the boundary terms at $t=0,T$ are estimated 
by the energy. In addition, the non-negative
weight $-aV'$ is then consistent with estimate \eqref{degen_RW}.
Thus, by using Young's inequality to split the last term in the previous identity above, 
and then estimate \eqref{index_hardy} with $s=\frac{3}{2}$ and $p=\infty$ to bound $a'\phi$ 
in terms of $\partial_{r_*}\phi$
and a local $\sd\phi$ contribution (which may be absorbed to the LHS), we have:
\begin{multline}
		\int_0^T\!\!\!\!\int_{\mathbb{R}\times\mathbb{S}^2} \big(
		a' \phi_{r_*}^2 - \frac{1}{4}a'''\phi^2 \big)dr_* dV_{\mathbb{S}^2}ds
		+ \lp{r^{-3}(r-r_\mathcal{T})
		(r-r_M)^\frac{1}{2}\sd\phi }{L^2[0,T]}^2 \\ 
		\lesssim \ 
		C^{-2}\lp{\partial_{r_*} \phi }{LE_{RW}[0,T]}^2+
		\sup_{0\leqslant s\leqslant T}E(\phi[s]) + C^2
		\lp{ \Box^0_{RW}\phi }{LE^*_{RW}[0,T]}^2
		\ . \label{rough_RW_start}
\end{multline}

To deal with the $\partial_{r_*}\phi$ terms on the LHS of \eqref{rough_RW_start}
we need to further constrain $a$. To see what is the correct choice, first note that one
has the following Hardy type inequality for smooth $a$ and fixed $\lambda>0$:
\begin{equation}
		\int \Big(a'''-\lambda \frac{(a'')^2}{a'}\Big)\phi^2dr_* \ \leqslant \ 
		\lambda^{-1} \int a' \phi_{r_*}^2 dr_* \ . \label{funny_hardy}
\end{equation}
This  follows at once from integrating  $\partial_{r_*}(a''\phi^2)$ and then applying Young's
inequality to the term containing $\phi_{r_*}$. From this we see that an ideal set of assumptions 
would be:
\begin{equation}
		\tag{Assumption 2} 
		a' \ \approx \ \langle r_*\rangle^{-k}  \ , \notag
\end{equation}
\begin{equation}
		\tag{Assumption 3}
		\frac{(a'')^2}{a'} \ \leqslant \ \gamma  a'''  \ , \qquad  0 < \gamma < 1 \ . \notag
\end{equation}
Taken together, and setting $\lambda^{-1}=2\gamma$ in \eqref{funny_hardy}, these assumptions
 would give the lower bound:
\begin{equation}
		c_{\gamma} \lp{\langle r_*\rangle^{-\frac{1}{2}k}\partial_{r_*}\phi}{L^2(dr_*)}^2 \ \leqslant \ 
		(1-\gamma)\int a' \phi_{r_*}^2 dr_* \ \leqslant \ \int(a'\phi_{r_*}^2 - \frac{1}{4}a'''\phi^2)dr_* \ ,
		\label{main_lower}
\end{equation}
which would be sufficient for our purposes. 

Unfortunately it turns out that Assumptions 2 and 3 as stated are
inconsistent with each other due to inflections of $a'$. A natural way around this
is to eliminate inflection points by working with a singular weight $a(r_*)$.
In other words we trade Assumption 3  for the modification:
\begin{equation}
		\tag{Assumption 3'}
		a'(r_*) \ = \ \frac{1}{(1+|\epsilon r_*|)^k} \ , \notag
\end{equation}
in which case the original Assumption 3 holds for $r_*\neq 0$ with $\gamma =\frac{k}{k+1}$, and 
\eqref{funny_hardy} still holds in the sense of distributions. The difference now is that
$a'''$ contains a delta function, so
\eqref{main_lower} must be replaced by:
\begin{equation}
		(1-\gamma)\int a' \phi_{r_*}^2 dr_* 
		\ \leqslant \ \int(a'\phi_{r_*}^2 - \frac{1}{4}a'''\phi^2)dr_* + \epsilon k\phi^2|_{r_*=0} \ . \notag
\end{equation}
The last term on the RHS is easily eliminated by the assumption that $\overline{\phi}=0$ which gives:
\begin{equation}
		\int_{\mathbb{S}^2} \phi^2|_{r_*=0} dV_{\mathbb{S}^2} \ \lesssim \ 
		\lp{\chi_0( \partial_{r_*}\phi,r_*\sd\phi) }{L^2(dr_*dV_{\mathbb{S}^2})}^2 \ . \notag
\end{equation}
For $\epsilon\ll 1$ sufficiently small this allows us to absorb $\phi^2|_{r_*=0}$ and we arrive at
 \eqref{degen_RW}.

\step{2}{Improvement of the weights on $\partial_{r_*}\phi$ as $r_*\to \pm \infty$}
Next, we improve estimate \eqref{degen_RW} to the following:
\begin{equation}
		\lp{ \partial_{r_*} \phi}{LE_{RW}[0,T]} +
		\lp{ (w_1)^{-1} r^{-2} (r-r_M)^\frac{1}{2}\sd\phi }{L^2[0,T]}\
		\lesssim \  		\sup_{0\leqslant s\leqslant T} 
		E^\frac{1}{2}_{RW}(\phi[s]) +  \lp{ \Box^0_{RW}\phi }{LE^*_{RW}[0,T]} \ . \label{degen_RW'}
\end{equation}

To prove this we  integrate the quantities $\Box_{RW}^0\phi \cdot a_j^\pm \partial_{r_*}\phi$
over the slabs $0\leqslant s\leqslant T$, where we choose $(a_j^\pm)'=\pm 2^{-j}\chi_j\chi_{\pm r_*> 1}$ 
for $j\geqslant 1$
and $a_j^\pm\equiv 0 $ in a neighborhood of $r_*=0$.
 This  gives the bound:
\begin{multline}
		\lp{ \chi_{|r_*|>2}\partial_{r_*} \phi}{LE_{RW}[0,T]} \ \lesssim \ 
		\lp{ \chi_{|r_*|>1} r^{-2} (r-r_M)^\frac{1}{2}\sd\phi }{L^2[0,T]}\\
		+ \lp{\chi_{|r_*|<1}\partial_{r_*}\phi}{L^2[0,T]} + 
		\sup_{0\leqslant s\leqslant T}E^\frac{1}{2}(\phi[s]) +
		\lp{ \Box^0_{RW}\phi }{LE^*_{RW}[0,T]} \ , \notag
\end{multline}
which can be added safely into \eqref{degen_RW} in order to produce \eqref{degen_RW'}.

\step{3}{Removal of the future energy and addition of $\partial_s\phi$}
Our goal here is the more complete estimate:
\begin{multline}
		\lp{ \partial_{r_*} \phi}{LE_{RW}[0,T]} +
		\lp{(w_1)^{-1} \partial_s \phi}{LE_{RW}[0,T]}+
		\lp{ (w_1)^{-1} r^{-2} (r-r_M)^\frac{1}{2}\sd\phi }{L^2[0,T]}\\
		\lesssim \  	
		E^\frac{1}{2}_{RW}(\phi[0]) +  \lp{w_1 \Box^0_{RW}\phi }{LE^*_{RW}[0,T]} \ . \label{degen_RW''}
\end{multline}

To prove this bound first note that by the usual calculations 
used to produce the conservation of energy one has:
\begin{equation}
		\sup_{0\leqslant s\leqslant T}E_{RW}(\phi[s]) \ \lesssim \ 
		 \lp{w_1 \Box^0_{RW}\phi }{LE^*_{RW}[0,T]} \lp{(w_1)^{-1} \partial_s \phi}{LE_{RW}[0,T]} \ . \notag
\end{equation}
On the other hand, to control the quantity $\partial_s\phi$ one  integrates
$2^{-j}(w_1)^{-2}\chi_j \phi \Box^0_{RW}\phi$ over the slabs $0\leqslant s\leqslant T$. After
some integration by parts, using Poincare/Hardy type estimates to bound undifferentiated
terms and boundary contributions, and then supping over $j\geqslant 0$,
one has:
\begin{multline}
		\lp{(w_1)^{-1} \partial_s \phi}{LE_{RW}[0,T]} \ \lesssim \ 
		\lp{ \partial_{r_*} \phi}{LE_{RW}[0,T]} +
		\lp{ (w_1)^{-1} r^{-2} (r-r_M)^\frac{1}{2}\sd\phi }{L^2[0,T]}\\ 
		+  \sup_{0\leqslant s\leqslant T} 
		E^\frac{1}{2}_{RW}(\phi[s]) +\lp{ \Box^0_{RW}\phi }{LE^*_{RW}[0,T]} \ . \notag
\end{multline}
Adding the last two lines together with \eqref{degen_RW'} yields \eqref{degen_RW''}.

\step{4}{Improvement of the local weight}
Our final task is  to establish \eqref{degen_RW''} with the local weight $w_1$
replaced by the weight $w_{ln}$ as written in estimate \eqref{RW_LE} above. To do this
we  recall the main local estimate of \cite{MMTT_sch} which can be stated in the following form:

\begin{proposition}\label{Daniels_prop}
The following two statements hold:
\begin{enumerate}[a)]
	\item  Let $\phi$ be a space-time function  which is supported in the region
	$|r_*|\leqslant C$ for some fixed $C>0$. Then one has the bound:
	\begin{equation}
		\lp{\partial_{r_*} \phi}{L^2[0,T]} +
		\lp{(w_{ln})^{-1}(\partial_s \phi,\sd\phi)}{L^2[0,T]} \ \lesssim \ 
		E^\frac{1}{2}(\phi[0]) + \lp{w_{ln}  \Box^0_{RW}\phi }{L^2[0,T]} \ . \notag
	\end{equation} 
	\item  Let $f$ be  a space-time supported in the region
	$|r_*|\leqslant C$ for some fixed $C>0$ with the property that
	$w_{ln}f\in L^2[0,T]$. Then there exists a function $\phi$ defined for
	$s\in[0,T]$ and supported
	in the region $|r_*|\leqslant 2C$ such that $\Box^0_{RW}\phi-f$ is supported away from $|r_*|\leqslant 
	\frac{1}{2}C$ and such that:
	\begin{equation}
			\sup_{0\leqslant s\leqslant T} E^\frac{1}{2}(\phi[s])
			+ \lp{\partial_{r_*} \phi}{L^2[0,T]}
			+\lp{(w_{ln})^{-1}(\partial_s\phi,\sd\phi) }{L^2[0,T]}
			+ \lp{ \Box^0_{RW}\phi-f }{L^2[0,T]} \! \lesssim \! 
			\lp{w_{ln}f}{L^2[0,T]} \ . \label{dans_truncation_est}
	\end{equation}
\end{enumerate}
\end{proposition}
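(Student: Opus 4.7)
My plan is to prove part (a) by a positive commutator argument in phase space, then deduce part (b) by an explicit truncation construction that uses part (a) as input.

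For part (a), the starting point is the Virial identity from Step 1 of our proof of \eqref{RW_LE}. Since $\phi$ is now compactly supported in $|r_*|\leqslant C$, the decay conditions on the multiplier weight $a(r_*)$ at infinity can be dropped; taking $a$ bounded and odd with $a' > 0$ away from $r_* = 0$ immediately yields control of $\partial_{r_*}\phi$ in $L^2[0,T]$ together with a positive angular contribution $-\tfrac{1}{2}aV'|\sd\phi|^2$ away from the trapped point (since $V$ is strictly monotone on each side of $r_* = 0$ by part iv) of Lemma \ref{exterior_lemma}). What remains is to control $\partial_s\phi$ and $\sd\phi$ in a neighborhood of the trapped set, where the naive multiplier degenerates.

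To handle trapping, I would add a pseudodifferential multiplier concentrated near $r_* = 0$, exploiting normal hyperbolicity. The Hamilton flow of $p = -\tau^2 + \xi_{r_*}^2 + V(r_*)|\sxi|^2$ has a hyperbolic fixed point at $(r_* = 0, \xi_{r_*} = 0)$ on each positive level set $\{|\sxi|^2 = \text{const}\}$, with stable and unstable manifolds defined to leading order by $\xi_{r_*} \pm \lambda_0|\sxi|\,r_* = 0$, where $\lambda_0 = \sqrt{-V''(0)/2} > 0$. A Lyapunov-type symbol built from the product of quantizations of these defining functions, smoothed and cut off at frequency $|\sxi|\sim 1$, yields a commutator $i[\Box^0_{RW},A]$ that is positive definite on a full phase-space neighborhood of the trapped set. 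The unavoidable logarithmic loss $w_{ln}$ arises because the natural scale on which $a$ is normalized at frequency $|\sxi|$ is $|\sxi|^{-1}$, and matching these normalizations across dyadic frequency shells costs exactly one logarithmic factor. Summing this microlocal positive commutator with the global Virial bound, estimating the $\Box^0_{RW}\phi \cdot A\phi$ term by Cauchy-Schwarz into the weighted norms, and closing boundary contributions at $s = 0, T$ via the standard energy identity yields part (a).

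For part (b), I construct $\phi$ as a truncated Duhamel solution. Let $\psi$ solve $\Box^0_{RW}\psi = f$ with $\psi[0] = 0$ on the full strip; by finite speed of propagation (the principal part is $-\partial_s^2 + \partial_{r_*}^2$), $\psi$ is supported in $|r_*|\leqslant C + s$. Let $\chi(r_*)$ be a smooth cutoff with $\chi \equiv 1$ on $|r_*|\leqslant 3C/2$ and $\chi \equiv 0$ on $|r_*|\geqslant 2C$, and set $\phi = \chi\psi$. Then $\phi$ is supported in $|r_*|\leqslant 2C$, vanishes at $s = 0$, and $\Box^0_{RW}\phi - f = [\Box^0_{RW},\chi]\psi$ lives in $3C/2 \leqslant |r_*|\leqslant 2C$, hence away from $|r_*|\leqslant C/2$ as required. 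All norms of $\phi$ on the left-hand side of \eqref{dans_truncation_est} are then controlled by applying the version of part (a) adapted to $|r_*|\leqslant 2C$ (which is obtained by the same proof with $C$ doubled), modulo estimating the commutator source $[\Box^0_{RW},\chi]\psi$ in weighted $L^2$. This last estimate is handled by an additional application of part (a) to a truncation of $\psi$ itself, using finite-speed propagation to keep the truncation errors supported outside the commutator region.

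The main obstacle will be the phase-space construction in part (a): engineering a single multiplier whose commutator dominates both $\partial_s\phi$ and $\sd\phi$ simultaneously with exactly the weight $w_{ln}$ and no polynomial-in-frequency loss. This requires a careful symbolic construction of the form $a \sim \log\langle\sxi\rangle \cdot \arctan(\lambda_0 r_*|\sxi|/\langle\tau\rangle)$ or a variant, with attention to smoothness at the low- and high-frequency boundaries and to the potential tail $V \sim r_*^{-2}$ at spatial infinity. Once this microlocal calculus is in place, the remaining steps — combining with the Virial bound, closing boundary terms, and handling zeroth-order contributions via Hardy-type estimates like \eqref{index_hardy} — proceed along the lines of Steps 1–3 of the previous estimate.
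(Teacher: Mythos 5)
First, note that the paper does not prove Proposition \ref{Daniels_prop} at all: it is quoted verbatim from Proposition 3.3 of \cite{MMTT_sch}, and the text only records that the cited proof is purely local near $r_*=0$ and uses just the nondegenerate maximum of $V$. Your sketch of part a) --- a Virial multiplier away from the trapping plus a positive commutator with a logarithmically normalized symbol adapted to the stable/unstable directions of the normally hyperbolic point --- is in the same spirit as that cited argument, but it is only a sketch: the one step that carries all the content (a single symbol whose commutator with $\Box^0_{RW}$ is positive near the trapped set with exactly a $w_{ln}$ loss, uniformly over frequencies and with control of the lower order and boundary terms) is asserted rather than constructed, and you yourself flag it as ``the main obstacle.'' As it stands, part a) is a plausible program, not a proof.

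Part b) as proposed has a genuine gap, and it is essentially a circularity. You set $\phi=\chi\psi$ where $\Box^0_{RW}\psi=f$ with zero data, so that $\Box^0_{RW}\phi-f=[\Box^0_{RW},\chi]\psi$. Every term you must now bound --- the unweighted $L^2[0,T]$ norm of $[\Box^0_{RW},\chi]\psi$ demanded by \eqref{dans_truncation_est}, and the same commutator appearing in the source norm when you try to apply part a) to $\chi\psi$ --- is a local energy norm of $\partial\psi$ on a fixed annulus over the \emph{whole} time interval. No estimate available at this stage of the paper provides that: part a) only applies to solutions compactly supported in $r_*$ (applying it to a truncation of $\psi$ reproduces the same commutator one radius further out, and since $\mathrm{supp}\,\psi$ grows like $|r_*|\lesssim C+s$, ``finite speed of propagation'' only closes the regress if the truncation radius is taken $\gtrsim T$, destroying uniformity in $T$); and the already-proved degenerate bounds \eqref{degen_RW''} cannot be used either, because their source weight $w_1\approx |r_*|^{-1}$ near the trapping is not controlled by $w_{ln}\approx 1+|\ln|r_*||$ on the support of $f$. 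Indeed, an unconditional local energy bound for the exact forward solution with a source at the trapped set measured only in $w_{ln}$-weighted $L^2$ is precisely the content of \eqref{RW_LE}, whose proof (Step 4) invokes Proposition \ref{Daniels_prop}; so your construction assumes what the proposition is needed for. This is why part b) in \cite{MMTT_sch} is established by a direct construction of $\phi$ via the local spectral/microlocal analysis at the trapped set, not by cutting off the exact solution. To repair your route you would need an independent parametrix-type construction near $r_*=0$ (or an independent proof of the global log-weighted estimate), at which point the truncation step becomes unnecessary.
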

For a proof of these bounds refer to parts a) and b) of Proposition 3.3 in \cite{MMTT_sch}. Note that 
in both cases the analysis is purely local and relies only on the structural assumption
that the potential $V(r_*)$ in the definition of $\Box^0_{RW}$ has a unique positive
non-degenerate local maximum at $r_*=0$. 

To show that parts a) and b) of Proposition \ref{Daniels_prop} imply \eqref{degen_RW} with local weight
$w_{ln}$ involves a series of steps. First, one employs 
part b) to produce a $\td{\phi}$ such that
$\Box_{RW}^0\td{\phi}-\chi_0 \Box_{RW}^0\phi$ is supported away from $r_*=0$ with good $L^2$ bounds, 
and such that \eqref{degen_RW''} holds for $\td{\phi}$ with $w_1$ replaced by $w_{ln}$. Then one may apply
\eqref{degen_RW''} to the remainder $\phi-\td{\phi}$ to show that 
\eqref{degen_RW''} holds for $\phi$ with RHS norm containing $w_{ln}$ instead of 
$w_1$. Finally, one replaces the weight $(w_1)^{-1}$ on the LHS of this last bound 
with $(w_{ln})^{-1}$ by applying
part a) above to $\chi_0\phi$.
 
This concludes the proof of \eqref{RW_LE}.
\end{proof}

\begin{proof}[Proof of estimate \eqref{sph_RW_LE}]
Note that the spherical part of $\Box_{RW}^0$ is simply the 1D wave equation
$\Box=-\partial_s^2+\partial_{r_*}^2$, for which it is easy to produce estimates
of the form  \eqref{sph_RW_LE} via the multipliers $\chi_{(r_*^0,\pm 2^j)}\partial_{r_*}$, 
where $r_*^0\leqslant -2^j$. This is analogous to \textbf{step 2} and \textbf{step 3} of the
previous proof. Further details are left to the reader. 
\end{proof}

\section{Local Energy Decay for Spin-Zero Fields Part II: Proof of the Main 
Estimate}\label{LE_section2}

Our main goal here is to prove Theorem \ref{main_LE_thm1} which we state
again here for the convenience of the reader.

\begin{theorem}[Inverse angular gradient local energy decay estimates]\label{Main_LE}
Let $\phi(t,r,x^A)$ and $H(t,r,x^A)$ be scalars,  and let $G_{a}(t,r,x^A)$ be a one form 
in $(t,r)$ variables which also depends on the $\mathbb{S}^2$ coordinates. Suppose
all of these quantities have no radial component::
\begin{equation}
		\int_{\mathbb{S}^2}\phi(t,r) dV_{\mathbb{S}^2} \ = \ 
		\int_{\mathbb{S}^2}H(t,r) dV_{\mathbb{S}^2}  \ = \ 
		\int_{\mathbb{S}^2}G(t,r) dV_{\mathbb{S}^2}  \ \equiv \ 0 \ , \notag
\end{equation}
throughout the region $[0,T]\times[r_0,\infty)$.
Finally, set ${\star}_h d G=K$. If $\phi, G, H$ are all supported in $\{r \leqslant CT\}$  and
\begin{equation}
		\Box^0\phi \ = \ \nabla^a G_a + H \ , \label{spin_zero_eq'}
\end{equation}
then one has the estimate:
\begin{equation}
\begin{split}
		\lp{(w_{ln})^{-1} \big(\ud(-\sDelta)^{-\frac{1}{2}}\phi,r^{-1}\phi\big)}{LE_0[0,T]}
		 \lesssim  & \
		\lp{(-\sDelta)^{-\frac{1}{2}}\big(\ud \phi(0)- G(0)\big)}{L^2(drdV_{\mathbb{S}^2})}
		+\lp{r^{-1}\phi(0)}{L^2(drdV_{\mathbb{S}^2})} \\
	&	+ \lp{w_{ln} \big(
		r^{-1}G,(-\sDelta)^{-\frac{1}{2}}K, (-\sDelta)^{-\frac{1}{2}}H\big)}{LE^*_0[0,T]}
		\ . 
\end{split}\label{main_est}
\end{equation}
\end{theorem}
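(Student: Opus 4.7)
The plan is to combine an angular rescaling via $(-\sDelta)^{-1/2}$ with a 2D ``Coulomb-like'' gauge transformation on the $(t,r)$-plane, reducing the divergence-structured source $\nabla^a G_a$ to a form that can be fed into Theorem~\ref{Main_LE_prop}. First, the mean-zero hypotheses make $\lambda=(-\sDelta)^{1/2}$ invertible on all the quantities in play; setting $\Psi=\lambda^{-1}\phi$, $\hat G=\lambda^{-1} G$, $\hat H=\lambda^{-1} H$, $\hat K=\lambda^{-1} K$, the equation transforms to $\Box^0\Psi=\nabla^a\hat G_a+\hat H$ with $\star_h\ud\hat G=\hat K$, and the target estimate reduces to controlling $\lp{(w_{ln})^{-1}(\ud\Psi,r^{-1}\lambda\Psi)}{LE_0}$ in terms of the initial quantity $\ud\Psi(0)-\hat G(0)$ together with $\lp{w_{ln}(r^{-1}\lambda\hat G,\hat K,\hat H)}{LE^*_0}$.

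Next I would perform the gauge transformation. Pointwise in $x^A$, apply the 2D Lorentzian Hodge decomposition on the $(t,r)$-plane to write $\hat G=\ud f+\star_h\ud g$ for scalar functions $f,g$ of $(t,r,x^A)$, with the natural initial gauge $\ud f(0)=\hat G(0)$, $\ud g(0)=0$ and $f(0,r_0)=g(0,r_0)=0$. A direct calculation using $-\ud^\star\ud=\Box_h$ on scalars yields $\Box_h f=\nabla^a\hat G_a$ and $\Box_h g=\pm\hat K$. Setting $\eta=\Psi-f$ gives
\begin{equation}
\Box^0\eta=\Box^0\Psi-\Box^0 f=\hat H-r^{-2}\sDelta f,\notag
\end{equation}
so the divergence source has been ``gauged away'' into $f$, leaving $\eta$ to satisfy a spin-zero wave equation whose source carries no $(t,r)$-gradient of $\hat G$. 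Crucially, $\ud\eta(0)=\ud\Psi(0)-\hat G(0)$ matches the data norm on the right-hand side exactly.

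The remainder of the proof splits into two independent subestimates. For $\eta$, Theorem~\ref{Main_LE_prop} together with Remark~\ref{duhamel_remark} delivers the desired $LE_0$ bounds on $\ud\eta$ and $r^{-1}\sd\eta$ in terms of $E^{1/2}(\eta[0])$ and $\lp{w_{ln}(\hat H-r^{-2}\sDelta f)}{LE^*_0}$; the $\hat H$ piece is precisely $(-\sDelta)^{-1/2}H$. For $f$, the Hodge identity $\ud f=\hat G-\star_h\ud g$ controls $\ud f$ in terms of $\hat G$ and $\ud g$, and 2D wave LE estimates applied to $g$ with source $\hat K$ take care of $\ud g$. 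The angular-derivative quantities $r^{-2}\sDelta f$ and $r^{-1}\lambda f$ are then controlled by exploiting that $\lambda f$ satisfies $\Box_h(\lambda f)=\nabla^a G_a$ (using $\lambda\hat G=G$), so that the same 2D wave LE estimates, now with source built directly from $G$, together with 1D Hardy inequalities in $r$, convert $LE^*_0$ bounds on $r^{-1}G$ into the needed $LE^*_0$ bound on $r^{-2}\sDelta f$ and $LE_0$ bound on $r^{-1}\lambda f$. The main obstacle is carrying out these 2D wave and Hardy estimates cleanly with the mixed spacetime weights $w_{ln}$, $r^{-1}$, $r^{-2}$, keeping track of the balance between the single factor of $\lambda$ on the right-hand side and the two factors of $\lambda$ hidden inside $\sDelta$ on the left; this is where the spherical harmonic decomposition and the support condition $r\leq CT$ enter most essentially, and also where the matching of the initial datum $r^{-1}\lambda f(0)$ against $\hat G(0)$ via Hardy must be done with care.
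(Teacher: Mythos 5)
Your overall architecture is the same as the paper's: construct a potential (your $f$, the paper's $\aphi$) absorbing the divergence part of the source, then apply Theorem \ref{Main_LE_prop} to $(-\sDelta)^{-\frac12}(\phi - \text{corrector})$, with the initial-data term $\ud\phi(0)-G(0)$ appearing exactly as you say. The gap is in how you build the corrector. You obtain $f,g$ from a \emph{Lorentzian} Hodge decomposition, i.e.\ by solving $1+1$ dimensional wave equations $\Box_h f=\nabla^a\hat G_a$, $\Box_h g=\pm\hat K$, and this cannot deliver the bounds your scheme needs, for two structural reasons. First, $\Box_h$ contains no angular derivatives, so it provides no angular smoothing: the source term $r^{-2}\sDelta f$ in your equation for $\eta$ carries two angular derivatives of $f$, while after commuting $\lambda=(-\sDelta)^{1/2}$ through the equation the best available source is $\Box_h(\lambda f)=\nabla^a G_a$; the remaining factor of $\lambda$ acting on $\lambda f$ corresponds to a source $\lambda\nabla^a G_a$, one full angular derivative beyond anything controlled by the hypotheses ($r^{-1}G$ and $(-\sDelta)^{-1/2}K$ only). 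Spherical harmonic decomposition does not help: mode by mode the loss is the unbounded factor $\sqrt{\ell(\ell+1)}$, and there is no mechanism in a pure $(t,r)$ wave equation to absorb it. Second, the term $r^{-2}\sDelta f$ (and likewise $\ud g$) must be placed in the \emph{dual} norm $LE^*_0$ (or, via Remark \ref{duhamel_remark}, in $L^1L^2$) to be fed into \eqref{box0_LE}, but hyperbolic evolution does not preserve dual-type norms: solving a wave equation with an $LE^*$ source yields at best $LE$ bounds on the solution, and for the pure $1+1$ dimensional operator $\Box_h$ (no angular potential) one does not even have local energy decay, so the weighted spacetime bounds you invoke for $g$ and for $r^{-1}\lambda f$ are unavailable.

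The paper circumvents both problems by making the corrector \emph{elliptic} rather than hyperbolic: after a partition of unity in $r$ and in $t$ on matched dyadic scales and a rescaling to the unit scale (this is where the support condition $r\leqslant CT$ and the trace estimate \eqref{trace_est} enter, to control the energy of the corrector at $t=0$), one sets $\aphi_0=(-\Delta_{r,t}-\sDelta)^{-1}\ud^\star G$ with the \emph{Euclidean} metric in $(t,r)$ and truncates. Then $\sDelta(-\Delta_{r,t}-\sDelta)^{-1}$ and $(-\sDelta)^{-\frac12}\nabla_{r,t}\,(-\Delta_{r,t}-\sDelta)^{-1}\ud^\star\ud$ are zero-order singular integrals, bounded on $L^2$ with $A_p$ weights such as $w_{ln}$; this simultaneously recovers the two angular derivatives on $\aphi$ and, because the construction is spacetime-local after the partition, keeps all bounds in the summed $LE^*_0$ norm (Proposition \ref{p:ell}). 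The residual $G-\ud\aphi$ is then controlled with a gained factor $(-\sDelta)^{-\frac12}\nabla_{r,t}$, so that $\Box^0\psi$ for $\psi=(-\sDelta)^{-\frac12}(\phi-\aphi)$ lands in the admissible right-hand side of \eqref{box0_LE}. To repair your argument you would have to replace the Lorentzian Hodge solve by such a spacetime-elliptic (or otherwise local, derivative-gaining) construction; as written, the step producing $w_{ln}r^{-2}\sDelta f\in LE^*_0$ fails and the proof does not close.
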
\ret

The main step in the proof is the following elliptic result:
\begin{proposition}\label{p:ell}
Let $G$ be as in the theorem. Then there exists a function $ \aphi$ 
in  $[0,T]\times[r_0,\infty)$, with no radial component,  satisfying the following estimates:
\begin{equation}\label{dsolve}
\begin{split}
	\lp{ r^{-1} \big(\ud\aphi,r^{-1}\aphi,r^{-1} \sd \aphi\big)}{LS^*_0[0,T]}  	 
+ \lp{ (-\sDelta)^{-\frac{1}{2}} \nabla_{r,t}(G-\ud \aphi) }{LS^*_0[0,T]}
 \lesssim  \lp{\big(r^{-1}G,(-\sDelta)^{-\frac{1}{2}}K \big)}{LS^*_0[0,T]}  ,
\end{split}
\end{equation}
\begin{equation}\label{dsolve1}
\begin{split}
	\lp{ w_{ln} r^{-2} \sd \aphi}{LS^*_0[0,T]}  	 
+ \lp{ w_{ln} (-\sDelta)^{-\frac{1}{2}} \nabla_{r,t}(G-\ud \aphi) }{LS^*_0[0,T]}
 \lesssim  \lp{w_{ln} \big(r^{-1}G,(-\sDelta)^{-\frac{1}{2}}K \big)}{LS^*_0[0,T]}  .
\end{split}
\end{equation}
\end{proposition}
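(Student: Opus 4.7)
The plan is to construct $\aphi$ at each fixed time $t\in[0,T]$ as the solution of a spatial elliptic problem, then use the constraint $\star_h \ud G = K$ to transfer bounds on $G - \ud\aphi$ over to bounds on $\aphi$ and $K$. Concretely, at each fixed $t$, I would define $\aphi(t,\cdot,\cdot)$ on $[r_0,\infty)\times\mathbb{S}^2$ as the unique zero-spherical-average solution of
\[
\partial_r^2 \aphi + r^{-2}\sDelta \aphi \;=\; \partial_r G_r,
\]
with decay as $r\to\infty$ and a natural condition at $r=r_0$. Working mode-by-mode in spherical harmonics, for each $l\geqslant 1$ this is the ODE $(\partial_r^2 - l(l+1)r^{-2})\aphi_l = \partial_r(G_r)_l$, which is coercive on each mode and solvable with uniform Green's-function bounds; testing against $\aphi_l$ and summing over $l$ gives $\lp{\partial_r \aphi, r^{-1}\sd\aphi}{L^2} \lesssim \lp{G_r}{L^2}$.

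The bounds on $r^{-1}\ud\aphi$, $r^{-2}\aphi$, and $r^{-2}\sd\aphi$ in \eqref{dsolve} then follow by localizing the above estimate on dyadic shells in $r$, handling the boundary terms via Poincaré \eqref{S2_poincare} and the zero-average condition. For the $\partial_t\aphi$ contribution to $\ud\aphi$, one commutes $\partial_t$ through the elliptic equation (it commutes exactly, since the equation is purely spatial) and repeats the same argument.

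For the remainder $\td G := G - \ud\aphi$, the radial derivative is immediate from the defining equation: $\partial_r \td G_r = -r^{-2}\sDelta \aphi$, so $(-\sDelta)^{-1/2}\partial_r\td G_r$ is bounded pointwise by $r^{-2}\sd\aphi$ and hence by $r^{-1}G$. For the three remaining components I would use $\ud\ud\aphi = 0$, so that $\star_h \ud \td G = K$, which in coordinates reads (up to bounded metric factors) $\partial_t \td G_r - \partial_r \td G_t = K$. Commuting $\partial_t$ through the elliptic equation gives $\partial_r(\partial_t \td G_r) = r^{-2}\sDelta \partial_t \aphi$, so
\[
\partial_r\bigl[(-\sDelta)^{-1/2}\partial_t\td G_r\bigr] \;=\; -\,r^{-2}(-\sDelta)^{1/2}\partial_t\aphi;
\]
integrating inward from $r=\infty$, where $\td G$ vanishes by the support assumption on $G$, and applying a Hardy-type inequality from the appendix produces the desired $LS^*_0$ bound in terms of $r^{-1}\partial_t\aphi$, which is in turn bounded by $r^{-1}G$ from the previous paragraph. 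The component $\partial_r\td G_t$ is then controlled by $\partial_t\td G_r$ and $K$ via the constraint, and $\partial_t\td G_t$ is handled analogously by commuting $\partial_t$ once more and re-using the elliptic equation together with an integration in $r$.

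The weighted estimate \eqref{dsolve1} follows from the same scheme once $w_{ln}$ is inserted into both the elliptic estimates and the $r$-integration step; $w_{ln}$ is essentially constant on each dyadic shell except near $r=r_{\mathcal{T}}$, and the weighted versions of the Hardy-type inequalities in the appendix apply with bounded commutator losses. The main obstacle is producing the $(-\sDelta)^{-1/2}$ gain in the time-derivative components of $\td G$: there is no pointwise control of $\partial_t\aphi$ from the elliptic equation alone, so the Hardy-type integration in $r$ must be carried out at the level of $(-\sDelta)^{-1/2}\partial_t\td G_r$ rather than of $\partial_t\td G_r$ itself, and the constraint must be invoked exactly once, to convert $\partial_r\td G_t$ into $\partial_t\td G_r + K$, thereby accounting precisely for the $K$-term on the right-hand side of \eqref{dsolve}.
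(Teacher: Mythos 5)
Your construction solves, at each fixed time $t$, a purely spatial elliptic equation sourced only by the radial component $G_r$, and you propose to control the $\partial_t\aphi$ part of $\ud\aphi$ by ``commuting $\partial_t$ and repeating the same argument.'' That is the gap: repeating the argument bounds $\partial_r\partial_t\aphi$ and $r^{-1}\sd\,\partial_t\aphi$ by $\lp{\partial_t G_r}{L^2}$, but $\partial_t G_r$ is not majorized by the right-hand side of \eqref{dsolve}, which contains only the underivated $r^{-1}G$ and $(-\sDelta)^{-\frac12}K$. The constraint $\star_h\ud G=K$ only ever gives you the antisymmetric combination $\partial_t G_r-\partial_r G_t$, and only with the angular gain $(-\sDelta)^{-\frac12}$; it never produces $\partial_t G_r$ (or $\partial_r G_t$) separately, nor $K$ without the smoothing. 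Since the first term of \eqref{dsolve} already requires $\lp{r^{-1}\partial_t\aphi}{LE^*_0}$, and your treatment of $\partial_t\td{G}_t$ (``commuting $\partial_t$ once more'') would even bring in $\partial_t^2 G_r$, the fixed-time scheme as written cannot close. A secondary issue: the $\ell^1$-dyadic structure of $LE^*_0$ on both sides does not follow by ``localizing'' a global $L^2$ energy estimate on shells; one needs either off-diagonal decay of the solution operator or a genuinely localized construction.

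The paper avoids all of this by treating $t$ elliptically rather than as a parameter. After a radial partition of unity, a time partition on the matching dyadic scale, and rescaling to unit scale, it takes $\aphi=\chi\,(-\Delta_{r,t}-\sDelta)^{-1}\ud^\star G$ -- a space-time Coulomb-type gauge using \emph{both} components of $G$ -- so that $G-\ud\aphi$ becomes $-\sDelta(-\Delta_{r,t}-\sDelta)^{-1}G+(-\Delta_{r,t}-\sDelta)^{-1}\ud^\star\ud G$ up to cutoff errors. Every bound in \eqref{dsolve} and \eqref{dsolve1}, including the $w_{ln}$-weighted ones, then reduces to the $L^2$ and $A_p$-weighted boundedness of the Calder\'on--Zygmund operators $\sDelta(-\Delta_{r,t}-\sDelta)^{-1}$ and $(-\sDelta)^{\frac12}\nabla_{r,t}(-\Delta_{r,t}-\sDelta)^{-1}$, which is exactly what makes the time-derivative components of $\aphi$ and of $G-\ud\aphi$ no worse than the radial ones. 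To salvage your fixed-time approach you would have to insert the constraint into the equation for $\partial_t\aphi$ itself and prove uniform weighted bounds for degenerate operators such as $r^{-1}(\partial_r^2+r^{-2}\sDelta)^{-1}\partial_r(-\sDelta)^{\frac12}$ over all dyadic scales, and you would still be left without an admissible bound for $\partial_t\td{G}_t$; at that point you are effectively reconstructing the paper's rescaled CZ argument in a harder form.
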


We first use the proposition to prove Theorem~\ref{Main_LE}. This is
achieved by applying the bound \eqref{box0_LE} to the function $\psi =
\Delta^{-\frac12}(\phi-\aphi)$.  The left hand side of
\eqref{main_est} is controlled by the left hand side of
\eqref{box0_LE} applied to $\psi$ and the first term on the left hand
side of \eqref{dsolve}.  It remains to estimate the right hand side of
\eqref{box0_LE} applied to $\psi$.

For the energy part $E^\frac12(\psi[0])$ we use the trace estimate \eqref{trace_est}
of Lemma \ref{HPT_Lemma} below. Note  it is precisely here that  truncation
to $r\leqslant CT$ is used. Estimating 
$\aphi$ and $\ud \aphi - G$ in this way we obtain from \eqref{dsolve}
the following bound:
\[
	\lp{(-\sDelta)^{-\frac{1}{2}}\big( \ud\aphi(0)- G(0)\big)}{L^2(drdV_{\mathbb{S}^2})}
+\lp{r^{-1}\aphi(0)}{L^2(drdV_{\mathbb{S}^2})} 
 \lesssim  \lp{\big(r^{-1}G,(-\sDelta)^{-\frac{1}{2}}K \big)}{LE^*_0[0,T]} \ ,
\]
which is combined with  the energy component
of the right hand side of \eqref{main_est}. 
It remains to estimate $\|\Box^0 \psi\|_{LE_0^*}$, for which we
compute:
\[
\begin{split}
\Box^0 \psi \ =& \  (-\sDelta)^{-\frac12} ( \nabla^a G_a + H) -  (-\sDelta)^{-\frac12} (\nabla^a \nabla_a + r^{-2}
\sDelta) 
\aphi \ ,
\\
\ =& \  (-\sDelta)^{-\frac12}  \nabla^a (G_a - \nabla_a \aphi)  + 
r^{-2} (-\sDelta)^{\frac12} \aphi +  (-\sDelta)^{-\frac12} H \ .
\end{split}
\]
and use the bound \eqref{dsolve1} for the first two terms.

 \begin{proof}[Proof of Proposition~\ref{p:ell}]
   We observe that the proposition has no reference to the metric  $h$. We
   begin with several simplifications. 

\sstep{1} Using a radial partition of unit, we reduce the
   problem to the case when we know that $G$ is supported in a dyadic
   annulus $ \{r \approx R\}$, $R \gtrsim 1$,  and we seek $\apsi$ with support in a
   slightly enlarged annulus.

\sstep{2}  Secondly, given $R \gtrsim 1$, we use a partition of
   unit in time on the $R$ scale to reduce the problem to the case
   when $G$ is supported in a dyadic annulus $ \{r \approx R\}$ and a
   time interval of size $\delta T \approx R$ and we seek $\apsi$ with
   support in a similar but slightly enlarged region.


\sstep{3} We rescale the problem to the unit scale in
   $t$ and $r$. Then matters are   reduced to the next
   lemma.
   
\end{proof}

\begin{lemma}
  Let $G$ be a one form in $(t,r)$ variables in $[0,\infty) \times
  [0,\infty) \times \S^2$  which is  supported in $[0,1] \times [0,1] \times
  \S^2$. Then there exists a function $\apsi$ in $[0,\infty) \times
  [0,\infty) \times \S^2$, supported in $[0,2] \times [0,2] \times
  \S^2$, so that the following estimates hold:
\begin{align}
	\| \aphi\|_{H^1} + \|  (-\sDelta)^{-\frac12} \nabla_{r,t} (G - \ud \aphi)\|_{L^2}
	\ &\lesssim \ \|G\|_{L^2} + \|  (-\sDelta)^{-\frac12} \ud G\|_{L^2} \ , \label{aphi-noln}\\
	\| w_{ln}  (-\sDelta)^{\frac12}\aphi\|_{L^2} + \| w_{ln} (-\sDelta)^{-\frac12} \nabla_{r,t} (G - \ud \aphi)\|_{L^2}
	\ &\lesssim \ \|w_{ln}G\|_{L^2} + \| w_{ln} (-\sDelta)^{-\frac12} \ud G\|_{L^2} \ . \label{aphi-ln}
\end{align}
where the logarithmic weight $w_{ln}$ is now centered at $r = \frac12$.
\end{lemma}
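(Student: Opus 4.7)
Plan: The function $\aphi$ will be constructed via a four-dimensional Hodge decomposition of $G$ on the product space $\mathbb{R}^2_{(t,r)} \times \mathbb{S}^2$. Let $\Delta^{4D} = \partial_t^2 + \partial_r^2 + \sDelta$ denote the flat Laplacian on the product. On the enlarged box $\Omega = [-1,3]\times[-1,3] \times \mathbb{S}^2$, I will solve the Poisson problem $\Delta^{4D} \psi = \partial^a G_a$ with zero Dirichlet data on $\partial\Omega$. Since $G$ has vanishing spherical mean, so does $\psi$. The desired function is then $\aphi = \chi \psi$, where $\chi$ is a smooth cutoff supported in $[0,2]^2$ with $\chi \equiv 1$ on $[0, 3/2]^2$; this ensures $\aphi$ has the required support.

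For \eqref{aphi-noln}, the $H^1$ bound on $\aphi$ follows from the standard energy identity
\[
\int |\nabla_{4D}\psi|^2 \ = \ \int \ud\psi\cdot G \ \leq \ \|\nabla_{(t,r)}\psi\|_{L^2}\|G\|_{L^2} \ ,
\]
combined with Poincar\'e (using the Dirichlet condition), yielding $\|\psi\|_{H^1} \lesssim \|G\|_{L^2}$ and hence the same for $\aphi$. For the second term on the LHS of \eqref{aphi-noln}, set $\nu = G - \ud \psi$; by construction $\ud^*\nu = 0$ and $\ud \nu = \ud G$. The Gaffney/Bochner estimate for such Hodge-coclosed 1-forms gives the interior bound $\|\nabla_{(t,r)}\nu\|_{L^2([0,3/2]^2)} \lesssim \|\ud G\|_{L^2} + \|G\|_{L^2}$. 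Since $(-\sDelta)^{-1/2}$ acts only on the $\mathbb{S}^2$ factor and commutes with $\nabla_{(t,r)}$,
\[
\|(-\sDelta)^{-1/2}\nabla_{(t,r)}\nu\|_{L^2} \ \lesssim \ \|(-\sDelta)^{-1/2}\ud G\|_{L^2} + \|G\|_{L^2} \ .
\]
Decomposing $\ud G$ into its $(t,r)\wedge(t,r)$ component (equal to $K\, dt\wedge dr$) and its mixed $(t,r)\wedge\mathbb{S}^2$ component ($-\snabla G_a\, dx^a\wedge dx^A$), and using that $(-\sDelta)^{-1/2}\snabla$ maps zero-mean $L^2$ into $L^2(T^*\mathbb{S}^2)$ isometrically, yields $\|(-\sDelta)^{-1/2}\ud G\|_{L^2} \sim \|G\|_{L^2} + \|(-\sDelta)^{-1/2}K\|_{L^2}$. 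The contribution from the cutoff annulus $[0,2]^2\setminus[0,3/2]^2$, where $G$ vanishes and $\psi$ is therefore harmonic, is controlled via interior elliptic regularity: $\|\psi\|_{H^2}$ on the annulus is bounded by $\|\psi\|_{L^2}$ on a slightly larger region, hence by $\|G\|_{L^2}$.

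For the weighted estimate \eqref{aphi-ln}, I would repeat the argument in weighted $L^2(w_{ln}^2)$ spaces. The weight $w_{ln}^2$ is a Muckenhoupt $A_2$ weight on $\mathbb{R}^2$ (its only singularity, a logarithmic one at $r=1/2$, is of admissible order), so the underlying Calder\'on--Zygmund theory carries through, yielding weighted versions of both the Gaffney estimate and the energy identity. The boundedness of $(-\sDelta)^{-1/2}\snabla$ on the sphere is unaffected by $(t,r)$-weights, so the endpoint manipulation with $\ud G$ is unchanged.

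The main obstacle I anticipate is executing the weighted estimates cleanly, specifically the weighted Gaffney inequality on the mixed product domain together with the commutator between $w_{ln}$ and $\nabla_{(t,r)}$ in the weighted energy identity. If the direct $A_2$-weighted approach proves too delicate, a robust alternative is to expand everything in spherical harmonics: for each $\ell\geq 1$, the fiberwise operator $-\Delta_{(t,r)} + \ell(\ell+1)$ is elliptic with a uniform spectral gap, and its resolvent admits explicit $L^2(w_{ln}^2)$ bounds obtained by testing against appropriate weighted multipliers, after which the full estimate follows by summing in $\ell$ with the correct $\ell^{-1}$ weights coming from $(-\sDelta)^{-1/2}$.
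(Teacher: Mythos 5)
Your overall architecture (an auxiliary elliptic solve for a potential, a cutoff, an elliptic estimate for the residual, and $A_2$ theory for the weight) parallels the paper's proof, which instead extends $G$ by even/odd reflection to the whole $(t,r)$ plane, takes the explicit global solution $\aphi_0=(-\Delta_{r,t}-\sDelta)^{-1}\ud^\star G$, writes the residual exactly as $\td{G}=-\sDelta(-\Delta_{r,t}-\sDelta)^{-1}G+(-\Delta_{r,t}-\sDelta)^{-1}\ud^\star\ud G$, and reduces everything to $L^2$ and $A_2$-weighted boundedness of the singular integrals $\sDelta(-\Delta_{r,t}-\sDelta)^{-1}$ and $(-\sDelta)^{\frac12}\nabla_{r,t}(-\Delta_{r,t}-\sDelta)^{-1}$. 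Your Dirichlet-box plus Gaffney route could be made to work, but as written the key step is flawed. The identity $\ud^\star\nu=0$ for $\nu=G-\ud\psi$ is false when $\ud$ is the $(t,r)$ exterior derivative (which is what the lemma uses): since $\Delta_{r,t}\psi=\partial^aG_a-\sDelta\psi$, one gets $\ud^\star(G-\ud\psi)=-\sDelta\psi\neq 0$. Co-closedness holds only for the full four-dimensional form $G-d\psi$, but then its curl is the full $dG$, whose mixed components $\snabla_AG_a$ are \emph{not} controlled by the right-hand side of the lemma (only $G$ and $(-\sDelta)^{-\frac12}\ud G\sim(-\sDelta)^{-\frac12}K$ are). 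So your interior Gaffney bound $\lp{\nabla_{r,t}\nu}{L^2}\lesssim\lp{\ud G}{L^2}+\lp{G}{L^2}$ has an inadmissible right-hand side, and you cannot then ``insert'' $(-\sDelta)^{-\frac12}$ by commuting: inequalities do not survive composition with $(-\sDelta)^{-\frac12}$ on both sides. The repair is to run the div--curl/elliptic estimate directly for $(-\sDelta)^{-\frac12}\nu$ (equivalently, keep the nonzero divergence $-\sDelta\psi$, which after the smoothing becomes $-(-\sDelta)^{\frac12}\psi$ and is controlled by your energy bound), tracking the bounded commutators of $(-\sDelta)^{-\frac12}$ with the spherical part of the Hodge system.

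The weighted step is the other genuine gap. A ``weighted energy identity'' and a ``weighted Gaffney estimate'' obtained by rerunning the integrations by parts in $L^2(w_{ln}^2)$ do not close: $\partial_r w_{ln}$ blows up like $|r-\tfrac12|^{-1}$ (up to a logarithm), and the resulting commutator terms are not controlled --- this is exactly why \eqref{aphi-ln} asks only for $w_{ln}(-\sDelta)^{\frac12}\aphi$ and not for weighted $(t,r)$ derivatives of $\aphi$. The same singular commutator defeats the fallback of ``testing against weighted multipliers'' fiberwise in spherical harmonics. What does work, and is what the paper does, is to apply $A_2$-weighted Calder\'on--Zygmund bounds directly to the solution operators: uniformly in $\ell$, to $\ell(\ell+1)(-\Delta_{r,t}+\ell(\ell+1))^{-1}$ and $(\ell(\ell+1))^{\frac12}\nabla_{r,t}(-\Delta_{r,t}+\ell(\ell+1))^{-1}$ (e.g.\ by rescaling, since the $A_2$ constant of the rescaled weight is uniform). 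In your setting you would additionally have to verify standard-kernel bounds for the Dirichlet Green's function of the box (true in the interior, but an extra step), and note that your cutoff annulus crosses $r=\tfrac12$, where $w_{ln}$ is unbounded, so the ``interior elliptic regularity'' disposal of the cutoff terms needs a small additional argument (local integrability of the logarithmic weight plus Sobolev). Switching to the paper's reflection-plus-free-resolvent construction removes both of these boundary issues at no cost.
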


We remark that the lemma includes both the case when $G$ is compactly
supported, and the cases when we have either a $t = const$ boundary or
an $r = const$ boundary or both.  

\begin{proof}
In all cases, we reduce the problem
to the boundaryless case by taking suitable even/odd extensions of the
components of $G$ to the other three quadrants in the $(r,t)$ plane.
 Now we have the $(t,r)$ 1-form $G$ with variables $\R \times \R  \times
\S^2$ supported in $[-1,1] \times [-1,1] \times \S^2$. 

Using the euclidean metric in $(t,r)$ we define $\aphi$ by:
\[
\aphi = \chi(r,t) \aphi_0 \ , \qquad \aphi_0\  = \ (- \Delta_{r,t} - \sDelta)^{-1} \ud^\star G  \ ,
\]
where $\chi$ is a smooth cutoff supported in $[-2,2] \times [-2,2]$ 
and which equals $1$ in $[-1,1] \times [-1,1]$.

Then:
\[
G- \ud\aphi \ = \ \chi G_0 -  \aphi d\chi \ , 
\]
where:
\[
\begin{split}
 \td{G} \ =& \  G- \ud\aphi_0  \ = \ 
 (1 + \Delta_{r,t}(- \Delta_{r,t} - \sDelta)^{-1}) G + (- \Delta_{r,t} - \sDelta)^{-1} \ud^\star \ud G \ ,
\\
\ = & \  - \sDelta(- \Delta_{r,t} - \sDelta)^{-1} G + (- \Delta_{r,t} - \sDelta)^{-1} \ud^\star \ud G \ .
\end{split}
\]

Then we have the straightforward bounds:
\begin{align}
	\|   \aphi_0\|_{H^1} \ &\lesssim\  \|G\|_{L^2} \ , \notag\\
	\|(-\sDelta)^{-\frac12} \td{G}\|_{H^1} \ &\lesssim  \
	\|G\|_{L^2}+\|(-\sDelta)^{-\frac12} \ud G\|_{L^2} \ . \notag
\end{align}
and the estimate \eqref{aphi-noln} follows after truncation.

For the bound \eqref{aphi-ln} we need the following weighted versions
of the above estimates:
\begin{align}
	\|  w_{ln} (-\sDelta)^{\frac12} \aphi_0\|_{L^2} \ &\lesssim \ \|w_{ln} G\|_{L^2} \ , \notag\\
	\|w_{ln} (-\sDelta)^{-\frac12} \nabla_{r,t} \td{G}\|_{L^2} 
	\ &\lesssim\  \|w_{ln}G\|_{L^2}+\|w_{ln}(-\sDelta)^{-\frac12} \ud G\|_{L^2} \ . \notag
\end{align}
These in turn correspond to the $L^2_{w_{ln}}$ boundedness of the
singular integral operators $\sDelta(- \Delta_{r,t} - \sDelta)^{-1}$, respectively
$(-\sDelta)^{\frac12}\nabla_{r,t} (- \Delta_{r,t} - \sDelta)^{-1}$.
But this can be easily handled within the
context of $A_p$ weights, see for example Chapter V of \cite{Stein}.
\end{proof}

\ret

\section{Appendix: Some Hardy and Poincare Type Estimates}

This section contains some  auxiliary  estimates needed  for  some 
of the proofs in the main body of the paper. They are collected here
for the convenience of the reader.

To state the estimates denote by  $\chi_j$ the indicator function (sharp cutoff)
of $2^j\leqslant |x|\leqslant 2^{j+1}$.
 
\begin{lemma}[Hardy and Poincare  estimates]\label{HPT_Lemma}
Let $\psi$ be a function on $\mathbb{R}\times\mathbb{S}^2$.  Then one has the pair of uniform bounds
for $1\leqslant p\leqslant \infty$ and $0\leqslant i\leqslant j$, and implicit constant
depending on $s>\frac{1}{2}$:
\begin{align}
		\lp{\langle x\rangle^{-s} \psi}{\ell^p L^2 (|x|\leqslant 2^j)} \ &\lesssim \ 
		\lp{\chi_0 \psi }{L^2} +
		\lp{\langle x\rangle^{1-s}\partial_{x} \psi}{\ell^p L^2(|x|\leqslant 2^j) } \ ,
		 \label{index_hardy}\\
		 \lp{\chi_j \langle x\rangle^{-s} \psi}{ L^2  } \ &\lesssim \ 2^{(\frac{1}{2}-s)(j-i)}\big(
		\lp{\chi_0 \psi }{L^2}
		+ \lp{\langle x\rangle^{1-s}\partial_{x} \psi}{\ell^\infty
		L^2(|x|\leqslant 2^i) }\big) \label{improved_hardy}\\ 
		&\ \ \ \ + 2^{(s-\frac{1}{2})(j-i)}\lp{\langle x\rangle^{1-s}\partial_{x} \psi}{\ell^\infty 
		L^2(2^i\leqslant |x|\leqslant 2^{j+1}) }
		 \  .  \notag
\end{align}
Here $\lp{\psi}{\ell^p L^2}^p= \lp{\psi}{L^2(|x|<1)}^p+ \sum_{j\geqslant 0} \lp{\chi_j\psi}{L^2}^p$
(with  standard modification for $p=\infty$),
where the integration is with respect to $dxdV_{\mathbb{S}^2}$.
Similar estimates hold when $\psi$ is also a function
of $t$ and the integrals are with respect to $dxdV_{\mathbb{S}^2}dt$.

In addition, let $G$ be a space-time function (variables $(t,r,x^A)$) defined on the slab
$r\geqslant r_0$ and $0\leqslant t\leqslant T$. Then one has the uniform bound:
\begin{equation}
		\sup_{t\in [0,T]}\lp{G(t)}{L^2(drdV_{\mathbb{S}^2})[r_0,\infty)} \ \lesssim \ 
		\lp{\big( T^{-\frac{1}{2}}G, r^{-\frac{1}{2}}G,
		r^\frac{1}{2}\partial_t G\big)}{L^2(drdV_{\mathbb{S}^2}dt)[0,T]\times [r_0,\infty)}
		\ . \label{trace_est}
\end{equation}
\end{lemma}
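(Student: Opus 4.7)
My plan is to handle the two dyadic Hardy estimates \eqref{index_hardy}--\eqref{improved_hardy} via a single convolution-kernel argument, and then dispatch the trace bound \eqref{trace_est} through a standard time-averaging identity.

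For the Hardy estimates I would begin, at each fixed $\omega\in\mathbb{S}^2$, with the one-dimensional fundamental theorem of calculus,
\[
\psi(x,\omega)\ =\ \psi(\pm 1,\omega)\ +\ \int_{\pm 1}^{x}\partial_y\psi(y,\omega)\,dy\ ,\qquad |x|\geqslant 1\ ,
\]
decompose the $y$-integral into dyadic pieces over $2^i\leqslant|y|\leqslant 2^{i+1}$, and apply Cauchy--Schwarz on each piece to extract a factor $2^{i/2}$. Squaring, integrating in $\omega\in\mathbb{S}^2$ and in $x$ across the annulus $|x|\approx 2^k$, and converting bare derivatives to their weighted cousins via $\lp{\chi_i\partial_y\psi}{L^2}=2^{(s-1)i}\lp{\langle y\rangle^{1-s}\chi_i\partial_y\psi}{L^2}$, I obtain the dyadic bound
\[
\lp{\chi_k\langle x\rangle^{-s}\psi}{L^2}\ \lesssim\ 2^{-(s-\frac12)k}\lp{\chi_0\psi}{L^2}\ +\ \sum_{i=0}^{k}2^{-(s-\frac12)(k-i)}\lp{\langle y\rangle^{1-s}\chi_i\partial_y\psi}{L^2}\ ,
\]
after absorbing the endpoint value $\psi(\pm 1,\omega)$ through a 1D trace inequality on $1/2\leqslant|x|\leqslant 2$ (the resulting derivative contribution is controlled by the RHS). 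Since $s>\frac12$, the triangular kernel $2^{-(s-\frac12)(k-i)}\mathbf{1}_{k\geqslant i}$ has bounded $\ell^1$ norm in each variable, so Young's inequality for discrete convolutions yields \eqref{index_hardy} uniformly in $1\leqslant p\leqslant\infty$. For the sharper \eqref{improved_hardy}, with $k=j$ fixed, I would split the sum at $i'=i$: the head $i'\leqslant i$ is a finite geometric sum whose largest term sits at $i'=i$, producing the factor $2^{-(s-\frac12)(j-i)}$ in front of the $\ell^\infty L^2(|y|\leqslant 2^i)$ norm; the tail $i'>i$ is bounded by the $\ell^\infty L^2(2^i\leqslant|y|\leqslant 2^{j+1})$ norm times the remaining geometric sum, majorized crudely by $2^{(s-\frac12)(j-i)}$ as claimed.

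For \eqref{trace_est} the starting point is the elementary identity
\[
|G(t,x)|^2\ =\ |G(t_0,x)|^2\ +\ 2\int_{t_0}^{t}G\,\partial_\tau G\,d\tau\ ,
\]
valid at each fixed $(r,\omega)$. Averaging $t_0$ over a window $I\subseteq[0,T]$ containing $t$ and applying Cauchy--Schwarz to the error gives
\[
|G(t,x)|^2\ \lesssim\ |I|^{-1}\lp{G(\cdot,x)}{L^2(I)}^2\ +\ \lp{G(\cdot,x)}{L^2(I)}\lp{\partial_\tau G(\cdot,x)}{L^2(I)}\ .
\]
I would then choose the window adaptively in $r$: when $r\leqslant T$, take $|I|\sim r$ centered at $t$ (intersected with $[0,T]$), which after integration in $r\,dV_{\mathbb{S}^2}$ and AM-GM produces the $r^{-1/2}G$ and $r^{1/2}\partial_t G$ contributions on the RHS of \eqref{trace_est}; when $r>T$, take $I=[0,T]$ to pick up the $T^{-1/2}G$ term.

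The main subtlety I anticipate is bookkeeping rather than analysis: carefully tracking in which direction the $\ell^\infty$ is taken on either side of the split index $i$ in \eqref{improved_hardy}, and verifying that the crude $2^{(s-\frac12)(j-i)}$ bound on the tail sum is indeed what is needed in the downstream applications of Sections~\ref{LE_section1}--\ref{LE_section2}. The hypothesis $s>\frac12$ enters in exactly one place, namely to guarantee geometric summability of the kernel, and the $\ell^p$ uniformity for all $1\leqslant p\leqslant\infty$ follows for free from Schur's test.
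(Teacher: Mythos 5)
Your argument is correct, and for the two dyadic Hardy bounds it follows a genuinely different route from the paper. The paper obtains its basic dyadic estimate by integrating the multiplier identity $\partial_x(\chi\psi^2)$ with $\chi'=-2^{-k}\chi_k+\chi_0$, which produces the bilinear term $\lp{\psi\,\partial_x\psi}{L^1}^{1/2}$ (see \eqref{basic_hardy}--\eqref{ik_sum}); the undifferentiated half of that geometric mean must then be absorbed back into the left side via an $\epsilon$-Young argument in $\ell^p$, and in the proof of \eqref{improved_hardy} it is precisely this absorption (an AM--GM split with weights $2^{\pm(s-\frac12)(j-i)}$, reusing \eqref{index_hardy} on the undifferentiated factor) that generates the growing factor $2^{(s-\frac12)(j-i)}$ on the tail. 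Your pointwise fundamental-theorem-of-calculus representation with dyadic Cauchy--Schwarz is linear in $\partial_x\psi$ from the start, so no absorption is needed, the summation is a clean Schur/Young bound for the triangular kernel $2^{(\frac12-s)(k-i)}\mathbf{1}_{i\leqslant k}$ (this is where $s>\frac12$ enters, exactly as in the paper), and your tail estimate in \eqref{improved_hardy} is in fact bounded by an absolute constant, which is stronger than the stated $2^{(s-\frac12)(j-i)}$ loss; so your method is both more elementary and slightly sharper, while the paper's multiplier identity is the kind that generalizes more readily to variable-coefficient settings. Two cosmetic points: take the FTC base point (or the trace interval) inside the annulus $1\leqslant|x|\leqslant 2$, e.g.\ by averaging over it, so that the boundary contribution lands exactly on $\lp{\chi_0\psi}{L^2}$ plus an admissible weighted-derivative term, rather than on $1/2\leqslant|x|\leqslant 2$ which sticks into the region $|x|<1$ not present on the right of \eqref{index_hardy}; and note that the $L^2(|x|<1)$ piece of the $\ell^pL^2$ norm on the left is covered by the same FTC argument integrating inward from that annulus (the paper invokes a Sobolev embedding here). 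For \eqref{trace_est} your proof is essentially the paper's: averaging the base time $t_0$ over a window is equivalent to the paper's pigeonhole choice, and the adaptive window in $r$ is harmless but unnecessary, since the global choice $I=[0,T]$ together with the $r^{\mp\frac12}$ Cauchy--Schwarz split already produces all three terms on the right-hand side.
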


\begin{proof}[Proof of estimates \eqref{index_hardy} and \eqref{improved_hardy}]
Set $\chi$ according to  $\chi'=-2^{-k}\chi_k + \chi_0$ and vanishing at infinity.
Then $\chi\equiv 0$ for $|x|\geqslant 2^{k+1}$ and $|x|\leqslant 1$,
and $|\chi|\leqslant 1$ throughout $\mathbb{R}$.
Integrating the quantity $\partial_x(\chi \psi^2)$ with respect to 
$dxdV_{\mathbb{S}^2}$ or $dxdV_{\mathbb{S}^2}dt$, multiplying the result by $2^{(1-2s)k}$,
and taking the square root we have the uniform bound:
\begin{align}
	\lp{\chi_{k} \langle x\rangle^{-s} \psi}{L^2} \ 
		&\lesssim\ 2^{(\frac{1}{2}-s)k}\lp{\chi_0 \psi }{L^2} + 
		2^{(\frac{1}{2}-s)k}\lp{ \psi \partial_{x}\psi }{ L^1(1\leqslant |x|\leqslant 2^{k+1})}^\frac{1}{2} \ , 
		&s&\in \mathbb{R}  \ , 
		\quad k\geqslant 0 \ . \label{basic_hardy}
\end{align}
For the second term on the RHS above we have:
\begin{equation}
		2^{(\frac{1}{2}-s)k}\lp{ \psi \partial_{x}\psi }{ L^1(1\leqslant |x|\leqslant 2^{k+1})}^\frac{1}{2}
		\ \lesssim \ \sum_{0\leqslant k'\leqslant k} 2^{(\frac{1}{2}-s)(k-k')}
		\lp{\chi_{k'} \la x\ra^{-s} \psi }{ L^2 }^\frac{1}{2}
		\lp{\chi_{k'} \la x\ra^{1-s} \partial_x \psi }{ L^2 }^\frac{1}{2} \ . \label{ik_sum}
\end{equation}

The proof of \eqref{index_hardy} follows at once by summing \eqref{basic_hardy} over
$0\leqslant k \leqslant j$ and using then using \eqref{ik_sum}. To bound the RHS of  
\eqref{ik_sum} summed over $k$ we use Young's convolution inequality 
$2^{(\frac{1}{2}-s)|k-k'|}*\ell^p_{k'}\subseteq \ell^p_k$ for $s>\frac{1}{2}$ and then
$\lp{\sqrt{ab}}{\ell^p}\leqslant \epsilon\lp{a}{\ell^p}+\epsilon^{-1}\lp{b}{\ell^p}$ for sequences
$a=(a_k)$ and $b=(b_k)$. This produces an estimate for 
$\lp{\langle x\rangle^{-s} \psi}{\ell^p L^2 (1\leqslant |x|\leqslant 2^j)}$ with upper bound 
RHS\eqref{index_hardy}. To fill in $|x|\leqslant 1$ one uses a standard Sobolev embedding.

It remains to show \eqref{improved_hardy}. Starting with  \eqref{basic_hardy} for $k=j$ followed
by \eqref{ik_sum}, we break the sum of RHS\eqref{ik_sum}  into $0\leqslant k'\leqslant i$ and
$i\leqslant k'\leqslant j$. Following the same procedure of the previous paragraph and using
\eqref{index_hardy} to estimate the resulting undifferentiated term, the sum in the first 
range is bounded by the first term on RHS\eqref{improved_hardy}. For the    second range 
we use the dyadic weight to sum in $\ell^1$ followed by:
\begin{multline}
		\lp{ \la x\ra^{-s} \psi }{\ell^\infty L^2 (2^i\leqslant |x|\leqslant 2^{j+1})}^\frac{1}{2}
		\lp{ \la x\ra^{1-s} \partial_x \psi }{\ell^\infty L^2 
		(2^i\leqslant |x|\leqslant 2^{j+1})}^\frac{1}{2} \\ \leqslant  \ 
		2^{(\frac{1}{2}-s)(j-i)}
		\lp{ \la x\ra^{-s} \psi }{\ell^\infty L^2 (2^i\leqslant |x|\leqslant 2^{j+1})} 
		+ 2^{(s-\frac{1}{2})(j-i)}
		\lp{ \la x\ra^{1-s} \partial_x \psi }{\ell^\infty L^2 
		(2^i\leqslant |x|\leqslant 2^{j+1})} \ .
\end{multline}
For the first term on the RHS above we again use \eqref{index_hardy}.

\end{proof}

\begin{proof}[Proof  of estimate \eqref{trace_est}]
By the pigeon-hole principle there exists $t_0\in [0,T]$ such that:
\begin{equation}
		\lp{G(t_0) }{L^2(drdV_{\mathbb{S}^2})[r_0,\infty)} \ \leqslant \ 
		T^{-\frac{1}{2}} \lp{G}{L^2(drdV_{\mathbb{S}^2}dt)[0,T]\times [r_0,\infty)} \ . \notag
\end{equation}
Then using the fundamental theorem of calculus and integrating on $[t_0,t]$ we have:
\begin{equation}
		\lp{G(t) }{L^2(drdV_{\mathbb{S}^2})[r_0,\infty)}^2 \ \leqslant \ 
		T^{-1} \lp{G}{L^2(drdV_{\mathbb{S}^2}dt)[0,T]\times [r_0,\infty)}^2
		+ 2\Big| \int_{t_0}^t \int_{[r_0,\infty)\times\mathbb{S}^2} 
		G\partial_t G\,  drdV_{\mathbb{S}^2}dt\Big|
		\ . \notag
\end{equation}
Estimate \eqref{trace_est} follows immediately from Young's inequality applied to the last line.
\end{proof}


\begin{thebibliography}{99}

\bibitem{AB_Max} Andersson, Lars; Blue, Pieter \emph{Uniform energy bound and asymptotics 
for the Maxwell field on a slowly rotating Kerr black hole exterior.} preprint \texttt{arXiv:1310.2664}


\bibitem{Art1}  Aretakis, Stefanos \emph{Stability and instability of extreme 
Reissner-Nordstr\"om black hole spacetimes for linear scalar perturbations I.} 
Comm. Math. Phys. \textbf{307} (2011), no. 1, 17-63.

\bibitem{Art2} Aretakis, Stefanos \emph{Stability and instability of extreme 
Reissner-Nordstršm black hole spacetimes for linear scalar perturbations II.} 
Ann. Henri Poincar\'e \textbf{12} (2011), no. 8, 1491-1538.



\bibitem{B_Maxwell}  Blue, Pieter \emph{Decay of the Maxwell field on the 
Schwarzschild manifold.} J. Hyperbolic Differ. Equ. \textbf{5} (2008), no. 4, 807-856.

\bibitem{BS_uniform} Blue, Pieter; Sterbenz, Jacob \emph{Uniform decay of local energy and the 
semi-linear wave equation on Schwarzschild space.} Comm. Math. Phys. \textbf{268} 
(2006), no. 2, 481-504.

\bibitem{CK_Fields} Christodoulou, D.; Klainerman, S. \emph{Asymptotic
  properties of linear field equations in Minkowski space.}
Comm. Pure Appl. Math.  \textbf{43}  (1990),  no. 2, 137--199.

\bibitem{DR_sch}  Dafermos, Mihalis; Rodnianski, Igor \emph{The red-shift effect and 
radiation decay on black hole spacetimes.} Comm. Pure Appl. Math. \textbf{62} 
(2009), no. 7, 859-919. 

\bibitem{DR_L00}  Dafermos, Mihalis; Rodnianski, Igor \emph{A proof of the uniform boundedness 
of solutions to the wave equation on slowly rotating Kerr backgrounds.} 
Invent. Math. \textbf{185} (2011), no. 3, 467-559.

\bibitem{DR_uniform} Dafermos, Mihalis; Rodnianski, Igor
\emph{Decay for solutions of the wave equation on Kerr exterior spacetimes I-II: The cases $|a|\ll M$ 
or axisymmetry.} preprint \texttt{arXiv:1010.5132}

\bibitem{DR_survey} Dafermos, Mihalis; Rodnianski, Igor \emph{Lectures on black 
holes and linear waves.} \texttt{ arXiv:0811.0354}

\bibitem{HE} Hawking, S. W.; Ellis, G. F. R.
\textbf{The large scale structure of space-time.} 
Cambridge Monographs on Mathematical Physics, 
No. 1. Cambridge University Press, 1973. xi+391 pp.

\bibitem{H_Ein} Holzegel, Gustav
\emph{Ultimately Schwarzschildean Spacetimes and the Black Hole Stability Problem.} 
preprint \texttt{arXiv:1010.3216v1}

\bibitem{K_survey} Klainerman, Sergiu \emph{Mathematical challenges of 
general relativity.} Rend. Mat. Appl. (7) \textbf{27} (2007), no. 2, 105-122.

\bibitem{Luk_unifrom_sch}  Luk, Jonathan \emph{Improved decay for solutions 
to the linear wave equation on a Schwarzschild black hole.} 
Ann. Henri Poincar\'e \textbf{11} (2010), no. 5, 805-880.

\bibitem{Luk_uniform_kerr} Luk, Jonathan \emph{A vector field method approach to 
improved decay for solutions to the wave equation on a slowly rotating Kerr black hole.}
Analysis \& PDE \textbf{5}, no. 3 553-625.

\bibitem{MMTT_sch} Marzuola, Jeremy; Metcalfe, Jason; Tataru, Daniel; Tohaneanu, Mihai 
\emph{Strichartz estimates on Schwarzschild black hole backgrounds.} 
Comm. Math. Phys. \textbf{293} (2010), no. 1, 37-83.

\bibitem{MTT_timedep}   Metcalfe, Jason; Tataru, Daniel; Tohaneanu, Mihai 
\emph{Price's law on nonstationary space-times.} Adv. Math. \textbf{230} (2012), no. 3, 995-1028.

\bibitem{MTT_Max} Metcalfe, Jason; Tataru, Daniel; Tohaneanu, Mihai 
\emph{Uniform decay for Maxwell fields on black-hole backgrounds.} in preparation. 

\bibitem{Stein}  Stein, Elias M. \textbf{Harmonic analysis: real-variable methods, orthogonality, 
and oscillatory integrals.}  Princeton Mathematical Series, 43. 
Princeton University Press, 1993. xiv+695 pp.

\bibitem{TT_LE}  Tataru, Daniel; Tohaneanu, Mihai \emph{A local energy estimate on 
Kerr black hole backgrounds.} Int. Math. Res. Not. IMRN 2011, no. 2, 248-292.

\bibitem{T_uniform} Tataru, Daniel \emph{Local decay of waves on asymptotically 
flat stationary space-times.} to appear in AJM.  

\bibitem{Teuk} Teukolsky, Saul A.  
\emph{Rotating Black Holes: Separable Wave Equations for Gravitational 
and Electromagnetic Perturbations.} Phys. Rev. Lett. 29, 1114-1118 (1972)

\bibitem{Y_decay} Yang, Shiwu \emph{Global solutions to nonlinear wave equations in 
time dependent inhomogeneous media.} preprint,  \texttt{arXiv:1010.434}



\end{thebibliography}
\end{document}